\newcommand{\R}{\mathbb{R}}
\newcommand{\Z}{\mathbb{Z}}
\newcommand{\Cubn}{C_{u,b}(\mathbb{R}^n)}
\newcommand{\Con}{C_0(\mathbb{R}^n)}
\newcommand{\Co}{C_0(\mathbb{R})}
\newcommand{\eps}{\varepsilon}
\def\Xint#1{\mathchoice
{\XXint\displaystyle\textstyle{#1}}
{\XXint\textstyle\scriptstyle{#1}}
{\XXint\scriptstyle\scriptscriptstyle{#1}}
{\XXint\scriptscriptstyle\scriptscriptstyle{#1}}
\!\int}
\def\XXint#1#2#3{{\setbox0=\hbox{$#1{#2#3}{\int}$ }
\vcenter{\hbox{$#2#3$ }}\kern-.6\wd0}}
\def\dashint{\Xint-}
\def\di{\displaystyle}
\newtheorem{theorem}{Theorem}[section]
\newtheorem{lemma}[theorem]{Lemma}
\newtheorem{corollary}[theorem]{Corollary}
\newtheorem{proposition}[theorem]{Proposition}
\newtheorem{remark}[theorem]{Remark}
\numberwithin{equation}{section}
\begin{document}

\title[Fisher-KPP equations with fractional diffusion]
{The influence of fractional diffusion in Fisher-KPP equations}

\thanks{The first author was supported by grants MTM2008-06349-C03-01, 
MTM2011-27739-C04-01 (Spain) and 2009SGR345 (Catalunya). 
The second author is supported by the ANR grant PREFERED}

\author{Xavier Cabr{\'e}}
\address{ICREA and Universitat Polit{\`e}cnica de Catalunya,
Departament de Matem\`atica Aplicada I, Diagonal 647, 08028
Barcelona, Spain}
\email{xavier.cabre@upc.edu}

\author{Jean-Michel Roquejoffre}
\address{Institut de Math\'ematiques, Universit\'e de Toulouse et CNRS (UMR 
5219),
118 route de Narbonne, 31062 Toulouse, France}
\email{roque@mip.ups-tlse.fr}

\begin{abstract}
We study the Fisher-KPP equation where the Laplacian is replaced by the
generator of a Feller semigroup with power decaying kernel, an important 
example being the fractional Laplacian. 
In contrast with the case of the standard Laplacian
where the stable state invades the unstable one at constant speed, we prove that with
fractional diffusion, generated for instance by a stable L\'evy process,
the front position is exponential in time. 
Our results provide a mathematically rigorous justification of numerous 
heuristics about this model.
\end{abstract}

\maketitle

\section{Introduction}

Let $f$ be a function satisfying
\begin{equation}\label{nonl}
f\in C^1([0,1])\text{ is concave,} \ \ f(0)=f(1)=0,\ \text{ and } f'(1)<0<f'(0).
\end{equation}
We may take for instance $f(u)=u(1-u)$. We are interested in  
the large time behavior of solutions $u=u(t,x)$ to the Cauchy problem
\begin{equation} \label{N}
\left \{ \begin{array}{rcll}
u_t + A u & = & f(u) &\ \ \ \ \text{in} \ 
(0,+\infty)\times \mathbb{R}^n , \\
u(0,\cdot)& = & u_0  &\ \ \ \ \text{in} \ \mathbb{R}^n  , 
\qquad 0\leq u_0 \leq 1,
\end{array}\right.
\end{equation}
where $A$ is the infinitesimal generator of a Feller 
semigroup. Important examples are $A=-\Delta$ (the classical Laplacian) and 
$A=(-\Delta)^\alpha$ with
$\alpha\in (0,1)$ (the fractional Laplacian). 
Given $\lambda\in (0,1)$, we want to describe how the level sets 
$\{x\in \mathbb{R}^n \, :\, u(t,x)=\lambda\}$
spread as time goes to $+\infty$.

When $A=-\Delta$ is the standard Laplacian, the equation becomes
\begin{equation}
\label{e2.1}
u_t-\Delta u=f(u)\ \ \ \ \text{in} \ 
(0,+\infty)\times \mathbb{R}^n
\end{equation}
and the following result of Aronson and Weinberger \cite{AW} describes
the evolution of compactly supported data.

\begin{theorem}[\cite{AW}]\label{AW78}
Let $u$ be a solution of \eqref{e2.1} with $u(0,\cdot)\not\equiv 0$ compactly supported 
in~$\R^n$ and satisfying $0\leq u(0,\cdot)\leq 1$. 
Let $c_\ast=2\sqrt{f'(0)}$.  Then,
\begin{enumerate}
\item[a)]  if $c>c_\ast$, then $u(t,x)\to 0$ uniformly in $\{|x|\geq ct\}$ as $t\to+\infty$.
\item[b)] if $c<c_\ast$, then $u(t,x)\to 1$ uniformly in $\{|x|\leq ct\}$ as $t\to+\infty$.
\end{enumerate}
\end{theorem}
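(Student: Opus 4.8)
The plan is to prove both parts by parabolic comparison, with sub- and supersolutions drawn from the linearisations of $f$ at the two equilibria. Note first that $0\le u\le1$ for all $t$, by comparison with the constant (sub/super)solutions $0$ and $1$. \emph{Part (a)} is the soft half: concavity and $f(0)=0$ give the tangent-line bound $f(u)\le f'(0)u$ on $[0,1]$, so $u_t-\Delta u\le f'(0)u$, and the comparison principle together with the heat semigroup yields
\[
u(t,x)\ \le\ e^{f'(0)t}\,(\Gamma_t * u_0)(x),\qquad \Gamma_t(x)=(4\pi t)^{-n/2}e^{-|x|^2/(4t)}.
\]
Since $u_0$ is supported in some ball $B_{R_0}$ with $0\le u_0\le1$, bounding the convolution by a single Gaussian gives, for $|x|\ge ct$ and $t>R_0/c$,
\[
u(t,x)\ \le\ C\,t^{-n/2}\exp\!\Big(\big(f'(0)-\tfrac{c^{2}}{4}\big)t+\tfrac{cR_0}{2}\Big);
\]
when $c>c_\ast=2\sqrt{f'(0)}$ the exponent tends to $-\infty$, so $u\to0$ uniformly on $\{|x|\ge ct\}$, which is part (a).

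For \emph{part (b)} I would argue in three steps, beginning with instantaneous positivity and saturation on compact sets. As $f\ge0$ on $[0,1]$ (concavity plus $f(0)=f(1)=0$), $u$ is a supersolution of the heat equation, hence $u(t,\cdot)>0$ for every $t>0$ by the strong maximum principle. Choosing $R$ large enough that the first Dirichlet eigenvalue $\lambda_1(B_R)$ of $-\Delta$ on $B_R$ is below $f'(0)$, the problem $-\Delta w=f(w)$ in $B_R$ with $w=0$ on $\partial B_R$ has a unique solution $w_R$ with $0<w_R<1$, trapped between the subsolution $\varepsilon\varphi_1$ and the supersolution $1$. Comparing $u$ --- which at $t=1$ is bounded below by a positive constant on $\overline{B_R}$ and is $\ge0$ on $\partial B_R$ --- with the $B_R$-Dirichlet evolution issued from $\min(\delta,w_R)$, which increases to $w_R$, gives $\liminf_{t\to\infty}u\ge w_R$ on $B_R$; letting $R\to\infty$ and using $w_R\to1$ locally uniformly, together with $u\le1$, yields $u(t,\cdot)\to1$ locally uniformly.

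The second step is a moving subsolution of arbitrary speed below $c_\ast$. Fix $\tilde c<c_\ast$ and choose $R$ with $\lambda_1(B_R)+\tilde c^{2}/4<f'(0)$, which is possible exactly because $\tilde c<c_\ast$. Passing to the moving frame $r=|x|-\tilde c t$ produces a drift $\tilde c\,\partial_r$ which disappears under conjugation by the weight $e^{-(\tilde c/2)r}$, after which the linearised equation reads $-w''=(f'(0)-\tilde c^{2}/4)\,w$, whose principal mode on an interval of half-length comparable to $R$ is admissible by the inequality just chosen. Out of this I would assemble a radial profile $W(r)$, equal to a small fixed constant $m>0$ for $r\le r_1$, behaving like $m\,e^{-(\tilde c/2)(r-r_1)}\cos\!\big(\tfrac{\pi(r-r_1)}{2(\rho-r_1)}\big)$ on most of a bounded transition layer $[r_1,\rho]$, and reaching $0$ at $r=\rho$ with a quadratic touchdown there (which absorbs the curvature term $\frac{n-1}{|x|}\partial_r$, active only where $W'\ne0$ and with $r$ large); for $m$ small and $r_1\ge R_0$ large, $\underline u(t,x):=W(|x|-\tilde c(t-t_0))$ is then a genuine subsolution of \eqref{e2.1} for $t\ge t_0$, the front corner having the sign favourable to subsolutions. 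The third step is the bootstrap: by the first step $u$ eventually dominates such a $\underline u$ launched inside a large ball, so $u\ge m$ on a ball whose radius grows at speed $\tilde c$; since $m$ is a \emph{fixed} positive constant, a further comparison with the $B_R$-Dirichlet problem of the first step upgrades ``$u\ge m$'' to ``$u\ge1-\delta$'' in a bounded time, losing only a bounded radius. Iterating this advance-and-saturate cycle and letting its length tend to infinity, the region $\{u\ge1-\delta\}$ expands at an effective speed arbitrarily close to $\tilde c$; since $\tilde c<c_\ast$ was arbitrary, $u\ge1-\delta$ on $\{|x|\le ct\}$ for all large $t$ whenever $c<c_\ast$, which with $u\le1$ proves (b).

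The only genuinely delicate point is the moving subsolution of the second step: manufacturing an honest global subsolution on $\mathbb{R}^n$ that propagates at a speed arbitrarily close to $c_\ast$. The exponential conjugation is exactly what turns the eigenvalue constraint into the sharp threshold $\tilde c<c_\ast$, but one must check the differential inequality --- and the sign of the front corner --- across the plateau, the oscillatory layer and the touchdown, and keep the radial curvature term under control throughout; by contrast the rescaling that pushes the effective speed up to $c_\ast$, and all of part (a), are soft.
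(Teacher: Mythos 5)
The paper does not prove Theorem~\ref{AW78}: it is quoted verbatim from Aronson--Weinberger \cite{AW}, so there is no internal proof to compare against. Your argument is essentially the classical one from that paper (and its modern retellings): part~(a) by the tangent-line bound $f(u)\le f'(0)u$ and a Gaussian supersolution, part~(b) by the hair-trigger effect on large balls plus an expanding radial subsolution built from the conjugated linearisation at $u=0$, with the threshold $c_\ast=2\sqrt{f'(0)}$ emerging exactly from the requirement $f'(0)-\tilde c^{2}/4>0$. The architecture is correct and I see no gap in the logic; the points that genuinely require verification are the ones you flag yourself, namely (i) the Liouville-type step $w_R\to 1$ locally uniformly as $R\to\infty$ (standard for concave $f$ with $f>0$ on $(0,1)$, via monotonicity of $w_R$ in $R$ and evaluating the equation at the infimum of the limit), (ii) the fact that $\min(\delta,w_R)$ and the glued profile $W$ are admissible generalized sub\-solutions (minima and maxima of subsolutions, plus $C^1$ matching at the quadratic touchdown so the distributional Laplacian has no adverse singular part), and (iii) the absorption of the curvature term $\frac{n-1}{|x|}\partial_r$, which has the unfavourable sign on the decreasing part of $W$ and must be beaten by launching the front at large radius and keeping a strict margin in the eigenvalue inequality. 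With those details written out the proof is complete.
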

\noindent
In addition, \eqref{e2.1} admits planar traveling wave solutions connecting 0 and 1, that is,
solutions of the form $u(t,x)=\phi(x\cdot e+ct)$ with
\begin{equation}\label{trav-wave-1}
-\phi''+c\phi'=f(\phi)\quad \text{in } \R, \ \ \ \phi(-\infty)=0,\ \phi(+\infty)=1. 
\end{equation}
The constant $c_{\ast}$ in Theorem~\ref{AW78} is the smallest possible speed $c$ in
\eqref{trav-wave-1} for a planar traveling wave to exist. In addition,
Komogorov, Petrovskii, and Piskunov~\cite{KPP} showed that the solution of
\eqref{e2.1} for $n=1$ and with initial datum the Heaviside function 
$H(x)=\chi_{(0,\infty)}(x)$ converges as $t\to +\infty$ to a traveling wave
with speed $c=c_{\ast}$.

Our results, already announced in \cite{CR}, 
show that this situation changes drastically as soon as the Laplacian is replaced
for instance by the fractional Laplacian $(-\Delta)^\alpha$ with $\alpha\in (0,1)$. The equation
then becomes
\begin{equation}\label{eq-frac-lapl}
u_t+(-\Delta)^\alpha u=f(u)\ \ \ \ \text{in} \ 
(0,+\infty)\times \mathbb{R}^n.
\end{equation}
Solutions for the standard heat equation correspond to expected values for
particles moving under a Brownian process. Instead, for $\alpha\in (0,1)$,
the fractional Laplacian is the generator for a stable L\'evy process
---a jump process. It is reasonable to expect
that the existence of jumps (or flights) in the diffusion process will accelerate
the invasion of the unstable state $u=0$ by the stable one, $u=1$.
This has been sustained in the literature, see \cite{MVV,Cas,Cas3} among others,  
through the linearization of the equation at the leading edge of the front, as well as 
through numerical simulations. These heuristics predict that the front position will be
exponential in time ---in contrast with the classical case where it is linear in time
by Theorem~\ref{AW78}. 
The purpose of our work is to provide a rigorous mathematical justification of this fact, and to give an accurate
localisation of the level sets of $u$ in the particular case $\alpha=1/2$ and $f(u)=u-u^2$. In particular, the leading edge analysis is 
not accurate enough.

Reaction equations with fractional diffusion appear in
physical models ---for instance of turbulence, plasmas, and flames--- when
the diffusive phenomena are not properly described by Gaussian (that is, Brownian) processes.
See for example \cite{MVV} for a description of some of these models. 
Equation \eqref{eq-frac-lapl} also appears 
in population dynamics, where it can be obtained in a certain space-time regime
as the asymptotic of an integro-differential model; see \cite{BRR}.
The classical heat equation \eqref{e2.1} can be obtained from the same asymptotic model in a
different space-time regime; see \cite{KPP}.

We consider a larger class of operators than fractional Laplacians.
We are given a continuous function $p=p(t,x)$, with $t>0$ and $x\in\R^n$, such that
\begin{eqnarray}
\label{P1}
&& \hspace{-1.1cm} \bullet\quad  
0<p\in C((0,+\infty)\times\R^n)
\text{ and } \int_{\R^n} p(t,x)\,dx=1 \text{ for all } t>0.\\
\label{P2}
&&  \hspace{-1.1cm} \bullet\quad  
p(t,\cdot )\ast p(s,\cdot )=p(t+s,\cdot) \text{ for all } (s,t)\in(0,\infty)^2.\\
&&  \hspace{-1.1cm} \bullet\quad  
\text{There exist $\alpha\in (0,1)$ and $B>1$ such that, for $t>0$ and } x\in\R^n,\nonumber \\ 
\label{P3}
&&\qquad
\frac{B^{-1}}{t^{\frac{n}{2\alpha}}(1+\vert t^{-{\frac1{2\alpha}}}x\vert^{n+2\alpha})}\leq p(t,x)\leq
\frac{B}{t^{\frac{n}{2\alpha}}(1+\vert t^{-{\frac1{2\alpha}}}x\vert^{n+2\alpha})}.
\end{eqnarray}
We assume no further regularity on $p$ than continuity.
Given a function $u_0\in L^\infty(\R^n)$ and $t>0$, we define
\begin{equation*}
T_tu_0(x):=\left(p(t,\cdot )\ast  u_0\right)(x)=
\int_{\mathbb{R}^n}p
(t,y)u_0(x-y)\, dy.
\end{equation*}
Clearly, the family $T_t$ of bounded linear contractions of $L^\infty(\R^n)$ is a semigroup.
When considered in the Banach space $C_{u,b}(\R^n)$ of uniformly continuous and
bounded functions in $\R^n$, the semigroup is a strongly continuous semigroup
(also called a $C_0$ semigroup) 
and therefore admits an infinitesimal generator
$-A$, defined by  
\begin{equation*}
-Au= \displaystyle\lim_{t\downarrow 0}{\frac{T_tu-u}{t}} 
\end{equation*}
for those $u\in C_{u,b}(\R^n)$ for which the limit exists in the uniform convergence norm. 
The subspace of such functions is called the domain of $A$ and denoted by $D(A)$.
Since the semigroup is strongly continuous, it is well known that  $D(A)$
is a dense subspace of $C_{u,b}(\R^n)$.

Given $u_0\in L^\infty(\R^n)$ the function $u=u(t,x):= T_tu_0 (x)$ is the mild solution 
(see section~2) of the evolution problem
\begin{equation*}
\left \{ \begin{array}{rcll}
u_t + A u & = & 0 &\ \ \ \ \text{in} \ 
(0,+\infty)\times \mathbb{R}^n , \\
u(0,\cdot)& = & u_0  &\ \ \ \ \text{in} \ \mathbb{R}^n . 
\end{array}\right.
\end{equation*}
The function $p$ is called the kernel of the semigroup; it is also called 
the transition probability function.
The operator $A$ is said to be the infinitesimal generator of a Feller semigroup
---since $0\leq u_0 \leq 1$ leads to $0\leq T_tu_0\leq 1$. This property
will lead to a maximum principle for $A$.

The power decay assumption in \eqref{P3} will be crucial for the results of this paper.
The assumption in \eqref{P3} concerning the dependence of the bound on $t^{-\frac{1}{2\alpha}}x$
is related with the self-similarity or 
scale invariance of the underlying Markov process ---an hypothesis often
called ``stability''. Indeed, if one assumes that 
$$
p(t,x)= a(t)^{-n}p(1,a(t)^{-1}x)
$$
for some function $a=a(t)$ and for all $t>0$,
then there exists a constant $\alpha\in (0,1]$ such that 
$a(t)=t^{\frac{1}{2\alpha}}$ ---as in \eqref{P3}; see \cite{Lamp}.

When $A=(-\Delta)^\alpha$ is the fractional Laplacian and $p=p_\alpha$, defined for  
$0<\alpha<1$ as follows, all assumptions \eqref{P1}, \eqref{P2}, and \eqref{P3} are
satisfied. 
If $u\in C^2(\R^n)$ has sufficiently slow growth at infinity ---for instance
$|u(x)|\le C(1+|x|^\gamma)$ 
with $\gamma<2\alpha$--- then
$$
(-\Delta)^\alpha u(x)=C_{n,\alpha}\, P.V.\, \int_{\R^n}\frac{u(x)-u(y)}{\vert x-y\vert^{n+2\alpha}}\, 
dy,
$$
where $P.V.$ stands for principal value and the
constant $C_{n,\alpha}$ is adjusted for the symbol of $(-\Delta)^\alpha$ to be
$\vert\xi\vert^{2\alpha}$. 
Its transition probability function $p$ satisfies
\begin{equation*}
\left \{ \begin{array}{l}
p(t,x) = p_{\alpha}(t,x)=t^{-\frac{n}{2\alpha}}p_{\alpha}
(1,t^{-\frac{1}{2\alpha}}x),  \vspace{2mm}\\
\lim_{|y| \to \infty}{|y|^{n+2\alpha}p_{\alpha}(1,y)}= c_{n,\alpha}
\end{array}\right.
\end{equation*}
for some positive constant $c_{n,\alpha}$, and thus condition \eqref{P3} is satisfied; 
see for instance \cite{Kolo}. We have that 
${p_{\alpha} (t,\cdot)} = {\mathcal F}^{-1}(e^{-t|\xi|^{2\alpha}})$, 
where ${\mathcal F}^{-1}$
denotes inverse Fourier transform.
For $\alpha=1/2$, $p_{1/2}$ admits the explicit expression
$$
p_{1/2}(t,x)=B_n\frac{t}{(t^{2}+\vert x\vert^2)^{(n+1)/2}}
=\frac{B_n}{t^n(1+\vert t^{-1}x\vert^2)^{(n+1)/2}},
$$
where $B_n=\Gamma(\frac{n+1}{2})\pi^{-\frac{n+1}{2}}$ is chosen to ensure property \eqref{P1} above.

More examples of semigroups as above are available in Bony-Courr\`ege-Priouret~\cite{BCP}.
This paper, among many other things, 
characterizes the integral operators satisfying a maximum principle; see Remark~\ref{levyformula}
below.

Our first result concerns a class of initial data in $\R^n$, possibly
discontinuous, which includes compactly supported 
functions. We show that the position of all level sets moves exponentially fast in time. 

\begin{theorem}\label{LIMR}
Let $n\geq 1$, $\alpha\in(0,1)$, $f$ satisfy \eqref{nonl},
and $p$ be a kernel satisfying \eqref{P1}-\eqref{P2}-\eqref{P3}. 

Let $\sigma_\ast=\frac{f'(0)}{n+2\alpha}$.
Let $u$ be a solution of \eqref{N},
where $u_0\not \equiv 0$, $0\leq u_0 \leq 1$ is measurable, and
$$
u_0(x)\leq C|x|^{-n-2\alpha} \ \ \ \text{for all } x\in\mathbb{R}^n
$$
and for some constant $C$.  Then,
\begin{enumerate}
    \item[a)] if $\sigma> \sigma_\ast$, then $u(t,x)\to 0$
uniformly in $\left\{|x|\geq e^{\sigma t}\right\}$ as $t\to +\infty$.
    \item[b)] if $\sigma<\sigma_\ast$, then $u(t,x)\to 1$ 
uniformly in $\left\{|x|\leq e^{\sigma t}\right\}$ as $t\to +\infty$.
\end{enumerate}
\end{theorem}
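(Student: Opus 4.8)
The plan is to establish the two statements by constructing explicit super- and subsolutions that one compares with $u$ via the maximum principle for $A$ (which follows from the Feller property, as noted in the text). The key observation is that the kernel bound \eqref{P3} forces the linear semigroup to spread algebraically-heavy tails: if $u_0$ decays like $|x|^{-n-2\alpha}$, then $T_tu_0(x)$ behaves, for $|x|$ large, like a constant times $t\,|x|^{-n-2\alpha}$ (one sees this by splitting the convolution integral into the region $|y|\le |x|/2$, where $p(t,y)$ is integrable against the heavy tail, and $|y|\ge|x|/2$, where one uses $\int p(t,y)\,dy=1$ and the decay of $u_0$). Thus, roughly, the linear flow sends the profile $|x|^{-n-2\alpha}$ to $(1+t)|x|^{-n-2\alpha}$ up to constants; iterating (or using the semigroup property) the linear equation alone would produce $e^{f'(0)t}|x|^{-n-2\alpha}$, and the level set where this equals a fixed height $\lambda$ sits at $|x|\sim e^{f'(0)t/(n+2\alpha)} = e^{\sigma_\ast t}$. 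The nonlinearity $f$ only helps the invasion, since $f(u)\ge f'(0)u - K u^2 \ge$ (something) because $f$ is concave with $f(0)=0$.

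For part a) (the upper bound, i.e.\ spreading is no faster than $e^{\sigma_\ast t}$), I would use that $f$ concave with $f(0)=0$ gives $f(u)\le f'(0)u$, so $u$ is a subsolution of the \emph{linear} equation $v_t + Av = f'(0) v$; hence $u(t,x)\le e^{f'(0)t}T_tu_0(x)$. Then I estimate $T_tu_0(x)\le C(1+t)\,|x|^{-n-2\alpha}$ for $|x|$ large, using \eqref{P3} and the hypothesis $u_0(x)\le C|x|^{-n-2\alpha}$ together with $0\le u_0\le 1$ (the $\le 1$ bound handles the region near the origin where $|x|^{-n-2\alpha}$ blows up). Combining, $u(t,x)\le C(1+t)e^{f'(0)t}|x|^{-n-2\alpha}$, and on $\{|x|\ge e^{\sigma t}\}$ with $\sigma>\sigma_\ast$ this is bounded by $C(1+t)e^{f'(0)t}e^{-(n+2\alpha)\sigma t} = C(1+t)e^{((n+2\alpha)(\sigma_\ast-\sigma))t}\to 0$. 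That settles a).

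For part b) (the lower bound, invasion up to $e^{\sigma t}$ for $\sigma<\sigma_\ast$) I would build a subsolution. First, a standard argument shows that for any $\delta>0$ small the solution with $0\le u_0\not\equiv0$ satisfies $\liminf_{t\to\infty}\inf_{|x|\le R}u(t,x)\ge 1-\delta$ for every fixed $R$ — in fact $u$ becomes bounded below on compacts because the ODE $\dot{s}=f(s)$ pushes mass toward $1$ and the kernel has full mass $1$; so after some time $t_0$ we may assume $u(t_0,\cdot)\ge \eta\chi_{B_R}$ with $\eta>0$. Then I look for a subsolution of the form $\underline{u}(t,x)= \min\{1-\delta,\ a(t)\,(1+|x|^{n+2\alpha})^{-1}\}$ (or a smoothed version), where on the set where the $\min$ is the second term one needs $a'(t)(1+|x|^{n+2\alpha})^{-1} + A\underline u \le f(\underline u)$. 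The essential computation is that $A$ applied to the heavy-tailed profile $(1+|x|^{n+2\alpha})^{-1}$ is again controlled by (a constant times) that same profile — this is where \eqref{P3} enters decisively, via $-A w = \lim_{t\downarrow0}(T_tw-w)/t$ and the tail-reproduction estimate above applied to $w(x)=(1+|x|^{n+2\alpha})^{-1}$. Using $f(u)\ge (f'(0)-\epsilon)u$ for $u$ small, the differential inequality reduces to $a'(t)\le (f'(0)-\epsilon - C)a(t)$ in the tail region; choosing constants so that the relevant exponential rate exceeds $(n+2\alpha)\sigma$ yields $a(t)\ge c\,e^{(n+2\alpha)\sigma t}$, hence $\underline u(t,x)=1-\delta$ on $\{|x|\le e^{\sigma t}\}$ for $t$ large. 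Comparison gives $u\ge 1-\delta$ there, and since $\delta$ is arbitrary, part b) follows.

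The main obstacle is the subsolution construction in part b): one must control $A$ acting on the fat-tailed barrier $(1+|x|^{n+2\alpha})^{-1}$ using \emph{only} the two-sided bound \eqref{P3} on the kernel — we have no pointwise formula for $A$ — and one must do so uniformly enough to close the differential inequality across the transition region where the $\min$ switches from the constant $1-\delta$ to the algebraic tail (the barrier is only Lipschitz there, so the comparison principle must be applied in the viscosity/mild sense, or the corner must be smoothed). Getting the constant in front right, so that the exponential growth rate of $a(t)$ can be made to exceed $(n+2\alpha)\sigma$ for \emph{every} $\sigma<\sigma_\ast$, is the delicate quantitative point; everything else is comparison-principle bookkeeping plus the elementary tail estimate for $T_t$ applied to heavy-tailed data.
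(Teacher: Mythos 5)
Your part a) is correct and coincides with the paper's argument: the supersolution $e^{f'(0)t}T_tu_0$ together with the tail estimate $T_tu_0(x)\leq C(1+t)|x|^{-n-2\alpha}$ (which is exactly the upper bound in part a) of Lemma~\ref{HR}, proved by the same splitting of the convolution you describe) settles that half.

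Part b), however, has a genuine gap, and it sits precisely at the point you flag as ``the main obstacle''. Your barrier $\underline{u}=\min\{1-\delta,\,a(t)(1+|x|^{n+2\alpha})^{-1}\}$ requires a pointwise bound of the form $|Aw|\leq Cw$ for the fat-tailed profile $w$, but under the hypotheses of the theorem $A$ is \emph{only} defined as the generator of the semigroup, and \eqref{P3} is a two-sided bound on the kernel, not a regularity or symmetry assumption. There is no reason for $w$ to belong to $D(A)$: the local part of the increment $\frac1t\int p(t,y)\{w(x-y)-w(x)\}\,dy$ cannot be controlled by a first-order Taylor expansion when $\alpha\leq 1/2$ (the kernel has no first moment), and without symmetry of $p$ there is no second-order cancellation to invoke. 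Even granting a bound $|Aw|\leq C_0w$ in the tail, the constant $C_0$ is of order one (it depends on the mass of $w$ in the bulk), so your differential inequality $a'\leq(f'(0)-\epsilon-C_0)a$ does not produce a growth rate approaching $f'(0)$, which is what is needed to reach \emph{every} $\sigma<\sigma_\ast$. A further unsupported step is the opening claim that $\liminf_{t\to\infty}\inf_{|x|\leq R}u\geq 1-\delta$ is ``standard'': since $A$ leaks mass out of every compact set, the ODE comparison does not apply on $\{|x|\leq R\}$, and this local convergence to $1$ is in fact a substantial part of what must be proved.

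The paper avoids both difficulties by never applying $A$ to a barrier in the lower-bound step. Instead (Lemma~\ref{EXPR}) it takes the \emph{exact} mild solution $w=e^{(f(\delta)/\delta)t}T_tv_0$ of the linearized equation, which is a subsolution as long as $w\leq\delta$; the rate $f(\delta)/\delta$ can be taken arbitrarily close to $f'(0)$, and the only input from \eqref{P3} is the \emph{lower} bound on $p$ needed to show that the level set $\{T_{t_0}v_0=\varepsilon e^{-(f(\delta)/\delta)t_0}\}$ has moved out by a factor $e^{\sigma t_0}$ (Lemma~\ref{HR}). Truncating at level $\varepsilon$ and iterating restores the profile and yields $u\geq\varepsilon$ on $\{|x|\leq be^{\sigma t}\}$. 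The convergence to $1$ is then a separate argument (Lemma~\ref{CONV1}): using $f(s)\geq\frac{f(\varepsilon')}{1-\varepsilon'}(1-s)$ on $[\varepsilon,1]$, one dominates $1-u$ by a supersolution built from $|x|^\gamma$ with $\gamma<2\alpha$, for which the semigroup bounds of Lemma~\ref{HR}~b) are available. If you want to salvage your barrier approach, you would at minimum have to restrict to operators for which $Aw$ makes pointwise sense on such profiles (e.g.\ the fractional Laplacian with $\alpha>1/2$, or symmetric kernels), and you would still need a separate mechanism to push the solution from $\varepsilon$ up to $1$ behind the front.
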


Part b) on convergence towards $1$ is the delicate part of the theorem.
A simpler result ---and first step towards the previous theorem--- is the following.

\begin{lemma} \label{epslemma}
Under the assumptions of Theorem \ref{LIMR}, for every $\sigma<\sigma_\ast$
there exists $\varepsilon \in (0,1)$ and $\underline{t}>0$ such that
\begin{equation}\label{e3.2}
u(t,x)\geq\varepsilon\ \ \ \text{for all }t\geq \underline{t}\text{ and } |x|\leq e^{\sigma t}.
\end{equation}
\end{lemma}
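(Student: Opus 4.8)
The plan is to bound $u$ from below, via the comparison principle for \eqref{N} (a consequence of the maximum principle satisfied by $A$), by an explicit \emph{self-similar subsolution} whose plateau — the set where it equals a fixed positive constant — expands exponentially in time at a rate strictly larger than $\sigma$.

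\emph{A linear lower bound and a linearisation.} Since $f$ is concave with $f(0)=f(1)=0$, we have $f\ge0$ on $[0,1]$, so the mild formulation of \eqref{N} gives $u(t,\cdot)\ge T_tu_0=p(t,\cdot)\ast u_0\ge0$. As $u_0\not\equiv0$ and $u_0\ge0$, there are $R,m>0$ with $\int_{B_R}u_0\ge m$; inserting the lower bound in \eqref{P3} into $T_1u_0$ and using $|x-y|\le|x|+R$ on $B_R$, one obtains a constant $c_\ast>0$ (depending only on $m,R,B,n,\alpha$) with
\[
u(1,x)\ \ge\ c_\ast\,\min\bigl(1,\,|x|^{-(n+2\alpha)}\bigr)\qquad\text{for all }x\in\R^n .
\]
Next fix $\sigma<\sigma_\ast$ and set $\mu:=\sigma(n+2\alpha)<f'(0)$; pick $\mu'\in(\mu,f'(0))$ and let $\beta:=\mu'/(n+2\alpha)$, so that $\sigma<\beta<\sigma_\ast$. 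By concavity of $f$, $f(0)=0$ and $f'(0)>\mu'$, there is $\delta\in(0,1)$ with $f(s)\ge\mu' s$ for all $s\in[0,\delta]$.

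\emph{The subsolution and the conclusion.} Let $\Psi\colon\R^n\to(0,1]$ be a fixed radial profile, equal to $1$ on a ball $B_{r_0}$, with $r\mapsto\Psi(r)\,r^{n+2\alpha}$ nondecreasing — hence $\Psi(x)\le C_\Psi|x|^{-(n+2\alpha)}$ for some $C_\Psi\ge1$ and $-x\cdot\nabla\Psi(x)\le(n+2\alpha)\Psi(x)$ — chosen, after rescaling $\Psi\rightsquigarrow\Psi(\cdot/L)$ with $L$ large, so that
\[
A\Psi\ \le\ \mu'\,\Psi+\beta\,x\cdot\nabla\Psi\qquad\text{on }\R^n,
\]
the right-hand side being $\ge0$. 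Set $\delta':=\min(\delta,\,c_\ast/C_\Psi)>0$ and, for $t\ge1$, $\underline{u}(t,x):=\delta'\,\Psi\!\left(e^{-\beta(t-1)}x\right)$. Writing $y=e^{-\beta(t-1)}x$, one has $\underline{u}_t=-\delta'\beta\,y\cdot\nabla\Psi(y)$ and, in the model case $A=(-\Delta)^\alpha$, $A\underline{u}=\delta'e^{-2\alpha\beta(t-1)}(A\Psi)(y)$; since $e^{-2\alpha\beta(t-1)}\le1$ and $-\beta\,y\cdot\nabla\Psi(y)\le\mu'\Psi(y)$, the displayed inequality for $\Psi$ yields $\underline{u}_t+A\underline{u}\le\mu'\underline{u}$, and as $0\le\underline{u}\le\delta'\le\delta$ this is $\le f(\underline{u})$; thus $\underline{u}$ is a subsolution of \eqref{N} on $[1,\infty)$. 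Moreover $\underline{u}(1,x)=\delta'\Psi(x)\le\min\bigl(\delta',\,\delta'C_\Psi|x|^{-(n+2\alpha)}\bigr)\le c_\ast\min(1,|x|^{-(n+2\alpha)})\le u(1,x)$, so $u\ge\underline{u}$ for $t\ge1$ by comparison. Finally $\underline{u}(t,x)=\delta'$ whenever $|x|\le r_0e^{\beta(t-1)}$; since $\beta>\sigma$ there is $\underline{t}\ge1$ with $e^{\sigma t}\le r_0e^{\beta(t-1)}$ for $t\ge\underline{t}$, whence $u(t,x)\ge\delta'$ for all $t\ge\underline{t}$ and $|x|\le e^{\sigma t}$, i.e.\ \eqref{e3.2} with $\varepsilon=\delta'$.

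\emph{Main obstacle.} The hardest point will be constructing the profile $\Psi$ with $A\Psi\le\mu'\Psi+\beta\,x\cdot\nabla\Psi$. The monotonicity of $\Psi(r)r^{n+2\alpha}$ forces the tail of $\Psi$ to be of order exactly $|x|^{-(n+2\alpha)}$ — the same decay as the Lévy kernel and as the bounds in \eqref{P3} — so the naive estimates of $A\Psi$ (or of $(-\Delta)^\alpha\Psi$) are merely borderline convergent and can even carry the wrong sign; one must exploit that the flat plateau of $\Psi$ contributes a strictly negative term to $A\Psi$ at large $|x|$, which dominates the homogeneous part, and take the transition scale $L$ large. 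Furthermore, for a general Feller generator $A$ there is no exact dilation scaling, so the subsolution property has to be verified through the semigroup $T_t$ by means of the two-sided estimate \eqref{P3}; this, together with making the comparison principle precise for mild solutions and for subsolutions not lying in $D(A)$, is the genuine technical burden.
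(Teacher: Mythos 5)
There is a genuine gap. Your whole argument rests on the existence of a radial profile $\Psi$ with plateau and tail $\sim|x|^{-(n+2\alpha)}$ satisfying the pointwise inequality $A\Psi\le\mu'\Psi+\beta\,x\cdot\nabla\Psi$, and you never construct it — you only flag it as ``the main obstacle''. This is not a technicality one can defer: the tail decay is forced to match exactly the decay of the kernel in \eqref{P3}, so for $A=(-\Delta)^\alpha$ the estimate of $A\Psi$ in the tail is critical (the positive contribution from the tail of $\Psi$ and the negative contribution from the plateau are of the same order $|x|^{-(n+2\alpha)}$, with constants that must be tracked), and in the transition region nothing is said at all. Worse, the lemma is stated for an arbitrary kernel satisfying only \eqref{P1}--\eqref{P3}: the paper assumes no regularity of $p$ beyond continuity, no symmetry, no scale invariance, and explicitly does not assume $C_c^\infty(\R^n)\subset D(A)$, so there is no L\'evy--Khintchine representation of $A$ to compute with. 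To bound $A\Psi(x)$ from above one needs a lower bound on $T_t\Psi(x)-\Psi(x)=\int p(t,y)\{\Psi(x-y)-\Psi(x)\}\,dy$, whose integrand changes sign; the two-sided kernel bounds \eqref{P3} weight the positive and negative parts by $B^{-1}$ and $B$ respectively and do not, by themselves, deliver the required one-sided control (nor do they control possible drift-type components of $A$ when $\alpha>1/2$). So the step on which everything hinges is unproved, and for the stated generality it is unclear it can be proved by this route.

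The paper circumvents precisely this difficulty. Instead of a pointwise subsolution, it uses the exact mild solution $w=e^{(f(\delta)/\delta)t}T_tv_0$ of a linearized problem with a truncated datum $v_0=\min(a_0|x|^{-n-2\alpha},\varepsilon)$: concavity of $f$ makes $w$ a subsolution as long as $w\le\delta$, which is guaranteed on a time window $[0,t_0]$ by the truncation, and the only estimate needed is a \emph{lower} bound on $T_{t_0}v_0$, which follows directly from the lower bound in \eqref{P3} (Lemma~\ref{HR}). Iterating over windows of length $t_0$ shows the radius of the $\varepsilon$-level set multiplies by $e^{\sigma t_0}$ at each step; no evaluation of $A$ on any comparison function is ever required. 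If you want to salvage your approach, you would need either to restrict to $A=(-\Delta)^\alpha$ and carry out the construction of $\Psi$ in full (which is feasible but is itself the bulk of the work), or to recast the expanding-plateau idea through the semigroup as the paper does. The preliminary lower bound $u(1,x)\ge c_\ast\min(1,|x|^{-n-2\alpha})$ and the linearization $f(s)\ge\mu's$ on $[0,\delta]$ in your write-up are fine and coincide with the paper's starting point.
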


Even if this lemma concerns initial data decaying at infinity,
from it we can easily deduce the nonexistence of planar traveling waves
(under no assumption of their behavior at infinity, as in the following statement).

\begin{proposition}\label{NTW}
Let $n\geq 1$, $\alpha\in(0,1)$, $f$ satisfy \eqref{nonl},
and $p$ be a kernel satisfying \eqref{P1}-\eqref{P2}-\eqref{P3}. 

Then, there exists no nonconstant planar traveling wave solution of \eqref{N}.
That is, all solutions of \eqref{N} taking values in $[0,1]$ and
of the form $u(t,x)=\varphi (x + t e)$, for some vector $e\in\R^n$, are identically $0$ or $1$.
Equivalently, the only solutions $\varphi : \R^n\to [0,1]$ of
\begin{equation}\label{TW-frac-lapl}
(-\Delta)^\alpha \varphi  + e\cdot \nabla\varphi =f(\varphi)\ \ \ \ \text{in } \mathbb{R}^n
\end{equation}
are $\varphi\equiv 0$ and $\varphi \equiv 1$.
\end{proposition}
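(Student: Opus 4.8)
The plan is to derive the nonexistence of nontrivial traveling waves from Lemma~\ref{epslemma} by a comparison argument: a traveling wave moves at constant (hence linear) speed, whereas by the lemma suitable solutions of \eqref{N} already reach a fixed positive level $\varepsilon$ on balls of exponentially growing radius, so the wave must be bounded below by $\varepsilon$, and then a soft ODE comparison pushes it up to $1$. Two preliminary remarks are needed. First, $\varphi$ is continuous, since $\varphi(\cdot+te)=u(t,\cdot)$ and mild solutions of \eqref{N} are continuous in space for $t>0$ (see Section~2); and $0\le\varphi\le1$. Second, since constants are fixed by the semigroup ($\int_{\R^n}p(t,\cdot)=1$), for any $c\in(0,1)$ the spatially constant function $(t,x)\mapsto w(t)$, with $w$ solving the ODE $w'=f(w)$, $w(0)=c$, is a mild solution of \eqref{N}; as $f>0$ on $(0,1)$ and $f(1)=0$ by \eqref{nonl}, we have $w(t)\nearrow1$ as $t\to+\infty$.

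Assume now $\varphi\not\equiv0$; I claim $\varphi\equiv1$, which proves the proposition, since then $\varphi\equiv0$ is the only alternative and both $\varphi\equiv0$ and $\varphi\equiv1$ are clearly solutions. By continuity there are $x_0\in\R^n$, $r>0$ and $\delta\in(0,1)$ with $\varphi\ge\delta$ on $B_r(x_0)$. Let $v$ solve \eqref{N} with datum $v_0:=\delta\,\chi_{B_r(x_0)}$; this $v_0$ is measurable, $0\le v_0\le1$, not identically $0$, and compactly supported, hence satisfies $v_0(x)\le C|x|^{-n-2\alpha}$, so Lemma~\ref{epslemma} applies to $v$. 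Since $v_0\le\varphi$ pointwise, the comparison principle for \eqref{N} gives $v(t,x)\le u(t,x)=\varphi(x+te)$ for all $t>0$, $x\in\R^n$. Fix $\sigma\in(0,\sigma_\ast)$ and take $\varepsilon\in(0,1)$, $\underline{t}>0$ as in Lemma~\ref{epslemma}: then $\varphi(x+te)\ge v(t,x)\ge\varepsilon$ whenever $t\ge\underline{t}$ and $|x|\le e^{\sigma t}$. Given $y\in\R^n$, for $t$ large enough one has $|y-te|\le|y|+t|e|\le e^{\sigma t}$, so taking $x=y-te$ yields $\varphi(y)\ge\varepsilon$. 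Hence $\varphi\ge\varepsilon$ on $\R^n$.

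Finally, compare $u$ with the spatially constant solution $w$ of \eqref{N} given by $w'=f(w)$, $w(0)=\varepsilon$: since $w(0)=\varepsilon\le\varphi$, the comparison principle yields $w(t)\le\varphi(x+te)$ for all $t>0$, $x\in\R^n$, and fixing $y$ and setting $x=y-te$ gives $\varphi(y)\ge w(t)$ for every $t>0$. Letting $t\to+\infty$ and using $w(t)\to1$ and $\varphi\le1$ forces $\varphi(y)=1$, i.e. $\varphi\equiv1$. The reformulation in terms of \eqref{TW-frac-lapl} then follows, since any $\varphi:\R^n\to[0,1]$ solving \eqref{TW-frac-lapl} produces, via $u(t,x)=\varphi(x+te)$, a traveling-wave solution of \eqref{N} (invoking the regularity theory for the fractional equation). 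The only delicate ingredient is the comparison principle for the nonlinear problem \eqref{N} in the mild-solution framework --- that ordered initial data yield ordered solutions --- which is the maximum principle coming from the Feller property developed in Section~2; everything else is elementary.
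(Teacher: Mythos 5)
Your proof is correct and follows essentially the same route as the paper: replace the initial datum by a smaller one satisfying the decay hypothesis of Lemma~\ref{epslemma} (you use $\delta\chi_{B_r(x_0)}$, the paper uses $\min\{\varphi(x),|x|^{-n-2\alpha}\}$), conclude via comparison that $\varphi\geq\varepsilon$ everywhere, and then push up to $1$ by comparing with a spatially constant (sub)solution --- the paper uses the explicit solution of the linear problem $v_t+Av=\frac{f(\varepsilon)}{1-\varepsilon}(1-v)$ where you use the ODE $w'=f(w)$ directly, which is an equivalent step.
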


The last statement on the elliptic equation \eqref{TW-frac-lapl} has an analogue
for the Laplacian. As shown in \cite{AW}, if $|e|<2\sqrt{f'(0)}$ then equation
\eqref{TW-frac-lapl} with $\alpha=1$ admits the constants $0$ and~$1$ as only solutions
taking values in $[0,1]$.

Our results were already announced in \cite{CR}. 
Also for $\alpha \in (0,1)$, Berestycki, Rossi, and the second author \cite{BRR} have proved 
that there is invasion of the unstable state by the stable one. For a large class of nonlinearities,
Engler~\cite{Eng} has proved that the invasion has unbounded speed. 
Here we prove that for KPP nonlinearities the position
of the front is exponential in time. For another type of integro-differential equations 
Garnier~\cite{Jimmy} also establishes that the position of the level sets move exponentially in time.
Finally, exponentially propagating solutions exist in the standard KPP equations 
as soon as the initial datum decays algebraically; this fact has been noticed by 
Hamel and Roques \cite{HR}.

When $A=-\Delta$, the minimal speed $c_{\ast}$ appears when linearizing around
the leading edge of the front, that is, at $u=0$. In fact, since $f$ is concave,
the solution $\overline{u}$ of
$$
\overline{u}_t-\Delta \overline{u}=f'(0)\overline{u}\quad \text{ and }  
\quad \overline{u}(0,\cdot)=u(0,\cdot)\quad\text{ in }\R^n
$$
is a supersolution of \eqref{N}. Looking at the particular case 
$\overline{u}(0,\cdot )=\delta_0$, the Dirac mass at 0, we obtain  
$\overline{u}(t,x)=(4\pi t)^{-\frac{n}2}e^{f'(0)t-\frac{\vert x\vert2}{4t}}$.
Thus, $\overline{u}=\lambda$ if $\vert x\vert =2\sqrt{f'(0)}t+\textrm{o}(t)$. 

Let us make the same heuristic argument ---already done for instance in \cite{MVV,Cas,Cas3}---
when  $0<\alpha<1$ and \eqref{P3} holds. Now the solution $\overline{u}$ of
$$
\overline{u}_t+A\overline{u}=f'(0)\overline{u}\quad \text{ and }  
\quad  \overline{u}(0,\cdot )=\delta_0\quad\text{ in }\R^n
$$
is
$$
\overline{u}(t,x)= e^{f'(0)t} p(t,x). 
$$
Estimate \eqref{P3} gives that $\overline{u}=\lambda$ if
$\vert t^{-\frac{1}{2\alpha}}x\vert^{n+2\alpha} =t^{-\frac{n}{2\alpha}}e^{f'(0)t}\,\textrm{O}(1)$,
that is, if
\begin{equation}
\label{e3.1}
\vert x\vert =  t^{\frac{1}{n+2\alpha}}\, e^{\sigma_\ast t}\, \textrm{O}(1), 
\quad\text{where }\sigma_\ast=\di\frac{f'(0)}{n+2\alpha} 
\end{equation}
is the same exponent as in Theorem~\ref{LIMR}. 
However, our two next results show that linearizing at the front edge is not 
as accurate in the presence of fractional diffusion as it is for Brownian diffusion.

First, we will see that the exponent $\sigma_\ast$ in \eqref{e3.1} is not the right one
for nondecreasing initial data in $\R$. The front will propagate faster, in fact with an exponent 
larger than~$\sigma_\ast$. Thus, the mass located far away
from the edge of the front (that is, the mass at $+\infty$ present in a nondecreasing solution) 
does play a role in the front speed. This is due to the jumps in the 
underlying L\'evy process.

Second, even that $\sigma_\ast$ is the precise
exponent for compactly supported data, the factor $t^{\frac{1}{n+2\alpha}}$
in \eqref{e3.1} is not correct. It does not appear in the correct expression for the
position of the front, at least for $n=1$ and $A=(-\Delta)^{1/2}$; see Theorem~\ref{LEVR}.
Contrary to the situation in the previous paragraph, here the front travels slower
than the linear leading edge prediction. This is not a surprise: it is typical of the behaviour of Fisher-KPP type fronts. 
In the case $\alpha=1$ with, say, $n=1$ and $f(u)=u-u^2$, even that the leading edge analysis predicts the correct location 
of the front (if $s(t)$ is the first point where $u$ takes the value $1/2$, then $s(t)\sim 2t$ as $t\to+\infty$, 
as can easily be computed from the Gaussian kernel), a purely linear analysis would
predict $s(t)= 2t-\frac{1}{2}{\mathrm {ln}}t+O(1)$, whereas the correct expansion is 
$s(t)= 2t-\frac{3}{2}{\mathrm {ln}}t+O(1)$ (Bramson \cite{Br}).

In one space dimension, it is of interest to understand the dynamics of nondecreasing initial
data. As mentioned before, for the standard Laplacian
the level sets of $u$ travel with the speed $c_\ast$, provided that $u(0,\cdot)$ decays sufficiently 
fast at $-\infty$. In the fractional case, the mass at $+\infty$ has an effect and what happens is 
not a mere copy of the result of Theorem \ref{LIMR} for compactly supported data.
The mass at $+\infty$ makes the front travel faster to the left, indeed with a larger exponent
than $\sigma_\ast$.
In the following theorem, we may take the initial datum to be for instance  
$u_0(x)=H(x)$, the Heaviside function, or even 
$u_0(x)=lH(x)$ for any constant $l\in(0,1]$.

\begin{theorem}\label{LIMI}
Let $n= 1$, $\alpha\in(0,1)$, $f$ satisfy \eqref{nonl},
and $p$ be a kernel satisfying \eqref{P1}-\eqref{P2}-\eqref{P3}.

Let $\sigma_{\ast\ast}=\frac{f'(0)}{2\alpha}$.
Let $u$ be a solution of \eqref{N}, 
where $0\leq u_0 \leq 1$ is measurable and nondecreasing, $u_0 \not\equiv 0$, and 
$$
u_0(x)\leq C(-x)^{-2\alpha} \ \ \ \text{if } x<0
$$
for some constant $C$. Then,
\begin{enumerate}
    \item[a)] if $\sigma >\sigma_{\ast\ast}$, $u(t,x)\to 0$ 
uniformly in $\left\{x\leq-e^{\sigma t}\right\}$ as $t\to +\infty$.
    \item[b)] if $\sigma <\sigma_{\ast\ast}$, $u(t,x)\to 1$ 
uniformly in $\left\{x\geq-e^{\sigma t}\right\}$ as $t\to +\infty$.
\end{enumerate}
\end{theorem}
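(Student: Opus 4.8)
The plan is to reduce everything to the already-available one-dimensional machinery for compactly supported data (Theorem \ref{LIMR} and Lemma \ref{epslemma}), taking advantage of monotonicity and of the algebraic decay hypothesis on $u_0$ at $-\infty$. The key observation is that the scaling which produces the exponent $\sigma_\ast=f'(0)/(n+2\alpha)$ in Theorem \ref{LIMR} with $n=1$ becomes $f'(0)/(1+2\alpha)$, whereas here we must reach $\sigma_{\ast\ast}=f'(0)/(2\alpha)$; the gain of one unit in the denominator will come from the fact that the mass at $+\infty$ keeps feeding the left-going front, so that the effective initial datum never ``thins out'' the way a compactly supported bump does.

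First I would treat part a), the easier half. Since $f$ is concave with $f(0)=0$, we have $f(u)\le f'(0)u$, so $\overline u(t,x):=e^{f'(0)t}(T_t u_0)(x)$ is a supersolution of \eqref{N}; by comparison $u\le\overline u$. It then suffices to estimate $(T_t u_0)(x)=\int_{\R}p(t,y)u_0(x-y)\,dy$ for $x\le -e^{\sigma t}$. Split the integral according to whether $x-y\le -\tfrac12 e^{\sigma t}$ or not. On the first region $u_0(x-y)\le C(\tfrac12 e^{\sigma t})^{-2\alpha}$ trivially, which is $o(e^{-f'(0)t})$ once $2\alpha\sigma>f'(0)$, i.e. $\sigma>\sigma_{\ast\ast}$; on the complement we need $|y|\ge\tfrac12 e^{\sigma t}$, and the upper bound in \eqref{P3} gives $\int_{|y|\ge \tfrac12 e^{\sigma t}}p(t,y)\,dy\le B\,t^{1/2\alpha}(\tfrac12 e^{\sigma t})^{-2\alpha}\cdot\mathrm{const}$, again $o(e^{-f'(0)t})$ under the same condition on $\sigma$. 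Multiplying by $e^{f'(0)t}$ gives $\overline u(t,x)\to0$ uniformly on $\{x\le -e^{\sigma t}\}$, hence the same for $u$.

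For part b), the delicate half, I would argue as follows. Because $u_0$ is nondecreasing and $u_0\not\equiv0$, there is $\delta>0$ and a half-line $\{x\ge x_0\}$ on which $u_0\ge\delta$. Comparing (via the maximum principle) with the solution starting from $\delta\chi_{[x_0,\infty)}$, and then, since the equation is translation invariant and $u$ is nondecreasing in $x$ for all $t$, one reduces to initial data of the form $u_0=l H(x)$. The strategy is then a bootstrap in two stages, mirroring Lemma \ref{epslemma} and then Theorem \ref{LIMR}b): (i) show that for every $\sigma<\sigma_{\ast\ast}$ there are $\eps\in(0,1)$ and $\underline t$ with $u(t,x)\ge\eps$ for $t\ge\underline t$ and $x\ge -e^{\sigma t}$; (ii) upgrade $\eps$ to $1$ by a sweeping/sliding argument using a steady supersolution-free comparison, exactly as in the proof of Theorem \ref{LIMR}b) once the $\eps$-lower bound holds on an exponentially growing set. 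For (i), the heart of the matter is a lower bound of the form $(T_t u_0)(x)\ge c\, t^{1/2\alpha}|x|^{-2\alpha}$ for $x\le -1$, say, which follows from the lower bound in \eqref{P3}: indeed, since $u_0=lH(\cdot)$, $(T_tu_0)(x)=l\int_{y\le x}p(t,y)\,dy\ge l\,B^{-1}\int_{y\le x}\frac{t^{-1/2\alpha}}{(1+|t^{-1/2\alpha}y|^{1+2\alpha})}\,dy$, and for $|x|\gg t^{1/2\alpha}$ the tail of this integral is comparable to $t^{1/2\alpha}|x|^{-2\alpha}$ — crucially \emph{without} the extra $e^{-f'(0)t}$ loss, because here there is no linearization, we are bounding the true nonlinear solution from below by its diffusion part only on a short initial time interval and then iterating. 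Combining this heat-kernel lower bound with the ODE $\dot v=f(v)$ (which pushes any fixed small positive value up towards $1$ in finite time) through an iteration on dyadic time intervals $[2^k\underline t,2^{k+1}\underline t]$, using the semigroup property \eqref{P2} and the scaling built into \eqref{P3}, yields the exponential growth rate $e^{\sigma t}$ with any $\sigma<f'(0)/(2\alpha)$.

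The main obstacle is step (i) for part b): getting the exponent up to $f'(0)/(2\alpha)$ rather than the naive $f'(0)/(1+2\alpha)$. The point where the improvement must be squeezed out is precisely the interplay between the algebraic tail $|x|^{-2\alpha}$ of $\int_{y\le x}p(t,y)\,dy$ (one power better than the kernel's own $|x|^{-1-2\alpha}$ decay, because of the integration coming from the Heaviside datum) and the time-scaling $t^{1/2\alpha}$; one has to set up the dyadic iteration so that at each step the reaction term $e^{f'(0)\Delta t}$ compensates exactly the spatial spreading factor $|x|^{-2\alpha}$, and the bookkeeping of constants $\eps_k$ must be shown not to degenerate to $0$. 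I expect this to be carried out by the same kind of careful induction as in the proof of Lemma \ref{epslemma}, but with the Heaviside datum (equivalently, the extra integrated decay) replacing the compact support, which is exactly what shifts $n+2\alpha$ to $2\alpha$ in the denominator.
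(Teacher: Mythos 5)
Your overall strategy coincides with the paper's: part a) via the linear supersolution $e^{f'(0)t}T_tu_0$ together with the $|x|^{-2\alpha}$ tail of $T_tu_0$ (the paper packages this as the upper bound in Lemma \ref{HI}a); your splitting yields the same polynomial-in-$t$ times $e^{(f'(0)-2\alpha\sigma)t}$ bound, the precise power of $t$ being irrelevant, though the tail integral is $\sim Bt R^{-2\alpha}$ rather than $Bt^{1/2\alpha}R^{-2\alpha}$), and part b), step (i), via an iteration of truncated linear subsolutions in which the integrated tail $\int_{-\infty}^{x}p(t,y)\,dy\sim t\,|x|^{-2\alpha}$ --- one power better than the kernel itself --- is exactly what upgrades the exponent from $f'(0)/(1+2\alpha)$ to $f'(0)/(2\alpha)$; this is Lemma \ref{EXPI} and Corollary \ref{BDI}. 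One caution on the iteration: the paper uses arithmetic time steps $[kt_0,(k+1)t_0]$ with a \emph{fixed} step $t_0$ and a \emph{fixed} truncation level $\varepsilon\le\delta e^{-f'(0)t_0}$, which is what keeps the constants from degenerating; with your dyadic intervals the linear flow would push the truncated datum above $\delta$, destroying the subsolution property $(f(\delta)/\delta)w\le f(w)$, so the step must stay fixed.

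The genuine gap is in step (ii), which you assert goes ``exactly as in the proof of Theorem \ref{LIMR}b)''. It does not. In the compactly supported case the comparison function dominating $1-u$ is built from $|x|^{\gamma}$ and exceeds $1$ everywhere outside the bounded region $\{|x|<ae^{\nu t}\}$, so the hypothesis of the maximum principle (Lemma \ref{AR}) at spatial infinity is automatic. Here the region where $u\ge\varepsilon$ is the half line $\{x>-be^{\sigma' t}\}$, which contains $x\to+\infty$; the natural comparison function is built from $(x_-)^{\gamma}$, which vanishes for $x>0$, so near $+\infty$ it is only of size $e^{-q_{\varepsilon'}(t-t_1)}$ and one must separately control $\limsup_{x\to+\infty}(1-u)(t,x)$. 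This limit is \emph{not} zero: it equals $1-\phi_l(t)$, where $\phi_l'=f(\phi_l)$ with $\phi_l$ starting from the limit $l$ of the datum at $+\infty$ (Proposition \ref{limitsinfincr}), and the paper needs the variant maximum principle Lemma \ref{AI}, which tolerates $\limsup_{x\to+\infty}v\le\delta$ with $\delta=1-\phi_l(t_1)$, and then lets $t_1\to\infty$ to make $\delta$ arbitrarily small. Without this ingredient the comparison $1-u\le w$ is not justified on $\{x\ge 0\}\subset\{x\ge-e^{\sigma t}\}$, and the uniform convergence to $1$ does not follow; your reduction to $u_0=lH$ does not remove the difficulty, since the deficit $1-u$ at $+\infty$ persists for all finite times.
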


Note that 
$$
\sigma_{\ast\ast}=\frac{f'(0)}{2\alpha}>\frac{f'(0)}{1+2\alpha}=\sigma_{\ast},
$$
where $\sigma_{\ast}$ is the exponent in Theorem~\ref{LIMR} for $n=1$ and
compactly supported data. Notice also the slower
power decay assumed in the initial condition with respect to Theorem~\ref{LIMR}.
One could wonder whether a model with such features is physically, or biologically relevant. In fact,
this behaviour is consubstantial to fast diffusion, and the model may be relevant to explain 
fast recolonisation events in ecology; see a discussion in \cite{HR}.

Our final result concerns the case $n=1$, $A=(-\Delta)^{1/2}$, $f(u)=u(1-u)$, and 
initial data decaying fast enough at $\pm\infty$. It shows that the factor 
$t^{\frac{1}{n+2\alpha}}=t^{\frac{1}{2}}$ in \eqref{e3.1} does not appear in the front
position.

\begin{theorem}\label{LEVR}
Let $n=1$, $A=(-\Delta)^{1/2}$, and $f(u)=u(1-u)$. That is, we consider the problem 
\begin{equation} \label{N12}
\left \{ \begin{array}{rcll}
u_t +(-\Delta)^{1/2}  u & = & u(1-u) &\ \ \ \ \text{in} \ 
(0,+\infty)\times \mathbb{R} , \\
u(0,\cdot)& = & u_0  &\ \ \ \ \text{in} \ \mathbb{R}.
\end{array}\right.
\end{equation}
Let $u$ be a solution of \eqref{N12},
where $0\leq u_0 \leq 1$ is measurable, $u_0\not \equiv 0$, and
$$
u_0(x)\leq C|x|^{-2}= C|x|^{-n-2\alpha} \ \ \ \text{for all } x\in\mathbb{R}
$$
and for some constant $C$. 

Then, for all $\lambda\in(0,1)$ there exists a constant $C_\lambda>1$ and a time $t_\lambda$
$($both depending only on $u_0$ and~$\lambda)$
such that, for all $t>t_\lambda$,
\begin{equation}\label{incl}
\{|x|>C_\lambda e^{t/2} \}\subset \{u<\lambda\}\quad\text{ and }\quad
\{|x|<\frac{1}{C_\lambda} e^{t/2} \}\subset \{u>\lambda\}.
\end{equation}
As a consequence, if $t>t_\lambda$ then
$$
\{u=\lambda\}\subset (-C_\lambda e^{t/2},-\frac1{C_\lambda}e^{t/2})\cup 
(\frac1{C_\lambda}e^{t/2},C_\lambda e^{t/2})
$$
and $\{u=\lambda\}$ intersects both intervals.
\end{theorem}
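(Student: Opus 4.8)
The plan is to prove Theorem~\ref{LEVR} by obtaining matching upper and lower bounds on $u$, each controlled by explicit barriers built from the Cauchy kernel $p_{1/2}$, and then upgrading the weak conclusions of Theorem~\ref{LIMR} and Lemma~\ref{epslemma} to the sharp statement \eqref{incl} where the spurious factor $t^{1/2}$ is removed. For the outer inclusion $\{|x|>C_\lambda e^{t/2}\}\subset\{u<\lambda\}$, I would use that, since $f(u)\le f'(0)u=u$, the solution of the linearized problem $\overline u_t+(-\Delta)^{1/2}\overline u=\overline u$ with $\overline u(0,\cdot)=u_0$ is a supersolution; by the explicit formula this is $\overline u(t,x)=e^t\,(p_{1/2}(t,\cdot)\ast u_0)(x)$. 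The key point is that the hypothesis $u_0(x)\le C|x|^{-2}$ is exactly the decay of $p_{1/2}(1,\cdot)$, so by the semigroup property and the scaling $p_{1/2}(t,x)=t^{-1}p_{1/2}(1,x/t)$ one gets $(p_{1/2}(t,\cdot)\ast u_0)(x)\le C' \,t\,(t^2+x^2)^{-1}$ for $t\ge1$ (this is where the self-similarity does the work and no extra power of $t$ enters). Hence $\overline u(t,x)\le C' e^t t (t^2+x^2)^{-1}$, which is $<\lambda$ as soon as $x^2> (C'/\lambda) e^t t$, i.e. for $|x|>C_\lambda e^{t/2}$ after absorbing the polynomial factor $\sqrt t$ into a slightly larger constant and a slightly later time; this already shows the factor $t^{1/2}$ in \eqref{e3.1} is not needed in this direction.

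For the inner inclusion $\{|x|<\frac1{C_\lambda}e^{t/2}\}\subset\{u>\lambda\}$, the strategy is to construct a compactly-started subsolution that invades at the right exponential rate. First apply Lemma~\ref{epslemma}: for any $\sigma<\sigma_{\ast}=1/3$ there are $\eps\in(0,1)$ and $\underline t$ with $u(\underline t,x)\ge\eps$ on $|x|\le e^{\sigma\underline t}$. The idea is then to bootstrap: once $u\ge\eps$ on a ball of radius $R_0$ at some time, I want to show $u$ grows and the region where $u$ exceeds a fixed threshold expands like $e^{t/2}$. I would build a subsolution of the form $\usub(t,x)=\eta\,\min\{1,\,\rho(t)(t^2+x^2)^{-1/2}\cdot t\}$ type profile, or more robustly: use that a solution with data $\eps\chi_{B_{R_0}}$ is a subsolution, and estimate it from below by dropping the nonlinearity to the extent $f(u)\ge f'(\tfrac12)\cdot(\text{something})$ is false, so instead keep $f$ and compare with the ODE $\dot v=f(v)$ coupled to the kernel lower bound $p_{1/2}(t,x)\ge B^{-1} t(t^2+x^2)^{-1}$. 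Concretely: for $|x|\le e^{t/2}$ and $t$ large, the contribution $\int_{B_{R_0}} p_{1/2}(t,y)\,\eps\,dy\gtrsim \eps R_0^{\,} \,t\,(t^2+e^{t})^{-1}\gtrsim \eps R_0 e^{-t/2}$, which decays — so the linear/kernel part alone is not enough and the reaction must be used. The right move is the classical one: iterate on a sequence of times $t_k$, showing that if $u\ge\delta$ on $B_{e^{\sigma t_k}}$ then $u\ge\delta$ on $B_{e^{\sigma' t_{k+1}}}$ for a slightly larger $\sigma'$, and push $\sigma'\uparrow\sigma_\ast$; then separately, a zero-dimensional (ODE) argument plus a local uniform lower bound promotes $\eps$ up to any $\lambda<1$ on a slightly smaller exponential ball. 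This is essentially the machinery behind part~b) of Theorem~\ref{LIMR}, and I would invoke it, the new content being only the removal of the $t^{1/2}$: that removal comes from choosing the barriers to have the \emph{exact} scaling $t^{-1}p_{1/2}(1,x/t)$ rather than a crude bound, so that $C_\lambda$ can be taken constant in $t$.

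Having both inclusions in \eqref{incl}, the final two assertions are immediate: if $t>t_\lambda$ then $u<\lambda$ on $\{|x|>C_\lambda e^{t/2}\}$ and $u>\lambda$ on $\{|x|<\frac1{C_\lambda}e^{t/2}\}$, so $\{u=\lambda\}$ is contained in the annulus $\frac1{C_\lambda}e^{t/2}\le |x|\le C_\lambda e^{t/2}$, which is the displayed union of two intervals by symmetry considerations (note $u$ need not be even, but the bounds are in terms of $|x|$, so the containment holds regardless). That $\{u=\lambda\}$ meets \emph{both} intervals follows from continuity of $x\mapsto u(t,x)$ together with the intermediate value theorem applied on each of the rays $x>0$ and $x<0$: on each ray $u$ exceeds $\lambda$ near the origin (by the inner inclusion) and is below $\lambda$ far out (by the outer inclusion), so it attains the value $\lambda$ somewhere in between on each side.

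\medskip

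The main obstacle I expect is the inner/lower bound: turning the qualitative invasion of Lemma~\ref{epslemma} and Theorem~\ref{LIMR}(b) into a bound with a \emph{$t$-independent} constant $C_\lambda$. The delicate interplay is that the kernel mass transported to radius $e^{t/2}$ is exponentially small, so the entire gain must come from the reaction term acting over long times while the ``seed'' is slowly spread by the fat tail of $p_{1/2}$; one must set up the iteration of time-steps and radii so that the accumulated constants telescope to something finite rather than growing with $t$. Controlling that telescoping — and checking that the heavy tail $p_{1/2}(t,x)\sim t(t^2+x^2)^{-1}$ with $\alpha=1/2$, $n=1$ is precisely borderline for the exponent $\sigma_\ast=1/3$ to be sharp with no polynomial correction — is the technical heart of the argument.
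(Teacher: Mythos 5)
Your argument for the outer inclusion does not work. The linearized supersolution $\overline u(t,x)=e^{t}\,T_tu_0(x)\le C\,t\,e^{t}|x|^{-2}$ (for $t\ge 1$) only yields $u<\lambda$ on $\{|x|>c_\lambda\sqrt{t}\,e^{t/2}\}$, and the factor $\sqrt{t}$ cannot be ``absorbed into a slightly larger constant'': for every fixed $C_\lambda$ and all large $t$ the set $\{|x|>C_\lambda e^{t/2}\}$ contains points with $C_\lambda e^{t/2}<|x|<c_\lambda\sqrt{t}\,e^{t/2}$, where the linear barrier sits above $\lambda$ and gives no information. This is precisely the phenomenon the theorem is designed to capture --- the true front lags behind the linear leading-edge prediction by exactly the factor $t^{1/2}$ --- so the upper bound must come from a supersolution of the \emph{nonlinear} equation. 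The paper uses $u_{a,b_0}(t,x)=a\bigl(1+x^2/b(t)^2\bigr)^{-1}$ with $b(t)=(1+b_0)e^{t/2}-1$ and $a\ge 1$ (Lemma~\ref{SUBSUP}); the computation \eqref{comp12} reduces the supersolution inequality to $b^{-1}-1+a\ge 0$, where the decisive $+a$ comes from the absorption term $+u^2$. Without that term the same profile is a \emph{sub}solution of the linearized equation, which is exactly why your linear barrier cannot close this gap.

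Your inner inclusion also falls short of the critical rate. For $n=1$, $\alpha=1/2$, $f'(0)=1$ one has $\sigma_\ast=1/2$ (not $1/3$), and the inclusion to be proved sits at exactly $e^{\sigma_\ast t}=e^{t/2}$. The iteration you describe (the machinery of Lemma~\ref{EXPR} and Corollary~\ref{BDR}) only produces lower bounds on balls of radius $b\,e^{\sigma t}$ for each $\sigma<\sigma_\ast$ strictly, and letting $\sigma'\uparrow\sigma_\ast$ does not give a $t$-independent constant at the rate $e^{t/2}$, since $e^{\sigma' t}=o(e^{t/2})$ for every $\sigma'<1/2$. The paper again resorts to the explicit barrier: $u_{a,b_0}$ with $a\le(b_0-1)/b_0$ is a genuine subsolution, it is placed below $u(T,\cdot)$ via the kernel lower bound \eqref{lowercharR}, and it yields $u\ge\varepsilon$ on $\{|x|\le e^{t/2}\}$ for one small level $\varepsilon$; the level is then raised from $\varepsilon$ to an arbitrary $\lambda$ \emph{without losing the exponent} by Lemma~\ref{CONV1} with $\nu=1/2$, whose conclusion \eqref{alllevelR} keeps the same $\nu$ as the hypothesis \eqref{epsleverR}. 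Your closing paragraph (annulus containment plus the intermediate value theorem on each ray) is fine once \eqref{incl} is established, but both halves of \eqref{incl} as you propose to prove them have genuine gaps.
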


\begin{remark}
{\rm
Jones' symmetrization result \cite{Jon} for the Laplacian also applies to equation
\eqref{eq-frac-lapl} for all $\alpha\in (0,1)$. 
Its statement in the present situation is the following. 
Let $u$ be a solution of \eqref{eq-frac-lapl} such that
$u(0,\cdot)\not\equiv 0$ has compact support
in $\R^n$ and satisfies $0\leq u(0,\cdot)\leq 1$. Let $\lambda\in (0,1)$, $t > 0$, and 
$x_0\in \R^n$ be such that $u(t, x_0) = \lambda$ and $\nabla_x u(t, x_0) \not= 0$.
Then, the normal line to the level set $\{x\in\R^n\, :\, u(t, x) = \lambda \}$ 
through the point $x_0$ intersects the convex hull of the support of
the initial datum $u(0,\cdot)$.

Thus, the level sets of solutions with compactly supported initial 
data look more and more spherical as $t$ increases.
Jones' beautiful proof combines the maximum principle and Hopf's lemma 
with reflections along hyperplanes. All these tools are also available for the
fractional Laplacian.
}
\end{remark}

Let us briefly discuss the main ideas in the proofs of our results.
The supersolutions obtained by solving $\overline{u}_t+A\overline{u}=f'(0)\overline{u}$
give an upper bound for the position (in absolute value) of the level sets.
This leads immediately to parts a) of Theorems~\ref{LIMR} and \ref{LIMI}.  

Part b) on convergence towards $1$ is the delicate point and it is done in two steps.
The first one is the content of Lemma~\ref{epslemma} above.
Its lower bound \eqref{e3.2}
is accomplished by constructing solutions of the equation 
$$
\underline{v}_t+A\underline{v}=\frac{f(\delta)}{\delta}\underline{v}
$$ 
which take values in $(0,\delta)$ ---and, as a consequence of the concavity of $f$, are subsolutions
of \eqref{N}. This is done truncating an initial datum $v_0$ at a level $\varepsilon$, 
where $\varepsilon < \delta$, i.e., considering $\min{(v_0,\varepsilon)}$.
We then solve the linear equation above for $\underline{v}$ with this new datum,
up to the time $T$ where $\underline{v}$ takes the value $\delta$. At this point we compute
how the level sets have propagated.
We then truncate $\underline{v}(T,\cdot)$ at the level~$\varepsilon$ as before,
and we iterate this procedure. 

The convergence towards $1$ is shown using \eqref{e3.2} and a subsolution taking 
values in $[\varepsilon,1]$
built through the linear equation  
$$
\underline{w}_t+A\underline{w}=\frac{f(\varepsilon')}{1-\varepsilon'}(1-\underline{w})
$$ 
for some $0<\varepsilon'<\varepsilon$ and an initial condition involving 
$\vert x\vert^\gamma$, with $\gamma\in (0,2\alpha)$.
Here again we use the concavity of $f$ to ensure that
$f(\varepsilon')(1-\varepsilon')^{-1}(1-\underline{w})\leq f(\underline{w})$
for $\underline{w}\in [\varepsilon,1]$.

The proof of Theorem~\ref{LEVR} on the level sets of solutions
in the case $n=1$ and $A=(-\Delta)^{1/2}$ uses some of the previous results and,
in addition, more precise sub and supersolutions of the form 
$$
v(t,x)= a \left( 1+\frac{\vert x \vert^2}{b(t)^2}\right)^{-1},
$$
for certain constants $a$ and functions of time $b=b(t)$.

The plan of the paper is the following. In section~2 we prove several results on the
semigroup $T_t$, especially several maximum and comparison principles, as well as
some upper and lower bounds on the flow. Section~3 is devoted to prove
Proposition~\ref{NTW} on traveling waves 
and Theorem~\ref{LIMR} on solutions with $0$ limit at infinity.
Section~4 concerns Theorem~\ref{LIMI} on increasing solutions in $\R$.
Finally, section~5 establishes Theorem~\ref{LEVR} on precise bounds for the level sets.

\section{The semigroup and its generator: maximum principles and bounds}

In this section we prove several results regarding the semigroup
\begin{equation} \label{solHom}
T_tu_0(x):=
\int_{\mathbb{R}^n}p
(t,y)u_0(x-y)\, dy
=\int_{\mathbb{R}^n}p
(t,x-y)u_0(y)\, dy
\end{equation}
for $u_0\in L^\infty(\R^n)$. We refer to \cite{CH,Pa,Ta} as good monographs in the subject;
the last one puts especial emphasis on Feller semigroups.
Through the paper, all what we assume is that the 
continuous function 
$p$ satisfies \eqref{P1}-\eqref{P2}-\eqref{P3}.

Let us mention here an important situation in which such functions or kernels $p$ arise.
Let $(\{X_t\}_{t\geq 0}, P^x)$ be a Markov process on $\R^n$ with transition probability function 
$P_t(x,dy)$. The quantity $\int_E P_t(x,dy)$ is the probability that a particle, initially at 
$x$, belongs to a Borel set $E$ at time $t$. 
If $P_t(x,dy)$  has a density $p(t,x,y)$ and the process is invariant under translations,
$p(t,x,y)=p(t,x-y)$, then the semigroup property for \eqref{solHom} is just the
conditioned probabilities formula.
This is the framework when $p$ is the classical heat or Gaussian kernel
(the Markov process is then the Wiener or Brownian process), and also
when $p=p_\alpha$ is the kernel for the fractional Laplacian $(-\Delta)^\alpha$
(we then have the symmetric $2\alpha$-stable L\'evy process).

\subsection{The semigroup in $\Con$, in $\Cubn$, and in $X_\gamma$}

Even that the semigroup is well posed in  $L^\infty(\R^n)$, for some proofs it will be important 
to have it defined in some Banach spaces of functions where the
semigroup is strongly continuous. We recall that
{\it a strongly continuous semigroup} in a Banach space $X$ is a family $\{T_t\}_{t>0}$ 
of bounded linear operators on $X$ such that
$T_{t+s} = T_t  T_s$ for all positive $s$ and $t$, and such that
\begin{equation*}
\lim_{t\downarrow 0} \|T_t u - u\|=0 \qquad\text{ for every } u\in X,
\end{equation*}
where $\|\cdot\|$ is the norm in $X$.

Given the definition \eqref{solHom} of our semigroup, the last condition concerns the quantity
\begin{equation*}
(T_tu-u)(x)=\int_{\mathbb{R}^n}p (t,y) \left\{u(x-y)-u(x)\right\} \, dy.
\end{equation*}
Using \eqref{P3} and making the change of variables $t^{-\frac{1}{2\alpha}}y=\overline{y}$,
we obtain
\begin{equation}\label{strongcont2}
|(T_tu-u)(x)|\le \int_{\mathbb{R}^n} \frac{B}{1+|\overline{y}|^{n+2\alpha}}
| u(x-t^{\frac{1}{2\alpha}}\overline{y})-u(x)|\, d\overline{y}.
\end{equation}
We write it as the sum of two
integrals, one in a sufficiently large ball and the other in its complement.  
In this way we see that, in order to have this quantity tend to $0$ as $t\to 0$ uniformly in $x\in\R^n$, 
it suffices for $u$ to be bounded and uniformly continuous in $\R^n$.

Therefore we will work in the spaces
$$
\Cubn =\{ u:\R^n\to \R\ :\ u \text{ is bounded and uniformly continuous in }\R^n\}
$$
and
$$
\Con =\{u \text{ is continuous in }\R^n \text{ and } 
u(x)\to 0 \text{ as }|x|\to\infty \} \subset \Cubn.
$$
Note that, for $u\in\Con$, the continuity of $u$ and its $0$ limit at $\infty$ guarantee the
boundedness and the uniform continuity of $u$. Both are Banach spaces with the $L^\infty(\R^n)$
(or uniform convergence) norm.

Next we will define a family of Banach spaces $X_\gamma$, with $0\leq\gamma<2\alpha$, for which
$\Cubn = X_0$. Later we will check that $T_t$ maps $X_\gamma$ into itself.
In particular, we will have that
$$
T_t \, \Cubn\subset\Cubn .
$$
Using in addition that
\begin{equation*}
|T_tu(x)|\le \int_{\mathbb{R}^n} \frac{B}{1+|\overline{y}|^{n+2\alpha}}
| u(x-t^{\frac{1}{2\alpha}}\overline{y})|\, d\overline{y},
\end{equation*}
it is easy to verify the $0$ limit at infinity  for $T_tu$ whenever $u\in\Con$. That is: 
$$
T_t\, \Con\subset\Con
$$
for all $t>0$.

Moreover, since both $\Cubn$ and $\Con$ carry the $L^\infty$ norm, 
$T_t$ is a contraction in both spaces, i.e., $\|T_t\|\le 1$.
Note also that 
$$
T_t 1 = 1.
$$ 
Finally (and this is the property for a semigroup to be called a Feller semigroup), we have:
$$
\text{if }0\leq u\leq 1, \quad\text{then }  0\leq T_t u\leq 1.
$$

We will need to use some unbounded comparison functions. Thus, we have to set up the
semigroup (and make it to be strongly continuous) in a larger Banach space containing 
unbounded functions. 
For $0\leq \gamma<2\alpha$, we consider functions $u:\R^n\to\R$ such that
\begin{equation}\label{bddgamma}
|u(x)|\le C(1+|x|^\gamma) \ \text{ in }\R^n\text{ for some constant }C 
\end{equation}
and such that 
\begin{equation}\label{ucgamma}
\left \{ \begin{array}{l}
\text{for every }\varepsilon >0 \text{ there exists } \delta>0 \text{ such that:}
\vspace{.5mm}
\\
\text{if }x\in\R^n \text { and } |z|\le \delta, 
\text{ then } \displaystyle\frac{|u(x+z)-u(x)|}{1+|x|^\gamma}\leq\varepsilon.
\end{array}\right.
\end{equation}
We define
$$
X_\gamma:=\{  u:\R^n\to \R\ :\ u \text{ satisfies }\eqref{bddgamma} \text{ and }\eqref{ucgamma}\}
$$
endowed with the norm
$$
\|u\|_{X_\gamma}:= \sup_{x\in\R^n} \frac{|u(x)|}{1+|x|^\gamma}.
$$
Note that $X_0=\Cubn$. With this norm, we clearly have the continuous inclusions
$$
\Con\subset\Cubn\subset X_\gamma.
$$
In addition, $X_\gamma\subset C(\R^n)$ ---the space of continuous, possibly unbounded, 
functions in $\R^n$. 

We will also use that the functions
\begin{equation*}
w_\gamma \in X_\gamma , \quad\text{ where } w_\gamma(x)=|x|^\gamma \text{ for } x\in\R^n ,
\end{equation*}
and
\begin{equation*}
W_\gamma \in X_\gamma , \quad\text{ where } W_\gamma(x)=(x_-)^\gamma \text{ for } x\in\R 
\end{equation*}
and $x_-=\max(-x,0)$ is the negative part of $x$.
For this, simply use the inequalities $|x+z|^\gamma-|x|^\gamma\leq |z|^\gamma$
if $\gamma \leq 1$ and 
$|x+z|^\gamma-|x|^\gamma\leq \gamma |x+z|^{\gamma -1}|z|$
if $\gamma >1$.

We need to verify that $X_\gamma$ is a Banach space. Let $\{u_k\}$ be a Cauchy
sequence in $X_\gamma$. It has a pointwise limit $u$ which clearly satisfies \eqref{bddgamma}. 
Now, given $\varepsilon >0$, to control the quantity in \eqref{ucgamma},
we add and subtract the term $(u_k(x+z)-u_k(x))/(1+|x|^\gamma)$.
Since $|u_k(x)-u(x)|/(1+|x|^\gamma)\leq \varepsilon$ for $k$ large enough, it remains to control
the term
$$
\frac{|u(x+z)-u_k(x+z)|}{1+|x|^\gamma} = \frac{|u(x+z)-u_k(x+z)|}{1+|x+z|^\gamma}\,
\frac{1+|x+z|^\gamma}{1+|x|^\gamma}.
$$
Now, we simply use that if $|z|\le 1$ then 
$1+|x+z|^\gamma\leq 1+2^\gamma(|x|^\gamma+|z|^\gamma)\leq (1+2^\gamma)(1+|x|^\gamma)$.

Next, we verify that 
\begin{equation}\label{boundgamma}
T_t:X_\gamma\to X_\gamma \ \text{ is a bounded linear map and }\
\|T_t\|_{X_\gamma}\leq C_\gamma (1+ t^{\frac{\gamma}{2\alpha}})
\end{equation}
for some constant $C_\gamma$ independent of $t$. 
Indeed, making the change of variables as in \eqref{strongcont2},
we have
\begin{equation}\label{quasixgamma}
\frac{|T_t u(x)|}{1+|x|^\gamma} \le \int_{\mathbb{R}^n} \frac{B}{1+|\overline{y}|^{n+2\alpha}}\
\frac{|u(x-t^{\frac{1}{2\alpha}}\overline{y})|}{1+| x-t^{\frac{1}{2\alpha}}\overline{y}|^\gamma}\
\frac{1+| x-t^{\frac{1}{2\alpha}}\overline{y}|^\gamma}{1+|x|^\gamma}\,d\overline{y}.
\end{equation}
The last factor $1+| x-t^{\frac{1}{2\alpha}}\overline{y}|^\gamma\leq 1+2^\gamma(|x|^\gamma + 
t^{\frac{\gamma}{2\alpha}}|\overline{y}|^\gamma)$, and note that the function
$\overline{y}\mapsto (1+|\overline{y}|^\gamma)/(1+|\overline{y}|^{n+2\alpha})$ is integrable.
Thus, $T_tu$ satisfies \eqref{bddgamma}. To verify \eqref{ucgamma} for $T_tu$, we write
\begin{equation*}
\frac{|T_t u(x+z)-T_tu(x)|}{1+|x|^\gamma} \le \int_{\mathbb{R}^n} \frac{B}{1+|\overline{y}|^
{n+2\alpha}}\
\frac{|u(x+z-t^{\frac{1}{2\alpha}}\overline{y}) - u(x-t^{\frac{1}{2\alpha}}\overline{y})|}
{1+|x|^\gamma}\,d\overline{y},
\end{equation*}
and we conclude as before multiplying and dividing by 
$1+|x-t^{\frac{1}{2\alpha}}\overline{y}|^\gamma$. Thus, we have proved assertion \eqref{boundgamma}.

Finally, we check that $\{T_t\}_{t>0}$ is a strongly continuous semigroup in $X_\gamma$.
We have
\begin{equation*}
\frac{|T_t u(x)-u(x)|}{1+|x|^\gamma} \le \int_{\mathbb{R}^n} \frac{B}{1+|\overline{y}|^{n+2\alpha}}\
\frac{|u(x-t^{\frac{1}{2\alpha}}\overline{y}) - u(x)|}
{1+|x|^\gamma}\,d\overline{y}.
\end{equation*}
Given $\varepsilon>0$, the numerator in the second factor in the integral
is controlled by $C(1+|x|^\gamma+|x-t^{\frac{1}{2\alpha}}\overline{y}|^\gamma)$.
Thus,
when integrating in $\{|\overline{y}|>A\}$ the result is smaller than $\varepsilon$
if we take $A$ large enough ---since $\overline{y}\mapsto 
(1+|\overline{y}|^\gamma)/(1+|\overline{y}|^{n+2\alpha})$ is integrable.
Finally, for the integral in $\{|\overline{y}|\leq A\}$, we take $t$ small enough to ensure
$t^{\frac{1}{2\alpha}}\overline{y}\leq\delta$ (where $\delta$ is as in \eqref{ucgamma}),
and the integral becomes smaller than a constant times $\varepsilon$.

\begin{remark}
{\rm
Note that, for $0<\gamma<2\alpha$, 
$T_t:X_\gamma\to X_\gamma$ is a bounded linear map (see \eqref{boundgamma}),
but not necessarily a contraction. Instead, we have that $T_t:\Cubn\to\Cubn$ and
$T_t:\Con\to\Con$ are contractions for all $t>0$.

Recall also that $T_t:L^\infty(\R^n)\to L^\infty(\R^n)$ also form a semigroup of contractions, 
but not a strongly continuous semigroup.
}
\end{remark}

\subsection{The generator of the semigroup}

Given a strongly continuous semigroup in a Banach space $X$, one can define its (infinitesimal) 
generator $-A$  by  
\begin{equation} \label{IG}
-Au= \displaystyle\lim_{t \downarrow 0}{\frac{T_tu-u}{t}} \qquad\text{for } u\in D(A)\subset X,
\end{equation}
where the domain $D(A)$ of $A$ (or $-A$) is the subspace of $X$ defined by
\begin{equation*}
D(A):=\{u\in X \text{ for which the limit in $X$ as $t\downarrow 0$ in \eqref{IG} exists}\}. 
\end{equation*}

We will denote by
\begin{equation*}
D_0(A)\subset D_{u,b}(A)\subset D_\gamma(A)
\end{equation*}
the domain of the generator $-A$ of $\{T_t\}$ in the Banach spaces
$\Con$, $\Cubn$, and $X_\gamma$, respectively. The inclusions of these three domains
are clear because of previous considerations.

To verify that a certain function belongs to $D(A)$ may not be an easy task.
However, the following is a general fact that will be very useful later; see Lemmas~\ref{appR}
and \ref{appI} below. For every strongly continuous
semigroup $\{T_t\}$ in a Banach space $X$, we have:
\begin{equation}\label{intdomain}
\begin{split} 
& \text{ for all }u\in X\text{ and }0\le a < b,\\
&\int_a^b T_s u\, ds \in D(A) \quad\text{ and }\quad A \int_a^b T_s u\, ds =
T_au-T_b u .
\end{split}
\end{equation}
Before verifying this, note that the function $s\geq 0 \mapsto T_s u\in X$ is continuous
(and therefore locally integrable) thanks to the strong continuity of the semigroup.
Note also that from \eqref{intdomain} we deduce that
$$
D(A) \text{ is a dense subspace of } X,
$$
since $t^{-1}\int_0^t T_s u\, ds \in D(A)$ tends to $u$ in $X$ as $t\downarrow 0$.
Now, to verify \eqref{intdomain}, take $0<t<b-a$ and note that
\begin{equation}\label{Aofaverage}
\begin{split}
\frac{T_t-\textrm{Id}}{t} \int_a^b T_s u\, ds\, &=\, \frac{1}{t}\int_{a+t}^{b+t} T_s u\, ds-
\frac{1}{t}\int_a^b T_s u\, ds \\
& = \,\frac{1}{t}\int_{b}^{b+t} T_s u\, ds-
\frac{1}{t}\int_a^{a+t} T_s u\, ds \longrightarrow T_b u-T_a u
\end{split}
\end{equation}
as $t\downarrow 0$.

Similar kind of arguments establish also the following two general results
(see section~1.2 of \cite{Pa}). First,
the generator $A$ is a closed linear operator. Second, and important for later purposes:
\begin{equation*}
\begin{split}
&\text{if } u \in D(A), \, \text{ then } 
\{t\mapsto T_{t} u\} \in C^{1}([0,\infty);X),\,  T_tu\in D(A)\text{ for all } t\geq 0, \text{ and }\\ 
&\frac{d}{dt} (T_{t} u) + A T_{t} u =0 \, \text{ for all } t\geq 0.
\end{split}
\end{equation*}

Let us now recall two important properties of $A$ which follow easily from its definition
\eqref{IG} and the fact that $T_t$ is the convolution with a probability kernel.

We have the following maximum principle:
\begin{equation}\label{MP}
\text{if } u\in D_\gamma(A) \text{ satisfies }u\leq u(x_0)\text{ in $\R^n$ for some $x_0$ 
in $\R^n$, then } Au(x_0)\geq 0.
\end{equation}
Recall that here $\gamma<2\alpha$, that $D_\gamma(A)\subset X_\gamma$ is the domain of $A$
in $X_\gamma$, and that functions in $X_\gamma$ are continuous but may be unbounded.
Thus, we are assuming that this particular $u\in X_\gamma$ is bounded above and achieves its
maximum. Statement \eqref{MP} follows from 
$(T_tu-u)(x_0)=\int_{\R^n} p(t,y)\{u(x_0-y)-u(x_0)\}\, dy\leq 0$ for all $t>0$.

The operator $A$ annihilates constant functions and it is invariant by translations: 
$$
A1\equiv 0\ \text{ and }\
(Au)(x+x_0)=(Au(\cdot+x_0))(x)\ \text{for all } u\in D_\gamma(A), x_0\in\R^n, x \in  \R^n.
$$

The previous maximum principle (and also an important extension to prove our results,
Proposition~\ref{nonlocMPdomain}) apply to functions in the domain of $A$. This will
be sufficient for our results once we show the existence of ``enough'' initial conditions
belonging to the domain of $A$. We do this in the following lemmas. 
An alternative approach would be that of
subsection~ 2.6, in which we prove the needed maximum principle for ``weak'' or mild solutions.
If one chooses this alternative approach, the previous 
considerations on the initial data being in the domain of $A$ may be avoided.

We can now exhibit initial conditions in the domain of $A$
whose nonlinear flow will stay below the flow of the given arbitrary initial condition.
We start with the case of data in $\Con$.

\begin{lemma}\label{appR}
Let $n\geq 1$, $\alpha\in(0,1)$, 
and $p$ be a kernel satisfying \eqref{P1}-\eqref{P2}-\eqref{P3}.
Let $0\leq u_0 \leq 1$ be measurable in $\R^n$ and $u_0 \not\equiv 0$.

Then, for some constant $c>0$ depending on $u_0$,
$$
T_2u_0 \geq c \int_1^2 p(s,\cdot)\, ds \quad\text{and} \quad  \int_1^2 p(s,\cdot)\, ds \in D_0(A).
$$
In addition, $\int_1^2 p(s,x)\, ds \leq C|x|^{-n-2\alpha}$ for all $x\in\R^n$,
for some constant $C>0$.
\end{lemma}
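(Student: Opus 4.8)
The plan is to establish the three assertions separately, in the order stated, using only the kernel bounds \eqref{P1}--\eqref{P3}, the semigroup property \eqref{P2}, and the abstract fact \eqref{intdomain}.

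First I would prove the pointwise lower bound $T_2 u_0 \geq c\int_1^2 p(s,\cdot)\,ds$. Since $u_0\not\equiv 0$, $0\le u_0\le 1$ and $u_0$ is measurable, there exist $\rho>0$, $x_1\in\R^n$ and $\eta>0$ such that $u_0\ge \eta$ on a set $E\subset B_\rho(x_1)$ of positive measure; after a harmless translation (using that $A$ and the semigroup commute with translations, and that the conclusion is translation-covariant in an obvious way) I may take $x_1=0$. For $s\in[1,2]$ and any $x$, write $T_2u_0(x)=\int_{\R^n} p(2,x-y)u_0(y)\,dy\ge \eta\int_E p(2,x-y)\,dy$. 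Now I use \eqref{P2} in the form $p(2,z)=\int_{\R^n}p(s,z-w)p(2-s,w)\,dw$ for $s\in(1,2)$ (so $2-s\in(0,1)$) together with the lower bound in \eqref{P3}: since $|w|$ and $|x-y-z|$-type displacements are controlled when $y\in E$ (bounded) and $2-s$ is bounded, one gets $p(2,x-y)\ge c_1\, p(s,x)$ uniformly for $y\in E$ and $s\in[1,2]$, with $c_1>0$ depending only on $\rho$, $\alpha$, $n$, $B$. Integrating in $s\in[1,2]$ and using $|E|>0$ then yields $T_2u_0(x)\ge c\int_1^2 p(s,x)\,ds$ with $c=\eta\,|E|\,c_1/$(something harmless). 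The point to be careful about is making the comparison $p(2,x-y)\ge c_1 p(s,x)$ uniform in $x\in\R^n$; this is exactly where assumption \eqref{P3} (two-sided power bounds with the self-similar scaling) is used, comparing $(2)^{-n/2\alpha}(1+|2^{-1/2\alpha}(x-y)|)^{-(n+2\alpha)}$ with $s^{-n/2\alpha}(1+|s^{-1/2\alpha}x|)^{-(n+2\alpha)}$ for $s\in[1,2]$ and $|y|\le\rho$, which is a routine comparison of the two rational expressions.

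Next, for the membership $\int_1^2 p(s,\cdot)\,ds\in D_0(A)$: by \eqref{intdomain} applied in the Banach space $X=\Co$ (more precisely $C_0(\R^n)$), with $u$ replaced by $p(1,\cdot)$, we have $\int_1^2 p(s,\cdot)\,ds=\int_1^2 T_{s-1}\,p(1,\cdot)\,ds=\int_0^1 T_\sigma p(1,\cdot)\,d\sigma\in D_0(A)$, provided $p(1,\cdot)\in\Con$. That $p(1,\cdot)$ is continuous is assumed in \eqref{P1}, that it decays to $0$ at infinity (indeed like $|x|^{-n-2\alpha}$) is immediate from the upper bound in \eqref{P3}, and integrability/boundedness are likewise clear; so $p(1,\cdot)\in C_0(\R^n)$ and \eqref{intdomain} applies directly. (Here one uses $T_{s-1}p(1,\cdot)=p(s,\cdot)$, which is \eqref{P2}.)

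Finally, the decay estimate $\int_1^2 p(s,x)\,ds\le C|x|^{-n-2\alpha}$ is the easiest piece: by the upper bound in \eqref{P3}, for $s\in[1,2]$ we have $p(s,x)\le B s^{-n/2\alpha}(1+|s^{-1/2\alpha}x|)^{-(n+2\alpha)} \le B\,(1+2^{-1/2\alpha}|x|)^{-(n+2\alpha)}\cdot s^{(n+2\alpha)/2\alpha - n/2\alpha}$, and since $s\in[1,2]$ the extra power of $s$ is bounded; integrating in $s$ over $[1,2]$ keeps a bound of the form $C'(1+|x|)^{-(n+2\alpha)}\le C|x|^{-n-2\alpha}$, absorbing the $+1$ for $|x|\ge 1$ and using boundedness of $\int_1^2 p(s,x)\,ds$ for $|x|\le 1$. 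I expect the main obstacle to be the uniform-in-$x$ lower bound in the first step; everything else is a direct application of \eqref{intdomain}, \eqref{P2}, and the explicit bounds \eqref{P3}.
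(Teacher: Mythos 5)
Your proposal is correct and follows essentially the same route as the paper: the same power-tail comparison from \eqref{P3} for the lower bound, the identity $\int_1^2 p(s,\cdot)\,ds=\int_0^1 T_\tau\, p(1,\cdot)\,d\tau$ together with \eqref{intdomain} for membership in $D_0(A)$, and the upper bound in \eqref{P3} for the decay estimate. The only (immaterial) difference is in the first step, where the paper first applies $T_1$ to replace $u_0$ by a positive continuous function bounded below by $c\,\chi_{B_1(0)}$ and then convolves once more, whereas you convolve $p(2,\cdot)$ directly against a level set of $u_0$; both reduce to the same routine comparison of the kernel bounds at comparable times and bounded shifts.
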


Regarding the semigroup in $C_{u,b}(\mathbb{R})$, we have:

\begin{lemma}\label{appI}
Let $n= 1$, $\alpha\in(0,1)$, 
and $p$ be a kernel satisfying \eqref{P1}-\eqref{P2}-\eqref{P3}.
Let $0\leq u_0 \leq 1$ be measurable and nondecreasing, with $u_0 \not\equiv 0$.
Let
$$
P(t,x):=\int_{-\infty}^x p(t,y)\, dy.
$$

Then, for some constant $c>0$ depending on $u_0$, 
$$
T_2u_0 \geq c \int_1^2 P(s,\cdot)\, ds \quad\text{and} \quad\int_1^2 P(s,\cdot)\, ds \in D_{u,b}(A).
$$
In addition, $\int_1^2 P(s,x)\, ds \leq C(-x)^{-2\alpha}$ for all $x<0$,
for some constant $C>0$.
\end{lemma}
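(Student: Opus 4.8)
The plan is to treat the three assertions separately. The membership $\int_1^2 P(s,\cdot)\,ds\in D_{u,b}(A)$ should follow almost immediately from \eqref{intdomain} once $\int_1^2 P(s,\cdot)\,ds$ is recognized as an average of the semigroup along an orbit that starts from a function in $\Cub$. Writing $H=\chi_{(0,\infty)}$ for the Heaviside function, definition \eqref{solHom} gives $T_tH(x)=\int_{-\infty}^x p(t,z)\,dz=P(t,x)$, i.e. $P(t,\cdot)=T_tH$; and since $p(t,\cdot)\in L^1(\R)$ is continuous and bounded by $Bt^{-1/(2\alpha)}$ in view of \eqref{P1} and \eqref{P3}, the primitive $P(t,\cdot)$ is Lipschitz with values in $[0,1]$, so $P(t,\cdot)\in\Cub$ for every $t>0$. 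Using the semigroup property \eqref{P2} to write
\begin{equation*}
\int_1^2 P(s,\cdot)\,ds=\int_1^2 T_sH\,ds=\int_{1/2}^{3/2}T_r\bigl(T_{1/2}H\bigr)\,dr
=\int_{1/2}^{3/2}T_r\bigl(P(1/2,\cdot)\bigr)\,dr ,
\end{equation*}
I would then apply \eqref{intdomain} in the Banach space $\Cub$ with $u=P(1/2,\cdot)$ to conclude. For the pointwise decay, if $x<0$ the upper bound in \eqref{P3} and the change of variables $z=t^{-1/(2\alpha)}y$ give
\begin{equation*}
P(t,x)\le\int_{-\infty}^{t^{-1/(2\alpha)}x}\frac{B\,dz}{1+|z|^{1+2\alpha}}
=B\int_{t^{-1/(2\alpha)}(-x)}^{\infty}\frac{dz}{1+z^{1+2\alpha}}\le\frac{B}{2\alpha}\,t\,(-x)^{-2\alpha},
\end{equation*}
and integrating in $s\in(1,2)$ yields the claimed bound with $C=\frac{3B}{4\alpha}$.

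The main work, and the main obstacle, lies in the lower bound $T_2u_0\ge c\int_1^2 P(s,\cdot)\,ds$. First I would use that $u_0$ is nondecreasing and not identically zero to fix $x_1\in\R$ and $\delta_0>0$ with $u_0\ge\delta_0$ a.e. on $[x_1,\infty)$, so that
\begin{equation*}
T_2u_0(x)=\int_\R p(2,x-y)u_0(y)\,dy\ge\delta_0\int_{x_1}^{\infty}p(2,x-y)\,dy=\delta_0\,P(2,x-x_1).
\end{equation*}
It then remains to compare $P(2,x-x_1)$ with $\int_1^2 P(s,x)\,ds$, which I would do on two regions. On a half-line $\{x\ge -M\}$, monotonicity of $P(2,\cdot)$ gives $P(2,x-x_1)\ge P(2,-M-x_1)>0$ while $\int_1^2 P(s,x)\,ds\le1$, so the inequality holds there for a suitable constant. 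On $\{x\le -M\}$ with $M$ large depending only on $x_1$ and $\alpha$, the \emph{lower} bound in \eqref{P3}, handled exactly like the upper one above, gives $P(2,w)\ge c_1(-w)^{-2\alpha}$ once $-w$ is large; since $|x-x_1|\le\frac32|x|$ for $|x|\ge 2|x_1|$ this produces $P(2,x-x_1)\ge c_2(-x)^{-2\alpha}$, which I would match against the upper bound $\int_1^2 P(s,x)\,ds\le\frac{3B}{4\alpha}(-x)^{-2\alpha}$ from the first paragraph. Choosing $c$ to be the minimum of the constants coming from the two regions finishes the proof.

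This is the exact counterpart, for nondecreasing data and with $P$ in place of $p$, of the proof of Lemma~\ref{appR}: the device of shifting the time origin so that the integrand becomes an orbit starting from a function in the relevant Banach space --- here $P(1/2,\cdot)\in\Cub$, there $p(1/2,\cdot)\in\Co$ --- is common to both, and in each case \eqref{intdomain} then gives the membership in the domain. The only point requiring genuine care is the comparison as $x\to-\infty$, where the two-sided power decay \eqref{P3} is indispensable --- an upper bound on $p$ alone would not deliver the needed lower bound on $T_2u_0$ --- and where the translation by $x_1$ has to be tracked through the estimates; everything else is a change of variables or a direct appeal to \eqref{intdomain}.
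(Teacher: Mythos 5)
Your proof is correct and follows essentially the same route as the paper's: the paper likewise obtains the domain membership by writing $\int_1^2 P(s,\cdot)\,ds=\int_0^1 T_\tau P(1,\cdot)\,d\tau$ and invoking \eqref{intdomain}, gets the decay from the upper bound in \eqref{P3}, and proves the lower bound by reducing to $T_t\chi_{(0,\infty)}\geq cB^{-1}(1+|t^{-1/(2\alpha)}x|)^{-2\alpha}$ for $x<0$ (its estimate \eqref{lowercharI}) and comparing power tails on $\{x\le -M\}$ and on the complementary half-line. The only cosmetic difference is that the paper first replaces $u_0$ by the continuous nondecreasing function $T_1u_0\ge c\,\chi_{(0,\infty)}$ and writes $T_2u_0=T_1(T_1u_0)$, which avoids both extracting ``$u_0\ge\delta_0$ a.e.\ on a half-line'' directly from the a.e.\ notion of monotonicity and tracking the shift $x_1$ through the estimates.
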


Therefore, given initial data satisfying the hypothese of Theorems~\ref{LIMR} or \ref{LIMI},
we have built smaller initial data (after time $2$) satisfying the same hypothese
of the theorems and belonging to the domain of the semigroup. They will be useful to give
pointwise sense to $Au(t)(x)$ after running the nonlinear problem and, hence,
useful to apply an easy maximum principle proved in subsection~2.5 below.

Let us denote 
\begin{equation}\label{nameq}
q(t,x):= \frac{1}{t^{\frac{n}{2\alpha}}(1+|t^{-\frac{1}{2\alpha}}x|^{n+2\alpha})}
=\frac{t}{t^{\frac{n}{2\alpha}+1}+
|x|^{n+2\alpha}}.
\end{equation}

\begin{proof}[Proof of Lemma~\ref{appR}]
We claim that, for some constant $c>0$ depending only on $n$ and $\alpha$,
\begin{equation}\label{lowercharR}
\begin{split} 
T_t\chi_{B_1(0)}(x) & \geq  B^{-1} (q(t,\cdot)\ast\chi_{B_1(0)})(x)\\
& \geq  c \frac{1}{t^{\frac{n}{2\alpha}}(1+|t^{-\frac{1}{2\alpha}}x|^{n+2\alpha})}
=c \, q(t,x)
\quad\text{ for }|x|\geq 1,\, t>0,
\end{split}
\end{equation}
where $\chi_{B_1(0)}$ denotes the indicator function of the unit ball.

Indeed, by \eqref{P3} we have
$$
T_t\chi_{B_1(0)}(x)
\geq B^{-1}\int_{B_1(0)}\frac{t^{-\frac{n}{2\alpha}}}
{1+(t^{-\frac{1}{2\alpha}} |x-y|)^{n+2\alpha}}\, dy.
$$
In the integral, $|y|\leq 1 \leq|x|$ ---we are taking $|x|\geq 1$ by hypothesis. Thus, 
$|x-y|\leq|x|+|y|\leq 
2|x|$ and the  integrand is larger or equal than 
$t^{-\frac{n}{2\alpha}}(1+(t^{-\frac{1}{2\alpha}} 2|x|)^{n+2\alpha})^{-1}$,
which proves \eqref{lowercharR}. 

Now, notice that $T_1u_0=p(1,\cdot)\ast u_0$ is a positive continuous function.
Hence, $T_1u_0\geq c\chi_{B_1(0)}$ for some positive constant $c$. Thus,
$T_2u_0=T_1T_1u_0\geq T_1 c\chi_{B_1(0)}$. This fact, the lower bound \eqref{lowercharR} 
with $t=1$, and the standing upper bound \eqref{P3} for $p$ in terms of $q$
lead to the lower bound for $T_2u_0$ of the lemma.

The statement that  $\int_1^2p(s,\cdot)\, ds$ belongs to $D_0(A)$ is a particular case of
the general fact \eqref{intdomain} for strongly continuous semigroups. Note here that
$$
\int_1^2 p(s,\cdot)\, ds = \int_0^1 T_\tau \, p(1,\cdot)  \, d\tau.
$$
Finally, the upper bound for $\int_1^2p(s,\cdot)\, ds$ follows from  \eqref{P3}.
\end{proof}

\begin{proof}[Proof of Lemma~\ref{appI}]
We claim that, for some constant $c>0$ depending only on $\alpha$,
\begin{equation}\label{lowercharI}
\begin{split}
T_t\chi_{(0,+\infty)}(x) & \geq   B^{-1} (q(t,\cdot)\ast\chi_{(0,+\infty)})(x) \\
&\geq   B^{-1} c\, (1+|{t^{-\frac{1}{2\alpha}}}x|
)^{-2\alpha} \quad\text{for }x<0, \, t>0.
\end{split}
\end{equation}

Indeed, simply note that, since $x<0$, 
\begin{eqnarray*} 
T_t\chi_{(0,+\infty)}(x) &\geq & B^{-1}\int^{+\infty}_{0}
\frac{t^{-\frac{1}{2\alpha}}}{1+(t^{-\frac{1}{2\alpha}} |x-y|)^{1+2\alpha}}\, dy \\
&=    & B^{-1}  \int^{+\infty}_{-x}\frac{t^{-\frac{1}{2\alpha}}}{1+
(t^{-\frac{1}{2\alpha}} z)^{1+2\alpha}}\, dz
= B^{-1}  \int^{+\infty}_{-t^{-\frac{1}{2\alpha}}x}\frac{d\overline z}{1+
\overline{z}^{1+2\alpha}} \\
&\geq & 
c\, (1+|{t^{-\frac{1}{2\alpha}}}x|)^{-2\alpha}.
\end{eqnarray*}

The rest of the proof is identical to that of the previous lemma. Just note that
\begin{equation}\label{primitive}
P(t+s,\cdot) = T_t\, P(s,\cdot)
\end{equation}
for all positive $s$ and $t$, and hence
$\int_1^2 P(s,\cdot)\, ds=\int_0^1T_\tau P(1,\cdot)\,d\tau$.
\end{proof}

\begin{remark}\label{solnelliptic}
{\rm 
Let $n\geq 1$, $\alpha\in(0,1)$, and $p$ be a kernel satisfying \eqref{P1}-\eqref{P2}-\eqref{P3}.
Since $\{T_t\}$ is a strongly continuous semigroup of contractions both in $\Con$ and
in $\Cubn$, a general result of semigroup theory (see Proposition~3.4.3 of \cite{CH}) 
guarantees that its infinitesimal generator $-A$ is a $m$-dissipative operator.

In particular, given any $g\in\Con$ (respectively, $g\in\Cubn$) and any $\lambda >0$, 
the elliptic equation
\begin{equation}\label{elliptic}
Au+\lambda u = g \quad\text{ in }\R^n
\end{equation}
admits a unique solution $u\in D_0(A)\subset \Con$ (respectively, $u\in D_{u,b}(A)\subset\Cubn$).
It is given explicitly by the formula
$$
u=\int_0^{+\infty} e^{-\lambda t}T_tg \, dt,
$$
that is, 
$$
u(x)=\int_0^{+\infty} \int_{\R^n} e^{-\lambda t} p(t,x-y)g(y) \, dt\, dy.
$$
It is simple to check that $u$, defined in this way, belongs to the domain of $A$
and satisfies \eqref{elliptic} (see Proposition~3.4.3 of \cite{CH}). 
For the uniqueness statement, note that, for all $s>0$
and all $u\in \Cubn$,
\begin{eqnarray*}
\left\|-s^{-1}(T_su-u)+\lambda u\right\|_\infty & \geq & 
\left\|(\lambda +s^{-1})u\right\|_\infty -
\left\|s^{-1}T_su\right\|_\infty \\
& \geq & \left\|(\lambda +s^{-1})u\right\|_\infty -
\left\|s^{-1}u\right\|_\infty=\lambda \left\|u\right\|_\infty .
\end{eqnarray*}
By letting $s\downarrow 0$, if $u\in D_{u,b}(A)$ then 
$\|Au+\lambda u\|_\infty \geq  \lambda \|u\|_\infty$.
In particular, if $u$ solves \eqref{elliptic} with $g\equiv 0$, then $u\equiv 0$.
}
\end{remark}

\begin{remark}\label{levyformula}
{\rm 
Under the additional assumption that $C_c^\infty(\R^n)$ (i.e., $C^\infty$ functions with
compact support) is contained in 
$D(A)$ (that we do not make in this paper), 
\cite{BCP} (see Theorems IX and XIV) characterized the generators $A$ of Feller
semigroups. Restricted to 
$C_c^\infty(\R^n)$, $A$ is the sum of a local diffusion (second-order) operator
and an integro-differential operator of L\'evy type. See also Theorem~9.4.1 of \cite{Ta}.
In particular, \cite{BCP} characterizes the integral operators satisfying the maximum principle.
}
\end{remark}

\subsection{The nonlinear problem. Comparison principle}

Throughout this section, $A$ is the generator of a strongly continuous semigroup
in a Banach space $X$.

We recall the notion of mild solution for the nonhomogeneous linear problem
\begin{equation} \label{Nh}
\left \{ \begin{array}{rcll}
u_t + A u & = & h(t) &\  \ \text{in } (0,T), \\
u(0)& = & u_0,  &
\end{array}\right.
\end{equation}
where $T>0$, $u_0\in X$, and $h\in C([0,T];X)$ are given. The
mild solution of \eqref{Nh} (see \cite{Pa}) is given explicitly by Duhamel's principle
(or formula of the variation of constants): 
\begin{equation*}
u(t) = T_tu_0 +\int_0^t T_{t-s}\, h(s) \, ds
\end{equation*}
for all $t\in [0,T]$. One easily checks that $u\in C([0,T];X)$.

We now turn to the nonlinear problem. Let $G:[0,\infty)\times X\to X$,
$G=G(t,u)$ be a function satisfying
\begin{equation}\label{hypg}
\begin{split}
&G\in C^1([0,\infty)\times X; X)\ \ \text{ and }\\
&G(t,\cdot)\ \text{ is globally Lipschitz  in $X$
uniformly in $t\geq 0$.}
\end{split}
\end{equation}
Given any $T>0$, we are interested in the nonlinear problem 
\begin{equation} \label{Ng}
\left \{ \begin{array}{rcll}
u_t + A u & = & G(t,u) &\  \ \text{in} \ (0,T), \\
u(0)& = & u_0,  &
\end{array}\right.
\end{equation}
for a given $u_{0}\in X$. In our case (in which $X$ is a subspace of $\Cubn$), 
$G$ will be given by
\begin{equation} \label{Gtu}
G(t,u)(x):=g(t,x,u(x)),
\end{equation} 
where $g:[0,\infty)\times\R^n\times\R\to \R$ is a given nonlinearity.
We say that $u\in C([0,T];X)$ is a
mild solution of \eqref{Ng} if 
\begin{equation} \label{Ngmild}
u(t) = T_tu_0 +\int_0^t T_{t-s}\, G(s, u(s)) \, ds
\end{equation}
for all $t\in [0,T]$. 

Note that the map $N_{u_0}: C([0,T];X)\to C([0,T];X)$ given by
\begin{equation}\label{contraction}
N_{u_0}(u) (t):= T_tu_0 +\int_0^t T_{t-s}\, G(s, u(s)) \, ds
\end{equation}
is Lipschitz in $C([0,T];X)$
with Lipschitz constant 
\begin{equation}\label{mu}
\|N_{u_0}\|_{\text{Lip}}\leq T\, M\, \text{Lip}_{u}(G),
\end{equation}
where $\text{Lip}_{u}(G)$ denotes the Lipschitz constant of $G$ in $u$,
and $M:=\sup_{t\in [0,T]} \|T_{t}\|$. Recall that for any
strongly continuous semigroup, we have that
$\|T_{t}\|\leq Ce^{\omega t}$ for some constants $C$ and $\omega$; see Theorem~2.2 in chapter~1 
of \cite{Pa}.
Using \eqref{mu} (also for the maps $N_{u_0}$ defined in $C([0,\tau];X)$ with $\tau<T$)
and expression \eqref{Ngmild}, it 
follows by induction that
$(N_{u_0})^k$ is Lipschitz in $C([0,T];X)$
with Lipschitz constant $\{T\, M\, \text{Lip}_{u}(G)\}^k/ k!$, where $k$ is any positive integer. 
This constant
is less than~$1$ if we take $k$ large enough. Now, by an easy extension of the contraction principle,
not only $(N_{u_0})^k$ but also $N_{u_0}$ has a unique fixed point. Thus, there exists a unique mild 
solution $u$ of
\eqref{Ng} for every $T>0$. It is also easy to see that it is given by the limit of the iterates 
$(N_{u_0})^{i} (v)$, $i\in \Z^{+}$, 
of any given element $v\in C([0,T];X)$. In particular, taking $v=v(t)\equiv u_0$, we have
\begin{equation}\label{iterates}
u=\lim_{i\to+\infty} (N_{u_0})^{i} (u_{0}).
\end{equation}

Given $0<T<T'$, the mild solution in $(0,T')$ must coincide
in $(0,T)$ with the mild solution in this interval, by uniqueness. Thus, 
under assumption \eqref{hypg}, the mild solution of \eqref{Ng} extends uniquely
to all $t\in [0,\infty)$, i.e., it is global in time.
This applies, in particular, to the linear problem $u_t+Au=au$, with $a\in\R$, in the Banach space~$X_\gamma$.

Next, a useful fact for several future purposes.
We claim that if $u_0\in X$, $u$ is the mild solution of \eqref{Ng}, and $a\in\R$, then
\begin{equation}\label{newu}
\tilde{u}(t):=e^{at}u(t)
\end{equation}
is the mild solution of
\begin{equation} \label{Nga}
\left \{ \begin{array}{rcll}
\tilde{u}_t + A \tilde{u} & = & \tilde G(t,\tilde{u}) &\  \ \text{in} \ (0,T), \\
\tilde{u}(0)& = & u_0,  &
\end{array}\right.
\end{equation}
where
\begin{equation} \label{defga}
\tilde{G}(t,\tilde{u}) := a\tilde{u}+ e^{at}G(t,e^{-at}\tilde{u}).
\end{equation}
Note that $\tilde{G}$ also satisfies \eqref{hypg}, as $G$ does. 

To verify this fact, denote $h(s):=G(s, u(s))$ and use \eqref{Ngmild} with $t$ replaced by~$s$,
i.e., $u(s) = T_su_0 +\int_0^s T_{s-\tau}\, h(\tau) \, d\tau$.
Hence, for $0\leq s\leq t$, $T_{t-s}u(s) = T_tu_0 +\int_0^s T_{t-\tau}\, h(\tau) \, d\tau$.
We now multiply by $ae^{as}$, integrate in $s$, and use that the function
$\int_0^s T_{t-\tau}\, h(\tau) \, d\tau$ is differentiable in $s$ in order to
integrate by parts. We have
\begin{eqnarray}\label{intbyparts}
\int_0^t ae^{as} T_{t-s}u(s)\,ds &=& \int_0^t ae^{as} T_tu_0 \,ds+
\int_0^t ds\,  ae^{as} \int_0^s \, d\tau\, T_{t-\tau}\, h(\tau) \\
&=& (e^{at}-1) T_tu_0 +
e^{at}\int_0^t d\tau\, T_{t-\tau}\, h(\tau)
-\int_0^t ds\, e^{as}T_{t-s}\, h(s)\nonumber\\ \nonumber
&=& e^{at}u(t) - T_tu_0 -\int_0^t ds\, e^{as}T_{t-s}\, h(s).
\end{eqnarray}
This is equivalent to what we needed to show:
$$
\tilde{u}(t):=e^{at}u(t) =T_tu_0 +\int_0^t ds\, e^{as} T_{t-s}\,(au(s)+h(s)).
$$

In particular, if $g(t,x,u)=au$ then $u(t)=e^{at}T_{t}u_{0}$ is the mild solution of 
\eqref{Ng}-\eqref{Gtu}
for all $T>0$. This follows after considering \eqref{newu}-\eqref{Nga}-\eqref{defga}
with $a$ replaced by $-a$, since in this case $\tilde G= 0$.

We now apply all these facts to problem \eqref{N}.
Recall our standing assumption \eqref{nonl} for the nonlinearity $f$.  
Now, we extend $f$ outside $[0,1]$
to ensure that
\begin{equation}\label{hypf}
f\in C^1(\R)\text{ is globally Lipschitz and 
$f'$ is uniformly continuous in $\R$.}
\end{equation}
We work in the Banach spaces
$$
X=\Con \ \text{ and } \ X=\Cubn.
$$
Taking $G(t,u)(x):=f(u(x))$ we can verify \eqref{hypg}. We use that $f'$ is uniformly continuous
to check that the map $u\in\Cubn\mapsto f(u)\in\Cubn$ is continuously differentiable.
We also use $f(0)=0$ to ensure $u\in\Con\mapsto f(u)\in\Con$. Thus, by the previous considerations,
there is a unique mild solution $u$ of 
\begin{equation} \label{NT}
\left \{ \begin{array}{rcll}
u_t + A u & = & f(u) &\ \ \ \ \text{in} \ 
(0,\infty)\times \mathbb{R}^n , \\
u(0,\cdot)& = & u_0  &\ \ \ \ \text{in} \ \mathbb{R}^n  ,
\end{array}\right.
\end{equation}
for data $u_{0}$ in any of both Banach spaces. 

We now claim the following comparison principle. 
Assume that $f_{1}$ and $f_{2}$
satisfy \eqref{hypf} and $f_1\leq f_2$ in $\R$. 
We then have: 
\begin{equation}\label{compar}
\text{if } u_{1}(0,\cdot) \leq u_{2}(0,\cdot)  \text{ belong to } \Cubn, \text{ then }  
u_{1}(t,\cdot)\leq u_{2}(t,\cdot)
\end{equation}
for all $t\in [0,\infty)$, where $u_{1}$ and $u_{2}$ are the respective mild solutions of the nonlinear 
problem \eqref{NT} with $f$ and $u_{0}$ replaced by $f_{i}$ and $u_{i}(0,\cdot)$. 

This is verified as follows. Take $a:= \max\{\text{Lip}(f_{1}),\text{Lip}(f_{2})\}$ to ensure that 
$$
\tilde{g}_{i}(t, \tilde{u}) := a\tilde{u}+ e^{at}f_{i}(e^{-at}\tilde{u})
$$
are nondecreasing in $\tilde{u}$. We know that the mild solution to problem
\eqref{Nga} with $T=\infty$, $\tilde G=\tilde g_{i}$ and initial data
$u_{i}(0,\cdot)$ is given by $\tilde{u}_{i}(t)=e^{at}u_{i}(t)$. 
Hence, \eqref{compar} is equivalent to
\begin{equation*}
\tilde u_{1}(t,\cdot)\leq \tilde u_{2}(t,\cdot)\ \ \text{ for all } t\in [0,\infty).
\end{equation*}
Now, by \eqref{iterates}, it is enough by induction to show that $N_{1}(\tilde{w}_{1})(t)\leq
N_{2}(\tilde{w}_{2})(t)$ for all $t\in [0,\infty)$ whenever $\tilde{w}_{1}(t)\leq\tilde{w}_{2}(t)$
for all $t\in [0,\infty)$. Here $N_{i}$ denotes the map \eqref{contraction} with $g$ replaced
by $\tilde g_{i}$ and $u_{0}$ replaced by $u_{i}(0,\cdot)$. 
This fact is obvious since $u_{1}(0,\cdot)\leq u_{2}(0,\cdot)$, $\tilde g_{i}$ are nondecreasing in 
$\tilde u$, $f_1\leq f_2$, and $T_{t}$ is order preserving.

As a consequence of this comparison principle, the solution $u$ of
\eqref{N} satisfies $0\leq u\leq 1$ in all $[0,+\infty)\times\R^{n}$ for every $u_{0}\in
\Cubn$ with $0\leq u_{0}\leq 1$. We simply use that $u\equiv 0$ and $u\equiv 1$
are solutions of the same problem with smaller and bigger initial data, respectively. 

\begin{remark}\label{regularity}
{\rm
If the initial datum belongs to the domain of $A$, we have further regularity in~$t$
of the mild solution $u=u(t)$. This follows from Theorem~1.5 in section 6.1 of \cite{Pa}
and its proof; see also Definition~2.1 in section 4.2 of \cite{Pa}.
Under hypothesis
\eqref{hypg} (here the continuous differentiability of $G$ with values in $X$ is important),
the mild solution $u$ of \eqref{Ng} satisfies
\begin{equation}\label{diffdomain}
u\in C^1([0,T);X) \ \text{ and }\  u([0,T))\subset D(A)
\quad\text{ if } u_{0}\in D(A),
\end{equation}
and it is a classical solution, i.e., a solution satisfying \eqref{Ng} pointwise 
for all $t\in (0,T)$. In particular, this is the case for the linear problem, $G(t,u)=au$.

As a consequence, if the initial datum $u_{0}$ in \eqref{N}
belongs to the domain $D_{0}(A)$ (respectively, $D_{u,b}(A)$), then the mild solution 
$u$ of \eqref{N} satisfies \eqref{diffdomain} (with $D(A)=D_{0}(A)$, respectively 
$D(A)=D_{u,b}(A)$) and it is a classical solution.
}
\end{remark}

Finally, we need the following proposition describing the solution of \eqref{N} 
corresponding to 
nondecreasing initial conditions in $\R$ with a limit $l\in (0,1]$ at $+\infty$.
To prove it we will use the function
\begin{equation*}
V_1(x):=\int_1^2 ds \int_{-\infty}^x dy\,\, p(s,y).
\end{equation*}
It agrees with the function $\int_1^2P(s,\cdot)\,ds$ considered in Lemma~\ref{appI}.
By that lemma  and by \eqref{primitive} and \eqref{Aofaverage}, we know that
\begin{equation}\label{vone2}
V_1\in D_{u,b}(A) \quad\text{ and }\quad AV_1=P(1,\cdot)-P(2,\cdot)\in C_0(\R).
\end{equation}
In addition, it is clear that
\begin{equation}\label{vone3}
\lim_{x\to -\infty} V_1(x)=0 \quad\text{ and }\quad \lim_{x\to +\infty} V_1(x)=1.
\end{equation}

\begin{proposition}\label{limitsinfincr}
Let $n=1$, $\alpha\in(0,1)$, $f$ satisfy \eqref{nonl},
and $p$ be a kernel satisfying \eqref{P1}-\eqref{P2}-\eqref{P3}.
Let $u_0\in D_{u,b}(A)$ satisfy $0\leq u_0 \leq 1$,
$$
\lim_{x\to -\infty} u_0 (x)=0\quad\text{and}\quad \lim_{x\to +\infty} u_0(x)=l,
$$
where $0<l\leq 1$ is a constant. Let $u$ be the mild solution of \eqref{N}. 
Let $\phi_l=\phi_l(t)$ be the solution of
\begin{equation*}
\phi_l'=f(\phi_l) \ \ \text{ in } [0,\infty), \quad \phi_l(0)=l.
\end{equation*}

Then, the function
$$
v(t,x):=u(t,x)-\phi_l(t)V_1(x) \quad\text{satisfies } v\in C^1([0,\infty);\Co). 
$$
In particular, $\lim_{x\to -\infty} u (t,x)=0$ and $\lim_{x\to +\infty} u(t,x)=\phi_l(t)$,
both uniformly in $t\in [0,T]$, for every $T$.
\end{proposition}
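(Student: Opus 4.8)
The plan is to recast the assertion as a well-posedness and regularity statement, in the Banach space $\Co$, for a nonlinear problem with a \emph{modified} nonlinearity, and then apply the machinery of subsection~2.3 together with the properties \eqref{vone2}--\eqref{vone3} of $V_1$. As preliminary observations: $g_0:=AV_1=P(1,\cdot)-P(2,\cdot)\in\Co$; the function $u_0-lV_1$ lies in $\Cub$ and tends to $0$ both as $x\to-\infty$ (both terms vanish) and as $x\to+\infty$ (the limit is $l-l\cdot 1=0$), so $u_0-lV_1\in\Co$; moreover $u_0-lV_1\in D_0(A)$, since the difference quotients $t^{-1}(T_t(u_0-lV_1)-(u_0-lV_1))$ all lie in $\Co$ and converge in $\Cub$ as $t\downarrow0$ (because $u_0,V_1\in D_{u,b}(A)$) and $\Co$ is closed in $\Cub$. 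Finally, since $\phi_l(0)=l\in(0,1]$ and $f\geq0$ on $[0,1]$ with $f(1)=0$, the function $\phi_l$ is nondecreasing, stays in $[l,1]$, and is $C^1$ on $[0,\infty)$.

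Next I would introduce, for $s\geq0$ and $z\in\Co$,
$$
\mathcal G(s,z):=f\bigl(z+\phi_l(s)V_1\bigr)-f(\phi_l(s))\,V_1-\phi_l(s)\,g_0 .
$$
The crucial point is that $\mathcal G(s,\cdot)$ maps $\Co$ into $\Co$: the term $\phi_l(s)g_0$ is in $\Co$, and for the rest the boundary values of $V_1$ give exactly the right cancellations — as $x\to-\infty$ the value is $f(0)-f(\phi_l(s))\cdot0=0$, and as $x\to+\infty$ it is $f(\phi_l(s))-f(\phi_l(s))\cdot1=0$. Since $f$ is globally Lipschitz and $C^1$ with uniformly continuous derivative (see \eqref{hypf}), $\phi_l$ is bounded and $C^1$, and $V_1\in\Cub$, the map $\mathcal G$ satisfies \eqref{hypg} in $X=\Co$; this is checked exactly as for $G(t,u)=f(u)$ in subsection~2.3, composing with the map $(s,z)\mapsto z+\phi_l(s)V_1$ (valued in $\Cub$ and $C^1$) and then using that $\mathcal G$ is $\Co$-valued and $\Co$ is closed. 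Hence, by the contraction argument of subsection~2.3, the problem $z_t+Az=\mathcal G(t,z)$ with $z(0)=u_0-lV_1$ has a unique global mild solution $z\in C([0,\infty);\Co)$, and since $u_0-lV_1\in D_0(A)$, Remark~\ref{regularity} upgrades this to $z\in C^1([0,\infty);\Co)$ (a classical solution).

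It remains to identify $z$ with $v=u-\phi_lV_1$. Observe that $t\mapsto\phi_l(t)V_1$ is the classical, hence mild, solution of $\psi_t+A\psi=f(\phi_l(t))V_1+\phi_l(t)AV_1$ with $\psi(0)=lV_1$, so by Duhamel $\phi_l(t)V_1=T_t(lV_1)+\int_0^tT_{t-s}\bigl[f(\phi_l(s))V_1+\phi_l(s)g_0\bigr]\,ds$. Adding this to the Duhamel representation of $z$ and using the identity $\mathcal G(s,z(s))+f(\phi_l(s))V_1+\phi_l(s)g_0=f\bigl(z(s)+\phi_l(s)V_1\bigr)$, one sees that $\tilde u:=z+\phi_lV_1$ satisfies $\tilde u(t)=T_tu_0+\int_0^tT_{t-s}f(\tilde u(s))\,ds$, i.e.\ $\tilde u$ is a mild solution of \eqref{N} in $\Cub$; by the uniqueness statement of subsection~2.3, $\tilde u=u$, whence $v=z\in C^1([0,\infty);\Co)$. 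The ``in particular'' assertion then follows: $v(t,\cdot)\in\Co$ forces $u(t,x)=v(t,x)+\phi_l(t)V_1(x)\to\phi_l(t)\,V_1(\pm\infty)$, i.e.\ to $0$ as $x\to-\infty$ and to $\phi_l(t)$ as $x\to+\infty$; uniformity in $t\in[0,T]$ follows because $v([0,T])$ is compact in $\Co$ (so $v(t,\cdot)$ decays at infinity uniformly in $t$) while the limits of $V_1$ at $\pm\infty$ are $t$-independent.

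I expect the main obstacle to be the bookkeeping in the first two steps: realizing that one cannot show directly that $f(u(t,\cdot))$ has the correct limits (that reasoning is circular) and must instead replace $f$ by the modified nonlinearity $\mathcal G$ so that the problem closes up in $\Co$; and then checking carefully that $\mathcal G(s,\cdot)\colon\Co\to\Co$ — which rests precisely on the two boundary values $V_1(-\infty)=0$ and $V_1(+\infty)=1$ — and that $u_0-lV_1$ lies in the smaller domain $D_0(A)$, not merely in $D_{u,b}(A)$, so that Remark~\ref{regularity} applies in the space $\Co$.
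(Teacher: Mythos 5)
Your proof is correct and follows essentially the same route as the paper's: both decompose $u=v+\phi_l V_1$, observe that the resulting modified nonlinearity maps $\Co$ into $\Co$ thanks to $V_1(-\infty)=0$, $V_1(+\infty)=1$, $f(0)=0$ and $AV_1\in\Co$, and then invoke well-posedness and regularity of the modified problem in $X=\Co$. The only minor difference is that you identify $v$ with the $\Co$-solution by adding Duhamel formulas at the mild level, whereas the paper computes $v_t+Av$ pointwise using that $u$ and $\phi_l V_1$ are classical solutions; your explicit verification that $u_0-lV_1\in D_0(A)$ is a detail the paper leaves implicit.
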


Note that the limits at $\pm\infty$ claimed for the solution are consequence of
the statement  $v(t)\in \Co$. In addition, since $f(\phi)\simeq |f'(1)| (1-\phi)$ 
near $\phi = 1$, we have that
$\phi_l(t) \simeq 1- ce^{-|f'(1)|t}$ for $t$ large, with $c$ a positive constant.

\begin{proof}[Proof of Proposition \ref{limitsinfincr}]
Consider $v=v(t,x)$ as in the statement of the proposition. 
Since we assume $u_0\in D_{u,b}(A)$, by Remark \ref{regularity}
the solution $u$ is classical. Since in addition $V_1\in D_{u,b}(A)$ by \eqref{vone2}, we have
\begin{eqnarray*}
(v_t+Av)(t,x)&=& f(u(t,x))-f(\phi_l(t))V_1(x)-\phi_l(t) AV_1(x)\\
&=& f\left( v(t,x)+\phi_l(t)V_1(x)\right) -f(\phi_l(t))V_1(x)-\phi_l(t) AV_1(x).
\end{eqnarray*}
Therefore, $v$ solves 
\begin{equation}\label{problemv}
\left \{ \begin{array}{rcll}
v_t + A v & = & g(t,x,v) &\  \ \text{in } (0,\infty), \\
v(0)& = & u_0-lV_1,  &
\end{array}\right.
\end{equation}
where
$$
g(t,x,v):= f\left( v+\phi_l(t)V_1(x)\right) -f(\phi_l(t))V_1(x)-\phi_l(t) AV_1(x)
$$
for $t\in [0,\infty)$, $x\in \R$, and $v\in\R$. 

Let $X=\Co$ and $G$ defined by $G(t,v)(x)=g(t,x,v(t,x))$. 
Using that $f(0)=0$, \eqref{vone2}, and \eqref{vone3}, one checks that 
$G:[0,\infty)\times \Co\to \Co$ and that $G$ satisfies \eqref{hypg} with Lipschitz
constant $\text{Lip}(f)$. Thus, by previous considerations in this subsection,
$v$ is the unique classical solution of \eqref{problemv} in $X=\Co$.
In particular, $v\in C^1([0,\infty);\Co)$. {From} this, the last statement of the 
proposition follows easily.
\end{proof}

\subsection{The nonlinear problem for discontinuous initial data. Comparison principle}

Even that our semigroup is not strongly continuous in $L^\infty(\R^n)$, here we 
show that, for initial datum $u_0\in L^\infty(\R^n)$,  
our nonlinear problem \eqref{N} admits a unique mild solution which is
global in time. In addition, the comparison principle of the last subsection still
holds for bounded (perhaps discontinuous) initial data.

One starts writing the notion of mild solution of \eqref{N}:
\begin{eqnarray*}
u(t,x) &=& (T_tu_0) (x) +\int_0^t T_{t-s}\, f(u(s,x)) \, ds\\
&=& \int_{\R^n} dy\  p(t,x-y)u_0(y) + \int_0^t ds \int_{\R^n} dy\ p(t-s,x-y) f(u(s,y)),
\end{eqnarray*}
for $t\in (0,T)$ and $x\in\R^n$, where $u_0\in L^\infty(\R^n)$ is given. 
Since the map given by the right hand side is not continuous 
in time with values in $L^\infty(\R^n)$, we now work in 
the Banach space $L^\infty((0,T)\times\R^n)$. The map is clearly Lipschitz in 
$L^\infty((0,T)\times\R^n)$ with Lipschitz constant $T\text{Lip}(f)$.
By the same variant of the contraction principle  used in the previous subsection,
we conclude the existence and uniqueness of a global in time mild solution of \eqref{N} with 
$$
u\in L^\infty((0,T)\times\R^n)\quad\text{ for all } T>0.
$$

To prove the comparison principle ---as stated in the previous subsection---
we proceed in the same way as there. The only point to check is the statement about the mild
solution for the new function \eqref{newu} and nonlinearity \eqref{defga}.
The argument is the same as there since we can integrate by parts in \eqref{intbyparts}
due to the absolute continuity of 
$\int_0^s \, d\tau\, T_{t-\tau}\, h(\tau)$ in $s$, which allows to use the 
fundamental theorem of calculus.

As a consequence of this comparison principle, 
if $u$ is the mild solution of \eqref{N} with  
$n=1$ and $u_0\in [0,1]$ measurable and nondecreasing (recall that this means $u_0(\cdot+x_0)-u_0\geq 0$ 
a.e. in $\mathbb{R}$,$\text{ for all } x_0>0$), then $u(t,\cdot)$ is nondecreasing for all $t>0$. 
This follows from the fact that both $u(\cdot,\cdot+x_0)$ and $u$ are mild solutions
of \eqref{N} and the first one has a larger or equal initial datum. As a consequence,
$u(\cdot,\cdot+x_0)\geq u$ a.e., as claimed.

\subsection{A maximum principle}

The following is a maximum principle needed in next section to prove the convergence
of solutions of \eqref{N} towards $1$. It is stated here for classical subsolutions, for which 
the proof is very simple. This will suffice for our purposes ---even that we will
need to work a little more and change some initial data to have classical solutions.
Anyhow, in next subsection we prove the same result for mild solutions,
but the proof is more involved.

Recall that $X_\gamma$ is the Banach space defined in subsection~2.1. It is crucial
for our purposes to have this maximum principle in the space $X_\gamma$
containing certain unbounded functions; in the way that we will proceed, $\Cubn$ would not suffice.
However, note that the proposition also holds in $\Cubn=X_0$.

\begin{proposition}\label{nonlocMPdomain}
Let $n\geq 1$, $\alpha\in(0,1)$, $0\leq \gamma < 2\alpha$,
and $p$ be a kernel satisfying \eqref{P1}-\eqref{P2}-\eqref{P3}. 

Let $v\in C^1([0,\infty);X_\gamma)$
satisfy $v(t,\cdot)\in D_\gamma(A)$ for all $t>0$, and let
$c$ be a continuous function in $(0,\infty)\times\R^n$ which is bounded in 
$(0,T)\times\R^n$ for all $T>0$. Assume in addition:
\begin{eqnarray*}
& {\rm a)} & v(0,\cdot) \leq 0 \ \text{ in } \mathbb{R}^n.\\
& {\rm b)} & \text{For all } T>0, \text{ we have }
\limsup_{|x|\to\infty} v(t,x)\leq 0 \text{ uniformly in } t\in [0,T].\\
& {\rm c)} & \text{if }(t,x)\in (0,\infty)\times\R^n \text{ and } v(t,x)>0, \text{ then }
(v_t+Av)(t,x)\leq c(t,x)v(t,x).
\end{eqnarray*}
Then, $v\leq 0$ in all of $(0,\infty)\times\R^n$.
\end{proposition}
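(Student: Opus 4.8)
The plan is to argue by contradiction via a standard ``first touching time'' argument, but adapted to handle both the unboundedness allowed in $X_\gamma$ and the lack of compactness of $\R^n$. Suppose $v(t_0,x_0)>0$ for some $(t_0,x_0)$. The first step is to kill the possible linear growth and the time-dependence of the bound on $c$: fix $T>t_0$, let $M:=\sup_{(0,T)\times\R^n}|c|<\infty$, and replace $v$ by $\tilde v(t,x):=e^{-\lambda t}v(t,x)$ with $\lambda>M$, which satisfies the same hypotheses a)--c) but with $c$ replaced by $c-\lambda<0$ on $(0,T)$; so it suffices to derive a contradiction assuming $c<0$ on $(0,T)\times\R^n$. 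The second step is to localize: since $w_\gamma\in X_\gamma$ (with $w_\gamma(x)=|x|^\gamma$) and $T_tw_\gamma\in X_\gamma$, and since hypothesis b) gives decay at infinity, for each small $\eta>0$ the function $v-\eta(1+|x|^{\gamma'})$ for a suitable $\gamma<\gamma'<2\alpha$ — or more simply $v-\eta\psi(t,x)$ for an appropriate strict supersolution $\psi$ — is negative for $|x|$ large, uniformly in $t\in[0,T]$, and is $\le0$ at $t=0$.

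The heart of the argument is then the following. Because of step two, the set $\{(t,x)\in[0,T]\times\R^n : v(t,x)-\eta\psi(t,x)\ge 0\}$, if nonempty, is contained in a compact set $[\,\delta,T]\times \overline{B_R}$ for some $\delta>0$, $R>0$ (here I use that $v(0,\cdot)\le 0$ and continuity in $t$ with values in $X_\gamma$, hence uniform smallness near $t=0$). Let $t_1$ be the infimum of times $t$ at which $\max_x\bigl(v(t,x)-\eta\psi(t,x)\bigr)=0$; this max is attained at some $x_1$, $t_1>0$, and for $t<t_1$ we have $v(t,\cdot)-\eta\psi(t,\cdot)<0$ everywhere. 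At $(t_1,x_1)$: the function $x\mapsto v(t_1,x)$ attains a value which is a global maximum of $v(t_1,\cdot)-\eta\psi(t_1,\cdot)$, so by the maximum principle \eqref{MP} applied to $v(t_1,\cdot)-\eta\psi(t_1,\cdot)\in D_\gamma(A)$ we get $A\bigl(v(t_1,\cdot)-\eta\psi(t_1,\cdot)\bigr)(x_1)\ge 0$. On the other hand, $t\mapsto v(t,x_1)-\eta\psi(t,x_1)$ is $C^1$ (values in $X_\gamma$, hence pointwise $C^1$), negative for $t<t_1$ and zero at $t_1$, so its time derivative at $t_1$ is $\ge 0$. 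Adding, $\bigl(v_t+Av\bigr)(t_1,x_1)\ge \eta\bigl(\psi_t+A\psi\bigr)(t_1,x_1)$. But $v(t_1,x_1)=\eta\psi(t_1,x_1)>0$ (choosing $\psi>0$), so hypothesis c) gives $\bigl(v_t+Av\bigr)(t_1,x_1)\le c(t_1,x_1)v(t_1,x_1)<0$; if $\psi$ is chosen to be a supersolution, $\psi_t+A\psi\ge 0$, we obtain $0>\bigl(v_t+Av\bigr)(t_1,x_1)\ge 0$, a contradiction. Letting $\eta\downarrow 0$ and then $T\to\infty$ yields $v\le 0$ everywhere.

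The main obstacle, and the step deserving the most care, is the construction of the auxiliary function $\psi=\psi(t,x)$: it must lie in $X_\gamma$ for each $t$ and in the domain $D_\gamma(A)$, be strictly positive, tend to $+\infty$ as $|x|\to\infty$ (to force the localization), and satisfy $\psi_t+A\psi\ge 0$ classically. A natural choice is $\psi(t,x):=e^{Kt}\,(1+\varphi(x))$ where $\varphi\in D_\gamma(A)$ is a fixed nonnegative function with $\varphi(x)\to\infty$; since $A$ preserves $X_\gamma$ and $\|A\varphi\|_{X_\gamma}<\infty$ while $1+\varphi\ge c_0(1+|x|^{\gamma})$ away from the origin, taking $K$ large gives $\psi_t+A\psi=(K(1+\varphi)+A\varphi)e^{Kt}\ge 0$ provided we also verify positivity near the origin — one can instead take $\varphi=w_{\gamma'}$ smoothed near $0$, or use $1+|x|^{2}$ if $2<2\alpha$, or in general take $\psi$ built from $T_t$ applied to such a function so that membership in $D_\gamma(A)$ is automatic via \eqref{intdomain}. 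The only genuinely delicate analytic point is checking $A(1+|x|^{\gamma'})$ is bounded in $X_\gamma$-norm and that such a function (or its regularization) indeed lies in $D_\gamma(A)$; this uses the kernel bound \eqref{P3} exactly as in the verification that $w_\gamma\in X_\gamma$ earlier, together with \eqref{intdomain}. Once $\psi$ is in hand, the rest is the routine comparison-at-a-first-contact-point argument sketched above.
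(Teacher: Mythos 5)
Your argument is correct, but it takes a genuinely different (and heavier) route than the paper. The paper's proof is a one-step global maximum argument: set $w=e^{-at}v$ with $a>\|c\|_{L^\infty((0,T)\times\R^n)}$, observe that hypothesis b) already forces the positive supremum of $w$ over $[0,T]\times\R^n$ to be attained at some $(t_0,x_0)$ with $t_0>0$, and then combine $w_t(t_0,x_0)\geq 0$ with $Aw(t_0,x_0)\geq 0$ from \eqref{MP} to contradict c). In other words, the compactness that your auxiliary barrier $\psi$ is designed to manufacture is handed to you for free by b), so the whole construction of a growing supersolution and the first-touching-time argument with $v-\eta\psi$, followed by $\eta\downarrow 0$, is extra work. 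What your route buys is a Phragm\'en--Lindel\"of-type strengthening: once $\psi$ grows like $|x|^{\gamma'}$ with $\gamma<\gamma'<2\alpha$, you only need the $X_\gamma$ growth bound on $v$, not the full force of $\limsup_{|x|\to\infty}v\leq 0$; the paper obtains a comparable strengthening later (Proposition~\ref{nonlocMPmild}) by the quite different Kato-inequality route. Two points in your write-up need fixing. First, a function comparable to $1+|x|^{\gamma'}$ with $\gamma'>\gamma$ does \emph{not} lie in $X_\gamma$ (and for $\gamma=0$ no element of $X_0=\Cubn$ tends to $+\infty$), so $\psi(t,\cdot)$ must be taken in $X_{\gamma'}$ and the touching-point argument, including the application of \eqref{MP}, run in $D_{\gamma'}(A)$; this is harmless because $D_\gamma(A)\subset D_{\gamma'}(A)$ and $\gamma'<2\alpha$, but it should be said. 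Second, the parenthetical alternative ``use $1+|x|^2$ if $2<2\alpha$'' is vacuous since $\alpha<1$. With $\varphi=\tilde w_{\gamma'}=\int_0^1 T_s w_{\gamma'}\,ds$, which belongs to $D_{\gamma'}(A)$ by \eqref{intdomain} and satisfies the two-sided bounds of Lemma~\ref{HR}~b), your verification that $K(1+\varphi)+A\varphi\geq 0$ for $K$ large goes through, and the rest of the argument is sound.
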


\begin{proof}
Since $v\in C([0,\infty);X_\gamma)$, $v$ is a continuous function in $[0,\infty)\times\R^n$.
Arguing by contradiction, assume that $v>0$ somewhere in $[0,T]\times\R^n$,
for some $T>0$. Let 
$$
w(t,x):=e^{-at}v(t,x), \ \text{ where $a$ is a constant such that }
a>\|c\|_{L^\infty((0,T)\times\R^n)}.
$$

We have that $w>0$ somewhere in $[0,T]\times\R^n$. 
By assumption b), $w$ is bounded above in $[0,T]\times\R^n$
and achieves its positive maximum at some point $(t_0,x_0)\in [0,T]\times\R^n$.
By a) we have $t_0>0$. Since $w\in C^1([0,\infty);X_\gamma)$, we have that 
$w(\cdot,x_0)=w(\cdot)(x_0)$ is differentiable in $(0,t_0]$ and achieves its maximum
in this interval at $t_0$. Thus,
\begin{equation}\label{maxt}
w_t (t_0,x_0)\geq 0.
\end{equation}

On the other hand, by hypothesis,
$w(t_0,\cdot)$ belongs to $D_\gamma(A)$ and achieves its maximum
in $\R^n$ at $x_0$. Thus, by \eqref{MP}
\begin{equation*}
Aw (t_0,x_0)\geq 0.
\end{equation*}
{From} this, \eqref{maxt}, and hypothesis c) (note that $v(t_0,x_0)>0$), we deduce
\begin{eqnarray*}
0 &\leq &(w_t+Aw)(t_0,x_0) = e^{-at_0} (v_t+Av)(t_0,x_0) -ae^{-at_0} v(t_0,x_0)\\
&\leq & e^{-at_0} \left\{c(t_0,x_0)-a\right\} v(t_0,x_0) <0
\end{eqnarray*}
since  $v(t_0,x_0)>0$ and $c-a<0$ in $(0,T]\times\R^n$ (recall that $c$ is continuous in $(0,T]\times\R^n$). 
This is a contradiction.
\end{proof}

We will use the previous result in the situation given by the following
two lemmas. In this first one, we will take $\overline{r}(t)=ae^{\nu t}$ 
in our application, with $a$ and $\nu$
positive constants.

\begin{lemma}\label{AR}
Let $n\geq 1$, $\alpha\in(0,1)$, $0\leq \gamma < 2\alpha$,
and $p$ be a kernel satisfying \eqref{P1}-\eqref{P2}-\eqref{P3}. 

Let $v\in C^1([0,\infty);X_\gamma)$
satisfy $v(t,\cdot)\in D_\gamma(A)$ for all $t>0$, and let
$c$ be a continuous function in $(0,\infty)\times\R^n$ which is bounded in 
$(0,T)\times\R^n$ for all $T>0$.
Let $\overline{r}:[0,+\infty)\to[0,+\infty)$ be a continuous function 
and define
$$
\Omega_r=\left\{(t,x)\in (0,\infty)\times\mathbb{R}^n\, :\, |x|<\overline{r}(t)\right\}.
$$ 
Assume in addition:
\begin{eqnarray}
& {\rm a)} & v(0,\cdot)\leq0 \ \text{ in } \mathbb{R}^n.
\label{AR1}\\
& {\rm b)} & v\leq0 \ \text{ in } \left( (0,\infty)\times\mathbb{R}^n\right) 
\setminus \Omega_r.
\label{AR2}\\
& {\rm c)} & v_t+Av\leq c(t,x)v \  \text{ in } \Omega_r.\label{AR3}
\end{eqnarray}
Then, $v\leq 0$ in all of $(0,\infty)\times\mathbb{R}^n$.
\end{lemma}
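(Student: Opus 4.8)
The plan is to obtain Lemma~\ref{AR} as an immediate consequence of Proposition~\ref{nonlocMPdomain}; the entire proof amounts to checking that the hypotheses of that proposition hold under \eqref{AR1}--\eqref{AR3}. The structural assumptions on $v$ and $c$ — that $v\in C^1([0,\infty);X_\gamma)$ with $v(t,\cdot)\in D_\gamma(A)$ for all $t>0$, and that $c$ is continuous in $(0,\infty)\times\R^n$ and bounded in $(0,T)\times\R^n$ for every $T>0$ — are identical in the two statements, so nothing needs to be done there. Hypothesis a) of the proposition is literally \eqref{AR1}.

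The first point to verify is hypothesis b) of the proposition. I would fix $T>0$ and use that $\overline{r}$, being continuous on the compact interval $[0,T]$, is bounded there, say $\overline{r}(t)\leq R_T$ for all $t\in[0,T]$. Then every point $(t,x)$ with $t\in[0,T]$ and $|x|>R_T$ lies in $\left((0,\infty)\times\R^n\right)\setminus\Omega_r$, so \eqref{AR2} gives $v(t,x)\leq 0$. Hence $\limsup_{|x|\to\infty}v(t,x)\leq 0$ uniformly in $t\in[0,T]$, which is precisely hypothesis b) of Proposition~\ref{nonlocMPdomain}.

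Next I would check hypothesis c) of the proposition: if $(t,x)\in(0,\infty)\times\R^n$ is a point at which $v(t,x)>0$, then by \eqref{AR2} it cannot lie outside $\Omega_r$, so $(t,x)\in\Omega_r$, and \eqref{AR3} then yields $(v_t+Av)(t,x)\leq c(t,x)v(t,x)$. All hypotheses of Proposition~\ref{nonlocMPdomain} being met, it gives $v\leq 0$ in all of $(0,\infty)\times\R^n$, as claimed. There is no genuine obstacle in this argument; the only subtlety worth noting is that $\overline{r}$ need not be globally bounded — in the intended applications one takes $\overline{r}(t)=ae^{\nu t}$ — but boundedness of $\overline{r}$ on each compact time interval is exactly what is needed to pass from the strong localization \eqref{AR2} to the weaker decay-at-infinity condition b) of the proposition.
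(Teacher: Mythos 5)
Your proof is correct and is exactly the argument the paper intends: the paper's own proof is the single sentence ``The lemma follows immediately from Proposition~\ref{nonlocMPdomain},'' and your verification of hypotheses b) and c) of that proposition (via boundedness of $\overline{r}$ on compact time intervals, and via \eqref{AR2} forcing any point with $v>0$ to lie in $\Omega_r$) is precisely the intended ``immediate'' deduction.
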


The lemma follows immediately from Proposition~\ref{nonlocMPdomain}.

For increasing solutions in $\R$, we will use instead the following result. 
Note that here we assume $c\leq 0$. In our future application, 
we will take $\overline{x}(t)=-be^{\sigma' t}$ in the next lemma, with $b$ and $\sigma'$
positive constants.

\begin{lemma}\label{AI}
Let $n=1$, $\alpha\in(0,1)$, $0\leq \gamma < 2\alpha$,
and $p$ be a kernel satisfying \eqref{P1}-\eqref{P2}-\eqref{P3}. 

Let $v\in C^1([0,\infty);X_\gamma)$
satisfy $v(t,\cdot)\in D_\gamma(A)$ for all $t>0$, and let
$c\leq 0$ be a nonpositive continuous function in $(0,\infty)\times\R$ which is bounded in 
$(0,T)\times\R$ for all $T>0$.
Let $\overline{x}:[0,+\infty)\to\R$ be a continuous function, and define
$$
\Omega=\left\{(t,x)\in (0,\infty)\times\mathbb{R}\, :\, x>\overline{x}(t)\right\}.
$$ 
Assume in addition, for some constant $\delta>0$,
\begin{eqnarray}
& {\rm a)} &  v(0,\cdot)\leq 0 \ \text{ in } \mathbb{R}.
\label{AI1}\\
& {\rm b1)} & v\leq 0 \ \text{ in } \left( (0,\infty)\times\mathbb{R}\right) \setminus \Omega.
\label{AI2}\\
& {\rm b2)} & \text{For all } T>0,\ \limsup_{x\to +\infty}{v(t,x)\leq
\delta} \text{ uniformly in } t\in[0,T].
\label{AI3}\\
& {\rm c)} & v_t+Av \leq c(t,x) v  \ \text{ in } \Omega.\label{AI4}
\end{eqnarray}
Then, $v\leq \delta$ in $(0,+\infty)\times\mathbb{R}$.
\end{lemma}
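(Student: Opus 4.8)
The idea is to reduce to Proposition~\ref{nonlocMPdomain} applied not to $v$ but to a modified function that kills the mass at $+\infty$. The obstruction to applying Proposition~\ref{nonlocMPdomain} directly is hypothesis b): we only control $\limsup_{x\to+\infty} v(t,x)\le\delta$, not $\le 0$. So the first step is to introduce $\tilde v(t,x):=v(t,x)-\delta$ and hope to show $\tilde v\le 0$. Note that $\tilde v(0,\cdot)\le v(0,\cdot)\le 0$ by a), that $\limsup_{x\to+\infty}\tilde v(t,x)\le 0$ uniformly on $[0,T]$ by b2), and that on the region $x\le\overline x(t)$ we have $\tilde v\le v\le 0$ by b1); in particular $\limsup_{|x|\to\infty}\tilde v(t,x)\le 0$ uniformly on $[0,T]$, so conditions a) and b) of Proposition~\ref{nonlocMPdomain} hold for $\tilde v$. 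It remains to check condition c) of that proposition for $\tilde v$, i.e. that at a point $(t,x)$ where $\tilde v(t,x)>0$ we have $(\tilde v_t+A\tilde v)(t,x)\le c(t,x)\tilde v(t,x)$.

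Here is where the sign hypothesis $c\le 0$ enters, together with $A$ annihilating constants. Since $A\delta=0$, we have $\tilde v_t+A\tilde v=v_t+Av$. At a point where $\tilde v(t,x)>0$ we have $v(t,x)>\delta>0$, so in particular $v(t,x)>0$; if moreover this point lies in $\Omega$, then by c) of the lemma $v_t+Av\le c(t,x)v(t,x)$. Now $v(t,x)>\tilde v(t,x)>0$ and $c(t,x)\le 0$ force $c(t,x)v(t,x)\le c(t,x)\tilde v(t,x)$, so $(\tilde v_t+A\tilde v)(t,x)\le c(t,x)\tilde v(t,x)$, which is exactly condition c) of Proposition~\ref{nonlocMPdomain}. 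If instead the point lies outside $\Omega$, i.e. $x\le\overline x(t)$, then b1) gives $v(t,x)\le 0$, contradicting $v(t,x)>\delta>0$; so this case does not occur, and condition c) of Proposition~\ref{nonlocMPdomain} holds vacuously there. Also $\tilde v\in C^1([0,\infty);X_\gamma)$ since $\delta\in X_\gamma$, and $\tilde v(t,\cdot)\in D_\gamma(A)$ because constants lie in $D_\gamma(A)$ with $A\delta=0$; the function $c$ satisfies the required boundedness. Hence Proposition~\ref{nonlocMPdomain} applies to $\tilde v$ and yields $\tilde v\le 0$, that is, $v\le\delta$ in $(0,\infty)\times\R$.

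The step that needs the most care is the interplay of signs in condition c): one must use simultaneously that $v>\delta$ (not merely $v>0$) at the relevant point, that $c\le 0$, and that $A$ is linear annihilating constants, in order to trade the inequality $v_t+Av\le cv$ for $\tilde v_t+A\tilde v\le c\tilde v$ rather than the useless $\le cv$. This is the only place where the nonpositivity of $c$ is genuinely used, and it is precisely what allows the $+\infty$ mass to be absorbed into the constant $\delta$. Everything else is a bookkeeping check that the hypotheses of Proposition~\ref{nonlocMPdomain} transfer from $v$ to $\tilde v$, which is routine.
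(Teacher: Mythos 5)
Your proposal is correct and is essentially the paper's own argument: the paper likewise applies Proposition~\ref{nonlocMPdomain} to $\tilde v:=v-\delta$, using $\{\tilde v>0\}\subset\{v>0\}\subset\Omega$ and $cv=c\tilde v+c\delta\leq c\tilde v$ since $c\leq 0$. Your additional verifications (that $\tilde v$ inherits the regularity and that hypotheses a) and b) of the proposition hold) are routine and correctly done.
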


The lemma follows immediately from Proposition~\ref{nonlocMPdomain} applied to $\tilde{v}:=v-\delta$.
It satisfies $\tilde v_t+A\tilde v = v_t+Av \leq c v =c \tilde v +c\delta\leq  c \tilde v$
in $\{\tilde v >0\}$, since $c\leq 0$ and $\{\tilde v >0\}\subset \{v >0\}\subset \Omega$.

\subsection{A Kato type inequality for mild solutions and applications}

With the results in this subsection ---which are not needed to complete the proofs
of our main theorems--- one may treat the initial data in the proofs of our main
theorems as they are, without having to change the data to belong to $D(A)$.
Recall that in the maximum principle of the previous subsection its proof used 
crucially the solution to be classical and belong to $D(A)$.
In this section we establish that maximum principle, Proposition~\ref{nonlocMPdomain},
for mild solutions; no assumption on the solution being in $D(A)$ is made.
In addition, the proof in this subsection does not require hypothesis b) 
of Proposition~\ref{nonlocMPdomain} on the limits of $v$ as $|x|\to\infty$.
The statement is the following.

\begin{proposition}\label{nonlocMPmild}
Let $n\geq 1$, $\alpha\in(0,1)$, $0\leq \gamma < 2\alpha$,
and $p$ be a kernel satisfying \eqref{P1}-\eqref{P2}-\eqref{P3}. 

Let $v\in C([0,\infty);X_\gamma)$ be the mild solution of
$v_t+Av=h$ in $(0,\infty)$, $v(0,\cdot)=v_0$, where $v_0\in X_\gamma$ 
and $h\in C([0,\infty);X_\gamma)$. Let
$c$ be a continuous function in $(0,\infty)\times\R^n$ which is bounded in 
$(0,T)\times\R^n$ for all $T>0$. Assume in addition:
\begin{eqnarray*}
& {\rm i)} & v(0,\cdot) \leq 0 \ \text{ in } \mathbb{R}^n.\\
& {\rm ii)} & \text{if }(t,x)\in (0,\infty)\times\R^n \text{ and } v(t,x)>0, \text{ then }
h(t,x)\leq c(t,x)v(t,x).
\end{eqnarray*}
Then, $v\leq 0$ in all of $(0,\infty)\times\R^n$.
\end{proposition}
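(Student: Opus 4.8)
The plan is to reduce the statement for mild solutions to a pointwise differential inequality by means of a Kato-type inequality, and then run essentially the argument of Proposition \ref{nonlocMPdomain}. First I would replace $v$ by $w(t,x):=e^{-at}v(t,x)$ with $a>\|c\|_{L^\infty((0,T)\times\R^n)}$; then $w$ is the mild solution of $w_t+Aw = e^{-at}h-aw =: \tilde h$, and in the set $\{w>0\}=\{v>0\}$ we have $\tilde h \le (c-a)w < 0$. So it suffices to show: if $w$ is a mild solution of $w_t+Aw=\tilde h$ with $w(0,\cdot)\le 0$ and $\tilde h\le 0$ wherever $w>0$, then $w\le 0$. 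Since $X_\gamma\subset C(\R^n)$ and $w\in C([0,\infty);X_\gamma)$, $w$ is a genuine continuous function of $(t,x)$. The key tool I would establish is the Kato inequality for the semigroup: for the positive part $w_+=\max(w,0)$ one has, in the mild (integrated) sense,
\begin{equation*}
w_+(t) \le T_{t-s}\, w_+(s) + \int_s^t T_{t-\tau}\big(\chi_{\{w(\tau)>0\}}\,\tilde h(\tau)\big)\, d\tau
\qquad \text{for } 0\le s\le t,
\end{equation*}
as an inequality between elements of $X_\gamma$ (equivalently, pointwise in $x$). Granting this, the right-hand side is $\le T_{t-s}w_+(s)$ because $\tilde h\le 0$ on $\{w>0\}$ and $T_\tau$ is order-preserving (it is convolution with the nonnegative kernel $p$). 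Thus $w_+(t)\le T_{t-s}w_+(s)$ for all $0\le s\le t$. Letting $s\downarrow 0$ and using $w_+(0)=0$ together with strong continuity of $\{T_t\}$ in $X_\gamma$ gives $w_+(t)\le 0$, i.e. $w\le 0$, hence $v\le 0$.

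To prove the Kato inequality I would work from Duhamel's formula $w(t)=T_{t-s}w(s)+\int_s^t T_{t-\tau}\tilde h(\tau)\,d\tau$. The clean way is to regularize: approximate the convex function $r\mapsto r_+$ by smooth convex $\beta_\eta$ with $0\le\beta_\eta'\le 1$, $\beta_\eta$ nondecreasing, $\beta_\eta\to(\cdot)_+$. Because $T_\tau$ is a contraction-type positivity-preserving convolution, Jensen's inequality applied under the integral defining $T_\tau$ yields $\beta_\eta(T_\tau g)\le T_\tau(\beta_\eta\circ g)$ pointwise for any $g\in X_\gamma$ (more precisely $\beta_\eta\big(\int p(\tau,\cdot)g\big)\le\int p(\tau,\cdot)\beta_\eta(g)$, since $p(\tau,\cdot)$ is a probability density); and more generally, differentiating $\beta_\eta(w(t))$ along the flow and using this convexity/positivity of the semigroup together with the equation, one gets the differential inequality $\frac{d}{dt}\beta_\eta(w(t)) + A\,\beta_\eta(w(t)) \le \beta_\eta'(w(t))\,\tilde h(t) \le \chi_{\{w>0\}}\tilde h$ in the mild sense, i.e. $\beta_\eta(w(t))\le T_{t-s}\beta_\eta(w(s))+\int_s^t T_{t-\tau}(\beta_\eta'(w(\tau))\tilde h(\tau))\,d\tau$. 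Passing $\eta\to 0$ (dominated convergence, using $|\beta_\eta|\le |r|$ and the $X_\gamma$ bounds) gives the claimed inequality. Alternatively, and perhaps more elementarily given the very weak regularity assumed on $p$, one can discretize in time: from the semigroup property and positivity, $w(t_{k+1})\le T_{t_{k+1}-t_k}w(t_k) + \int_{t_k}^{t_{k+1}}T_{t_{k+1}-\tau}\tilde h(\tau)\,d\tau$, apply $(\cdot)_+$, use $(\,T_\tau g\,)_+\le T_\tau(g_+)$ (again Jensen with the probability kernel $p(\tau,\cdot)$, now with the convex function $(\cdot)_+$ directly) and $\tilde h\le 0$ on $\{w>0\}$, and telescope along a partition, letting the mesh go to $0$.

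I expect the main obstacle to be precisely the rigorous justification of the Kato inequality under the minimal hypotheses of the paper — only continuity of $p$ is assumed, no smoothing, and $v$ need not lie in $D(A)$, so one cannot differentiate $A\beta_\eta(w)$ classically. The inequality $(T_\tau g)_+\le T_\tau(g_+)$ itself is immediate from Jensen applied to the probability measure $p(\tau,y)\,dy$, so the real work is controlling the Duhamel remainder term uniformly: one must check that the approximation and the limit $\eta\to0$ (or mesh $\to 0$) are legitimate in the $X_\gamma$ norm, using the bound $\|T_\tau\|_{X_\gamma}\le C_\gamma(1+\tau^{\gamma/2\alpha})$ from \eqref{boundgamma} and the boundedness of $c$ and local boundedness of $h$. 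Once that is in place, the order-preserving property of $T_\tau$ and the sign of $\tilde h$ on $\{w>0\}$ finish the proof with no further input, and in particular hypothesis b) of Proposition \ref{nonlocMPdomain} is not needed because the comparison $w_+(t)\le T_{t-s}w_+(s)$ is a pointwise statement that already encodes the decay built into $X_\gamma$.
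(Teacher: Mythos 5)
Your strategy is in essence the paper's own: the exponential tilt $w=e^{-at}v$, a Kato-type inequality in the mild (Duhamel) sense obtained from Jensen's inequality for the probability measure $p(\tau,y)\,dy$, and the observation that the resulting source term is nonpositive because the derivative of the convex function vanishes off $\{w>0\}$ while $-av+h\le (c-a)v\le 0$ on $\{v>0\}$. The only cosmetic difference is that the paper fixes a single $C^1$ convex $\varphi$ with $\varphi\equiv 0$ on $(-\infty,0]$ and $\varphi>0$ on $(0,+\infty)$ (e.g.\ $\varphi(u)=u^2/(u+1)$ for $u\ge 0$), so that $\varphi(w(t))\le 0$ already forces $w(t)\le 0$ and no final limit $\eta\to 0$ is needed.

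The genuine gap is exactly the one you flag and do not close: the proof of the mild Kato inequality itself. ``Differentiating $\beta_\eta(w(t))$ along the flow'' is precisely what a mild solution does not permit, and Jensen applied naively to the Duhamel formula does not produce the cutoff $\chi_{\{w>0\}}$ (equivalently $\varphi'(w)$) in front of $\tilde h$. This is the entire content of the paper's Proposition~\ref{kato}, whose proof is a double regularization: one replaces $v(t)$ and $h(t)$ by $v^\delta(t)=\dashint_0^\delta T_\tau v(t)\,d\tau$ and $h^\delta(t)=\dashint_0^\delta T_\tau h(t)\,d\tau$, which belong to $D_\gamma(A)$ by \eqref{intdomain}; by a result in \cite{Pa} and uniqueness of mild solutions, $v^\delta$ is then a \emph{classical} solution of the problem with data $h^\delta$, $v^\delta(0)$, so Jensen can be applied to the difference quotients $(T_s\varphi(v^\delta)-\varphi(v^\delta))/s$ and the limit $s\downarrow 0$ taken; a further average $\dashint_0^\varepsilon T_\tau$ is applied to $\varphi(v^\delta)$ so that the resulting function lies in $D_\gamma(A)$ and its own Duhamel representation is legitimate, after which one lets $\varepsilon\downarrow 0$ and $\delta\downarrow 0$. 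Without some such device your argument remains a program rather than a proof.

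Your time-discretization alternative, moreover, fails as stated. Applying $(\cdot)_+$ to the one-step Duhamel formula and using $(a+b)_+\le a_++b_+$ together with $(T_\tau g)_+\le T_\tau(g_+)$ leaves the remainder $\int_{t_k}^{t_{k+1}}T_{t_{k+1}-\tau}\bigl(\tilde h(\tau)_+\bigr)\,d\tau$, and $\tilde h(\tau)_+$ is supported in $\{w(\tau)\le 0\}$, where hypothesis ii) gives no sign information on $h$ (indeed $-aw\ge 0$ there). Since $T_\tau$ is nonlocal, this positivity is spread everywhere; telescoping then yields only $w_+(t)\le\int_0^t T_{t-\tau}(\tilde h(\tau)_+)\,d\tau$, which is not the desired conclusion.
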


{From} this result, to be proved later in this subsection, one deduces the analogues
of Lemmas \ref{AR} and \ref{AI} for mild solutions in the same way as in the
previous subsection.

To prove Proposition \ref{nonlocMPmild}, we need to establish an inequality of Kato type
for mild solutions. In the stationary case and for functions in the domain of $A$ it states
the following:  
\begin{equation}\label{kato1}
\left \{ \begin{array}{l}
\text{if } \varphi:\R\to\R \text{ is $C^1$ and convex, }v\in D_\gamma(A), \text{ and }
\varphi(v) \in D_\gamma(A),
\vspace{.5mm}
\\
\text{then }A\varphi(v)\leq \varphi'(v) Av \,\text{ in $\R^n$.}
\end{array}\right.
\end{equation}
Its proof is simple. First notice that, by Jensen's inequality,
\begin{equation*}
\begin{split}
(T_s\,\varphi(v) )(x) &= \int_{\R^n} p(s,x-y) \varphi(v(y))\, dy
\\& \geq  \varphi \left( \int_{\R^n} p(s,x-y)v(y)\, dy\right) =\varphi (T_sv (x))
\end{split}
\end{equation*}
and therefore
\begin{equation}\label{calculkato2}
(T_s\varphi(v)-\varphi(v))(x)  \geq 
\varphi (T_sv (x)) -\varphi(v(x)) 
\geq \varphi'(v(x))\, (T_sv-v) (x).
\end{equation}
for all $s>0$. Dividing by $s$ and taking the limits as $s\to 0$ (which we assume to exist), 
we deduce \eqref{kato1}.

When $A=-\Delta$ and $v\in L^1$ is a distributional solution of $-\Delta v=h$,
\eqref{kato1} was first proved by Kato.

The following result states the analogue of \eqref{kato1} for mild solutions in the spaces
$X_\gamma$. Recall that $X_0=\Cubn$; in this space we simply ask the function
$\varphi$ to be $C^1$ and convex. Instead, for $0<\gamma<2\alpha$,  in addition we need to assume
that $\varphi'$ is bounded in $\R$. This is to ensure that
$\varphi(v)\in X_\gamma$ whenever $v\in X_\gamma$ ---recall the functions in $X_\gamma$
may be unbounded if $\gamma >0$. Instead, $\varphi$ being $C^1$ and convex
suffices to ensure that $\varphi(v)\in\Cubn$ whenever $v\in\Cubn$.

\begin{proposition}\label{kato}
Let $n\geq 1$, $\alpha\in(0,1)$, $0\leq \gamma < 2\alpha$,
and $p$ be a kernel satisfying \eqref{P1}-\eqref{P2}-\eqref{P3}. 

Let $0<T\leq +\infty$ and $v\in C([0,T);X_\gamma)$ be the mild solution of
$v_t+Av=h$ in $[0,T]$, $v(0,\cdot)=v_0$, where $v_0\in X_\gamma$ 
and $h\in C([0,T];X_\gamma)$.
Let $\varphi:\R\to\R$ be a $C^1$ convex function. If $\gamma>0$ assume in addition that
$\varphi'$ is bounded.

Then, $\varphi(v) \in C([0,T);X_\gamma)$ satisfies
$\varphi(v)_t+A\varphi(v)\leq \varphi'(v) h$ in the following mild sense:
\begin{equation} \label{convexmild}
\varphi(v(t)) \leq T_t\,\varphi(v_0) +\int_0^t T_{t-s}\, \{\varphi'(v(s))h(s)\} \, ds
\quad\text{ in $\R^n$ for all } t\in [0,T].
\end{equation}
\end{proposition}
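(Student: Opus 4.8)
The plan is to mollify the convexity inequality \eqref{calculkato2} at the level of the mild-solution formula, and then pass to the limit. The key difficulty is that the mild solution need not be differentiable in $t$, so we cannot simply differentiate the Duhamel formula and apply the stationary Kato inequality \eqref{kato1} pointwise in $t$. Instead I would introduce the Yosida-type approximations, or more elementarily, work with difference quotients in time.

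\textbf{Step 1: the difference quotient identity.} First I would write the mild formula $v(t) = T_t v_0 + \int_0^t T_{t-s} h(s)\, ds$ and compute, for small $\tau>0$,
\begin{equation*}
v(t+\tau) = T_\tau v(t) + \int_0^\tau T_{\tau-\sigma} h(t+\sigma)\, d\sigma .
\end{equation*}
This is the semigroup/Duhamel identity restarted at time $t$, valid since $v\in C([0,T);X_\gamma)$ and $h\in C([0,T];X_\gamma)$. Applying $\varphi$ and using \eqref{calculkato2} with $s=\tau$ (Jensen for the operator $T_\tau$, which is a convolution with a probability kernel), together with convexity of $\varphi$ to absorb the Duhamel integral term, I would obtain a pointwise-in-$x$ inequality of the form
\begin{equation*}
\varphi(v(t+\tau)) \leq T_\tau\, \varphi(v(t)) + \varphi'(v(t+\tau))\int_0^\tau T_{\tau-\sigma} h(t+\sigma)\, d\sigma + (\text{error}),
\end{equation*}
where the error comes from the nonlinearity of $\varphi$ along the integral increment and is $o(\tau)$ in $X_\gamma$ because $\varphi'$ is (locally, or globally if $\gamma>0$) Lipschitz on the range of $v$ and the increment $\int_0^\tau T_{\tau-\sigma}h(t+\sigma)\,d\sigma = O(\tau)$. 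Here the hypothesis that $\varphi'$ be bounded when $\gamma>0$ is exactly what keeps $\varphi(v)$ and $\varphi'(v)h$ inside $X_\gamma$ and makes these estimates uniform.

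\textbf{Step 2: iterate over a partition and telescope.} Fix $t\in[0,T)$ and a partition $0=t_0<t_1<\cdots<t_N=t$ with mesh $\to 0$. Applying the Step-1 inequality on each subinterval and then hitting it with the (order-preserving, contraction) operators $T_{t-t_{k+1}}$ and summing, the $T_\tau\varphi(v(t_k))$ terms telescope and I am left with
\begin{equation*}
\varphi(v(t)) \leq T_t\,\varphi(v_0) + \sum_{k=0}^{N-1} T_{t-t_{k+1}}\Big( \varphi'(v(t_{k+1})) \int_0^{t_{k+1}-t_k} T_{t_{k+1}-t_k-\sigma} h(t_k+\sigma)\, d\sigma \Big) + \sum_k T_{t-t_{k+1}}(\text{error}_k).
\end{equation*}
The error terms sum to something that tends to $0$ as the mesh goes to $0$ (each is $o(t_{k+1}-t_k)$ uniformly, so the sum is $o(1)$ times the total length). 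The main Riemann-type sum converges, by continuity of $s\mapsto \varphi'(v(s))$ and $s\mapsto h(s)$ and strong continuity of the semigroup, to $\int_0^t T_{t-s}\{\varphi'(v(s)) h(s)\}\, ds$. Passing to the limit (using that $T_{t-t_{k+1}}$ are order-preserving, so the inequality survives) yields exactly \eqref{convexmild}. Finally, $\varphi(v)\in C([0,T);X_\gamma)$ is immediate from $v\in C([0,T);X_\gamma)$ and the (local) Lipschitz continuity of $\varphi$, again using boundedness of $\varphi'$ when $\gamma>0$ so that the composition stays in $X_\gamma$ with controlled norm.

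\textbf{Main obstacle.} The delicate point is controlling the error terms from the nonlinearity of $\varphi$ applied to the Duhamel increment, and doing so uniformly enough that the telescoped sum of errors vanishes in the limit. Concretely one needs: $\varphi'$ Lipschitz on the relevant range (true since $\varphi\in C^1$ and $v$ is bounded, or $\varphi'$ bounded when $\gamma>0$); the increment $\int_0^\tau T_{\tau-\sigma}h(t+\sigma)\,d\sigma$ bounded by $C\tau$ in $X_\gamma$-norm uniformly for $t$ in compacts (true since $\|T_s\|_{X_\gamma}\le C_\gamma(1+s^{\gamma/2\alpha})$ by \eqref{boundgamma} and $h$ is continuous hence bounded on compact time intervals); and then the key Jensen inequality \eqref{calculkato2}, which is the only place convexity enters and which holds for every $T_s$ since it is convolution with a probability density. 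With these in hand the argument is a routine, if careful, telescoping estimate.
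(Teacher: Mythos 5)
Your proof is correct in outline, but it follows a genuinely different route from the paper's. The paper regularizes the mild solution by semigroup averaging, setting $v^\delta(t)=\dashint_0^\delta T_\tau v(t)\,d\tau$ and $h^\delta(t)=\dashint_0^\delta T_\tau h(t)\,d\tau$, invokes Pazy's theory to show $v^\delta$ is a \emph{classical} solution of the regularized problem, applies the infinitesimal (generator-level) Kato inequality obtained by dividing \eqref{calculkato2} by $s$ and letting $s\downarrow 0$, and then removes the two regularization parameters. You instead stay entirely at the integrated level: the restarted Duhamel identity, the pointwise convexity inequality $\varphi(a+b)\le\varphi(a)+\varphi'(a+b)\,b$ (which, note, makes your ``error'' term exactly zero if you evaluate $\varphi'$ at the endpoint $v(t+\tau)$), Jensen for the probability-kernel convolution $T_\tau$, and order preservation to telescope. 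This buys you a proof that never needs the solution to lie in $D_\gamma(A)$ or to be classical, and avoids the double limit $\eps,\delta\downarrow 0$; the price is the Riemann-sum convergence at the end, where you must be slightly careful that $\varphi'(v(s))h(s)$ need not belong to $X_\gamma$ when $\gamma>0$ (only $|\varphi'(v(s))h(s)|\le C|h(s)|$ holds, as the paper points out), so the passage to the limit in the sum $\sum_k T_{t-t_{k+1}}\bigl(\varphi'(v(t_{k+1}))\int_0^{\tau_k}T_{\tau_k-\sigma}h(t_k+\sigma)\,d\sigma\bigr)$ should be carried out pointwise in $x$ with a dominating bound $C(1+|x|^\gamma)$, using \eqref{boundgamma} and the compactness of $\{h(s):s\in[0,t]\}$ in $X_\gamma$ for uniformity. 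With that bookkeeping made explicit, your argument is a complete and in fact somewhat more elementary proof of \eqref{convexmild}.
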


\begin{remark}\label{integrable}
{\rm
When $\gamma=0$ and thus $X_0=\Cubn$, we have that $\varphi'(v) h\in C([0,T);\Cubn)$
(simply use that $\varphi'(v)$ is uniformly continuous since $v$ is bounded)
and \eqref{convexmild} is all understood in $\Cubn$. When $0<\gamma<2\alpha$, even
if $\varphi'$ is bounded,
$\varphi'(v) h$ might not verify \eqref{ucgamma} and hence not belong to $X_\gamma$. However, 
$|\varphi'(v) h|\leq C |h|$ for some constant $C$ and thus 
$$
T_{t-s}\, \{\varphi'(v(s))h(s)\}= \int_{\R^n} p(\cdot -y)\varphi'(v(s,y))h(s,y)\, dy
$$
is well defined since $|p(\cdot -y)\varphi'(v(s,y))h(s,y)|$ is integrable
in $y$; see \eqref{quasixgamma}. The remaining integral in $ds$ is also well defined.
}
\end{remark}

We need to establish the inequalities \eqref{convexmild} from the hypothesis
\begin{equation}\label{hypmild}
v(t) = T_t v_0 +\int_0^t T_{t-s}\, h(s) \, ds\quad\text{ for all } t\in [0,T].
\end{equation}
For this, as usual in Kato type inequalities, we need to regularize the weak (here mild) solution
in an appropriate way taking into account the operator $A$. Recall that, by \eqref{intdomain},
for all $w\in X_\gamma$ and $\delta>0$ we have
\begin{equation*}
w^\delta:=\dashint_0^\delta T_\tau w\, d\tau \, \in D_\gamma(A) \quad\text{ and }\quad 
A w^\delta= \frac{1}{\delta}(w-T_\delta w).
\end{equation*}
In addition, $w^\delta\to w$ in $X_\gamma$ as $\delta\downarrow 0$.

\begin{proof}[Proof of Proposition \ref{kato}]
We use the previous regularization to define, for every $t\in [0,T]$, the functions
$v^\delta(t):= (v(t))^\delta$ and $h^\delta(t):= (h(t))^\delta$. 
Note that $h^\delta \in C([0,T];X_\gamma)$,  $h^\delta(t) \in D_\gamma(A)$ for all $t\in [0,T]$, and 
$$
Ah^\delta=\frac{1}{\delta}(h-T_\delta h) \in C([0,T];X_\gamma)
\subset L^1([0,T];X_\gamma).
$$ 
Since in addition $v^\delta(0)\in D_\gamma(A)$,
Corollary~2.6 in section~4.2 of \cite{Pa} gives the existence of a classical solution $u$ to
\begin{equation}\label{problemreg}
\left \{ \begin{array}{rcll}
u_t + A u & = & h^\delta(t) &\  \ \text{in } (0,T), \\
u(0)& = & v^\delta(0); &
\end{array}\right.
\end{equation}
this is shown verifying that, under the above properties of $h^\delta$, the right hand side
of \eqref{hypmild}, with $h$ replaced by $h^\delta$ and $v_0$ by $v^\delta(0)$, is $C^1$ in $t$.
Thus, $u\in C([0,T];X_\gamma)\cap C^1([0,T);X_\gamma)$ satisfies $u(t)\in D_\gamma(A)$
for all $t\in [0,T)$ and \eqref{problemreg} is satisfied pointwise in $[0,T)$.
In particular, $u$ is the mild solution of \eqref{problemreg}. But applying
$\dashint_0^\delta d\tau\, T_\tau$ on equation \eqref{hypmild}, we see that 
$v^\delta$  is the mild solution of \eqref{problemreg}. Thus, $v^\delta=u$ solves
\eqref{problemreg} in the classical sense; in particular
\begin{equation}\label{eqdelta}
(v^\delta)_t + A v^\delta  =  h^\delta \  \ \text{in } (0,T). 
\end{equation}

Since $\varphi (v^\delta(t))\in X_\gamma$ for all $t$ (as discussed in Remark \ref{integrable}), 
we can define
\begin{equation}\label{eqeps}
\varphi^{\delta,\varepsilon}(t):=\{\varphi(v^\delta(t))\}^\varepsilon 
=\dashint_0^\varepsilon T_\tau\,  \varphi(v^\delta(t))\, d\tau
\end{equation}
for $\delta$ and $\varepsilon$ positive. We apply \eqref{calculkato2} with $v$ replaced 
by $v^\delta$ and obtain
\begin{equation*}
T_s\,\varphi(v^\delta)-\varphi(v^\delta) \geq 
\varphi'(v^\delta) \, (T_s v^\delta-v^\delta).
\end{equation*}
As pointed out in Remark \ref{integrable}, $\varphi'(v^\delta) \, (T_s v^\delta-v^\delta)$
could not belong to $X_\gamma$ when $\gamma>0$. However, its absolute value is bounded by
$C|T_s v^\delta-v^\delta|$, which satisfies \eqref{bddgamma} and thus we may act the
convolution semigroup on this function.
Applying $\dashint_0^\eps d\tau\, T_\tau$ on the previous inequality 
and dividing by $s$, we deduce
\begin{equation*}
\frac{T_s\varphi^{\delta,\eps}-\varphi^{\delta,\eps}}{s} \geq 
\dashint_0^\eps T_\tau\left\{\varphi'(v^\delta) \, \frac{T_s\,v^\delta-v^\delta}{s}\right\}
\,d\tau.
\end{equation*}
We now let $s\downarrow 0$ (also use that $\varphi'(v^\delta)$ is bounded and that
$(T_s\,v^\delta-v^\delta)/s$ converges in $X_\gamma$) to deduce
$$
A\varphi^{\delta,\eps}\leq 
\dashint_0^\eps T_\tau\left\{\varphi'(v^\delta) \,Av^\delta\right\}\,d\tau
=\dashint_0^\eps T_\tau\left\{\varphi'(v^\delta) \,(h^\delta-(v^\delta)_t)\right\}
\,d\tau ,
$$
where in the last equality we have used \eqref{eqdelta}.

Since $v^\delta(t)$ is differentiable in $t$, 
the right hand side of \eqref{eqeps} also is differentiable in $t$
and we have 
$(\varphi^{\delta,\eps})_t=\dashint_0^\eps T_\tau\left\{\varphi'(v^\delta) (v^\delta)_t
\right\}\,d\tau$.
Adding this to the previous inequality and defining 
\begin{equation}\label{newhomog}
(\varphi^{\delta,\eps})_t + A\varphi^{\delta,\eps}=: g_{\delta,\eps}, 
\end{equation}
we find
$$
(\varphi^{\delta,\eps})_t + A\varphi^{\delta,\eps}= g_{\delta,\eps}\leq 
\dashint_0^\eps T_\tau\left\{\varphi'(v^\delta) h^\delta\right\}
\,d\tau .
$$
Hence, since \eqref{newhomog} also holds in the mild sense, we have
\begin{equation*}
\begin{split}
\varphi(v^{\delta,\eps}(t)) &= T_t\,\varphi(v^{\delta,\eps}(0)) +
\int_0^t ds\, T_{t-s} g_{\delta,\eps}(s)\\
&\leq T_t\,\varphi(v^{\delta,\eps}(0)) +
\int_0^t ds\, T_{t-s} \dashint_0^\eps d\tau\,T_\tau
\{\varphi'(v^\delta(s))h^\delta(s)\} 
\end{split}
\end{equation*}
in $\R^n$ for all $t\in [0,T]$. Finally, since $\varphi'(v^\delta(s))h^\delta(s)\in C(\R^n)$,
letting $\eps\downarrow 0$ we deduce (pointwise in $\R^n$)
$$
\varphi(v^{\delta}(t)) \leq T_t\,\varphi(v^{\delta}(0)) +
\int_0^t ds\, T_{t-s} \{\varphi'(v^\delta(s))h^\delta(s)\}.
$$
Letting $\delta\downarrow 0$ and using dominated convergence, we conclude
$$
\varphi(v(t)) \leq T_t\,\varphi(v_0) +
\int_0^t ds\, T_{t-s} \{\varphi'(v(s))h(s)\}.
$$
This is the statement \eqref{convexmild} of the proposition.
\end{proof}

Using the proposition we can now prove the maximum principle for mild solutions.

\begin{proof}[Proof of Proposition \ref{nonlocMPmild}]
Let $\varphi:\R\to\R$ be a $C^1$ convex function such that 
$$
\varphi\equiv 0 \ \text{in } (-\infty,0),\ \varphi >0 \ \text{in } (0,+\infty),
\text{ and } 0 \leq \varphi'\leq 1 \text{ in } \R.
$$
For instance, we may take $\varphi\equiv 0$ in $(-\infty,0)$ and
$\varphi (u)= \frac{u^2}{u+1}$ in $[0,+\infty)$.

Since $v\in C([0,\infty);X_\gamma)$, $v$ is a continuous function in $[0,\infty)\times\R^n$.
Arguing by contradiction, assume that $v>0$ somewhere in $[0,T]\times\R^n$,
for some $T>0$. Let 
$$
w(t,x):=e^{-at}v(t,x), \ \text{ where $a$ is a constant such that }
a\geq \|c\|_{L^\infty((0,T)\times\R^n)}.
$$

Since  $v\in C([0,T);X_\gamma)$ is the mild solution of
$v_t+Av=h(t)$ in $[0,T]$, $v(0,\cdot)=v_0$, \eqref{newu}-\eqref{Nga}-\eqref{defga}
give that  $w\in C([0,T);X_\gamma)$ is the mild solution of
$w_t+Aw=e^{-at}\{ -av(t)+h(t)\}$ in $[0,T]$, $w(0,\cdot)=v_0$.

Therefore, by Proposition \ref{kato}, we have that
\begin{equation}\label{finalkato}
\varphi(w(t)) \leq T_t\,\varphi(v_0) +\int_0^t T_{t-s}\, 
\{\varphi'(w(s))e^{-as}(-av(s)+h(s))\} \, ds
\end{equation}
in $\R^n$ for all $t\in [0,T]$. But $v_0\leq 0$ by hypothesis i) in the proposition,
and thus $\varphi(v_0)\equiv 0$.
In addition, $\varphi'(w(s))(x)= 0$ whenever $w(s)(x)\leq 0$. If $w(s)(x)> 0$, 
then also $v(s)(x)>0$ and by hypothesis ii), we have $h(s,x)\leq c(s,x)v(s,x) \leq
av(s,x)$, and thus $-av(s,x)+h(s,x)\leq 0$. Finally, $\varphi'(w(s))\geq 0$ in all of $\R^n$.

We conclude that $\varphi'(w(s))e^{-as}(-av(s)+h(s))\leq 0$ in all of $\R^n$,
and by \eqref{finalkato} that $\varphi(w(t))\leq 0$ in $\R^n$ for all $t\in [0,T]$.
This leads to $w(t) \leq 0$, and thus  $v(t) \leq 0$ in $\R^n$ for all $t\in [0,T]$.
This contradicts our initial assumption: $v>0$ somewhere in $[0,T]\times\R^n$.
\end{proof}

\subsection{Bounds on the semigroup}

Next, a well-known simple lemma. For completeness, we include its proof below. 

\begin{lemma}\label{DECR}
Let $u\in L^{1}(\mathbb{R}^n)$ and $v\in L^{\infty}(\mathbb{R}^n)$ be positive, 
radially symmetric, and nonincreasing functions
in $\mathbb{R}^n$, where $u\in C^1$ has its radial derivative $u'\in L^{1}(\mathbb{R}^n)$.
Then, $u\ast v$ is 
also positive, radially symmetric, and nonincreasing. 
\end{lemma}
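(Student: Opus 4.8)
The plan is to prove Lemma~\ref{DECR} by reducing the multivariable statement to a one-dimensional rearrangement inequality via the layer-cake (distribution function) representation of the nonincreasing factor $v$. Since $v$ is positive, radially symmetric, and nonincreasing, we can write $v(y)=\int_0^\infty \chi_{\{v>\tau\}}(y)\,d\tau$, where each superlevel set $\{v>\tau\}$ is a ball $B_{\rho(\tau)}(0)$ (possibly empty or all of $\R^n$). By Tonelli's theorem, $(u\ast v)(x)=\int_0^\infty \big(u\ast \chi_{B_{\rho(\tau)}(0)}\big)(x)\,d\tau$, so it suffices to prove the lemma in the special case $v=\chi_{B_R(0)}$ for a fixed radius $R>0$: an integral (in $\tau$) of positive, radially symmetric, nonincreasing functions is again positive, radially symmetric, and nonincreasing.

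So the core step is to show that $g(x):=\int_{B_R(0)}u(x-y)\,dy$ is radially symmetric and nonincreasing in $|x|$; positivity is immediate since $u>0$. Radial symmetry follows from the radial symmetry of both $u$ and $B_R(0)$: for any rotation $\mathcal{R}$, the change of variables $y\mapsto \mathcal{R}y$ gives $g(\mathcal{R}x)=g(x)$. For monotonicity I would fix two points $x_1,x_2$ with $|x_1|<|x_2|$, and show $g(x_1)\ge g(x_2)$. The clean way is to use the hypothesis that $u\in C^1$ with radial derivative $u'\in L^1$: then $g$ is $C^1$ and differentiating under the integral sign, $\nabla g(x)=\int_{B_R(0)}\nabla u(x-y)\,dy$. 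Writing $r=|x|$ and $e=x/|x|$, one computes $\frac{d}{dr}g(re)=\int_{B_R(0)} e\cdot\nabla u(re-y)\,dy$. Using $\nabla u(z)=u'(|z|)\,z/|z|$ and exploiting the reflection symmetry of $B_R(0)$ about the hyperplane $e^\perp$, the contributions pair up so that the integrand can be rewritten, for $y$ and its reflection $y'$ across $e^\perp$, in a form that is manifestly $\le 0$ because $u'\le 0$ (which follows from $u$ positive, radial, nonincreasing) and because the point nearer to the "source region" dominates. Concretely, pairing $y$ with $2(y\cdot e)e - y$ reduces the derivative to an integral of $u'(\cdot)$ against a nonnegative weight times a nonnegative geometric factor, giving $\frac{d}{dr}g(re)\le 0$.

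An alternative, perhaps slicker, route avoids differentiation entirely: translate the convolution onto the characteristic function, i.e. $g(x)=\int_{\R^n}u(z)\chi_{B_R(x)}(z)\,dz=\int_{B_R(x)}u$, and then compare $\int_{B_R(x_1)}u$ with $\int_{B_R(x_2)}u$ when $|x_1|<|x_2|$. Choose the rotation $\mathcal{R}$ fixing the radial direction appropriately and use that $u$ radial nonincreasing means $u$ is "more concentrated near the origin"; then the reflection of $B_R(x_2)$ across the perpendicular bisector hyperplane of the segment $[x_1,x_2]$ sends it into a ball containing more of the mass of $u$, by Riesz-type reflection/polarization. This is exactly the standard polarization argument: reflecting $B_R(x_2)$ across the hyperplane equidistant from $0$ and which separates $x_1$ from $x_2$ exchanges mass in a way that can only increase $\int u$, and the reflected ball is $B_R(x_1')$ with $|x_1'|\le|x_1|$... one must be a little careful here, so I would likely present the first (differentiation) argument as the main proof and mention polarization only as a remark.

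The main obstacle is the monotonicity in the multidimensional setting: positivity and radial symmetry are essentially bookkeeping, but showing $g$ is nonincreasing requires genuinely using that the convolution of two radially nonincreasing functions is radially nonincreasing, which is a polarization/reflection phenomenon rather than something one sees by inspection. I expect the cleanest writeup uses the reduction to $v=\chi_{B_R}$ together with the reflection across the bisecting hyperplane: given $|x_1|<|x_2|$, let $H$ be the hyperplane through the midpoint of $x_1$ and $x_2$ perpendicular to $x_2-x_1$, let $\sigma_H$ be reflection across $H$; then $\sigma_H(B_R(x_2))=B_R(x_1)$, and splitting $\R^n$ into the half-space $H^+$ containing $x_1$ and its complement, one checks that $\int_{B_R(x_1)}u - \int_{B_R(x_2)}u = \int_{B_R(x_1)\cap H^+}\!\!\big(u - u\circ\sigma_H\big) + \int_{B_R(x_1)\setminus H^+}\!\!\big(u-u\circ\sigma_H\big)$, and on $H^+$ one has $|z|\le|\sigma_H(z)|$ so $u(z)\ge u(\sigma_H(z))$ by radial monotonicity, while off $H^+$ the set $B_R(x_1)\setminus H^+$ is mapped by $\sigma_H$ outside $B_R(x_1)$... this requires matching up the domains carefully, and that matching is the one delicate point I would take care with in the full proof.
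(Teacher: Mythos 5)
Your overall strategy (reduce to $v=\chi_{B_R(0)}$ by the layer-cake formula, then differentiate the convolution in the radial direction) is reasonable, but the key step --- the pairing that is supposed to make the radial derivative manifestly nonpositive --- does not work as described. The map $y\mapsto 2(y\cdot e)e-y$ fixes the component of $y$ along $e$ and flips the orthogonal component, so it leaves both $|re-y|$ and $e\cdot(re-y)=r-e\cdot y$ unchanged: the integrand is invariant under this pairing and nothing is gained. The genuine reflection across $e^{\perp}$, namely $\tilde y=y-2(y\cdot e)e$, fares no better: writing $s=e\cdot y\ge 0$, it pairs the term $u'(A)\frac{r-s}{A}$ with $u'(B)\frac{r+s}{B}$, where $A=|re-y|\le B=|re-\tilde y|$. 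When $s>r$ the first term is nonnegative and the second nonpositive, and since $u'$ is evaluated at the two \emph{different} radii $A$ and $B$ --- about whose relative sizes of $|u'|$ nothing is known, as $u'$ need not be monotone (take $u'$ large near radius $A$ and zero near radius $B$) --- the paired integrand can be strictly positive. The correct pairing, and the one the paper uses, is the point reflection through $x$ itself: substitute $z=x-y$, so that $\nabla g(x)\cdot x=\int u'(|z|)\frac{x\cdot z}{|z|}\,v(|x-z|)\,dz$, and pair $z$ with $-z$. Then $u'(|z|)$ is \emph{common} to both terms and factors out, the factor $\frac{x\cdot z}{|z|}$ changes sign, and the definite sign comes entirely from $v(|x-z|)-v(|x+z|)\ge 0$ on $\{x\cdot z\ge 0\}$, since $|x-z|\le|x+z|$ there. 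The paper runs this with a general radially nonincreasing $v$, which also makes your layer-cake reduction unnecessary.

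Your alternative polarization route is correct in principle and is in fact more general (it needs no differentiability of $u$ at all), but the decomposition you sketch --- splitting $B_R(x_1)$ into its intersections with $H^{+}$ and its complement --- is not the right one: on $B_R(x_1)\cap H^{-}$ one has $|z|\ge|\sigma_H(z)|$, hence $u\le u\circ\sigma_H$, which is the wrong sign. The repair is to subtract over the symmetric difference: write $\int_{B_R(x_1)}u-\int_{B_R(x_2)}u=\int_{B_R(x_1)\setminus B_R(x_2)}u-\int_{B_R(x_2)\setminus B_R(x_1)}u$, observe that $B_R(x_1)\setminus B_R(x_2)\subset H^{+}$ (any $z\in B_R(x_1)\cap H^{-}$ satisfies $|z-x_2|\le|z-x_1|\le R$ and so already lies in $B_R(x_2)$), that $\sigma_H$ maps $B_R(x_1)\setminus B_R(x_2)$ onto $B_R(x_2)\setminus B_R(x_1)$, and that $u\ge u\circ\sigma_H$ on $H^{+}$ because $0\in H^{+}$. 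Either repair yields a complete proof; as written, neither of your two arguments closes.
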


\begin{proof} 
Denote the convolution by
$$
w(x):=\int_{\mathbb{R}^n}u(|x-y|)v(|y|)\, dy,$$
clearly a positive and radially symmetric function. We compute
\begin{equation*}
\nabla w(x)\cdot x = \int_{\mathbb{R}^n} u'(|x-y|)\frac{(x-y)\cdot x}
{|x-y|} v(|y|)\, dy
=\int_{\mathbb{R}^n} u'(|z|)\frac{z\cdot x}{|z|}
v(|x-z|)\, dz.
\end{equation*}
In $\left\{x\cdot z\leq0\right\}$ we make the change $\xi=-z$ and obtain
$$
\int_{\left\{x\cdot z\leq0\right\}} u'(|z|)\frac{x\cdot z}
{|z|} v(|x-z|)\, dz = 
\int_{\left\{x\cdot \xi\geq0\right\}} u'(|\xi|)\frac{-x\cdot 
\xi}{|\xi|} 
v(|x+\xi|)\,d\xi.
$$
Thus,
$$
\nabla w(x)\cdot x = \int_{\left\{x\cdot z\geq0\right\}} u'(|z|)\frac{x\cdot 
z}{|z|} \left\{v(|x-z|)-v(|x+z|)\right\}\, dz. $$
We conclude noticing that the first factor is nonpositive, while the second and third 
are nonnegative since 
$v$ is radially nonincreasing 
and $|x-z|^2\leq|x+z|^2$ in the set $\{x\cdot z\geq0\}$.
\end{proof}

The next lemma will help us handle the $\Con$ initial data in Theorem~\ref{LIMR}.

\begin{lemma}\label{HR} 
Let $n\geq 1$, $\alpha\in(0,1)$, $\gamma\in (0,2\alpha)$,
and $p$ be a kernel satisfying \eqref{P1}-\eqref{P2}-\eqref{P3}.
Recall that $B$ is the constant in \eqref{P3}.
Then, for some positive constants $c$, $C$, $c_\gamma$, and  $C_\gamma$ depending only on
$n$, $\alpha$, and $B$, and also on $\gamma$ in the case of $c_\gamma$ and $C_\gamma$, we have:

{\rm a)} Let $a_0>0$, $r_0\geq 1$, and 
    \begin{equation*}
    v_0 (x)= \left \{ \begin{array}{lcl}
    a_0 |x|^{-n-2\alpha} & \text{ for} & |x|\geq r_0,\\
    a_0 r_0^{-n-2\alpha} & \text{ for} & |x|\leq r_0.
    \end{array}\right.
    \end{equation*}
Then, 
$$
T_t v_0(x)\leq C(1+r_0^{-2\alpha}t)a_0 |x|^{-n-2\alpha}
\quad\text{ for all } t>0,\, x\in\mathbb{R}^n,
$$
and
\begin{eqnarray*}
T_t v_0(x) &\geq & B^{-1} (q(t,\cdot)\ast v_0)(x)\\
& \geq &
c\, \frac{t}{t^{\frac{n}{2\alpha}+1}+1}a_0 
|x|^{-n-2\alpha} \quad\text{ if }  t>0,\, |x|\geq r_0,
\end{eqnarray*}
where $q$ is the function defined in \eqref{nameq}. 

{\rm b)} Let $w_\gamma(x)=|x|^{\gamma}$. Then, 
$$
T_t w_\gamma(x)\leq C_{\gamma}(|x|^{\gamma}+t^{\frac{\gamma}{2\alpha}})
\quad\text{ for all } t>0,\, 
x\in\mathbb{R}^n,
$$
and
$$
T_t w_\gamma(x)\geq c_{\gamma}|x|^{\gamma}\quad\text{ if } t>0,\, 
|x|\geq t^{\frac{1}{2\alpha}}.
$$
\end{lemma}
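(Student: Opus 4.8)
The plan is to reduce every estimate to a bound on one of the two convolutions $q(t,\cdot)\ast v_0$ and $q(t,\cdot)\ast w_\gamma$, where $q$ is the function in \eqref{nameq}. Indeed, \eqref{P3} is precisely the statement that $B^{-1}q(t,\cdot)\le p(t,\cdot)\le Bq(t,\cdot)$, so $B^{-1}(q(t,\cdot)\ast g)\le T_tg\le B(q(t,\cdot)\ast g)$ for every nonnegative $g$; this already gives the first and last inequalities in the two lower bounds. I would then lean on three elementary properties of $q$: it is radially nonincreasing in $x$; it rescales as $q(t,x)=t^{-n/(2\alpha)}q(1,t^{-1/(2\alpha)}x)$, whence $(q(t,\cdot)\ast g)(x)=\int_{\R^n}q(1,z)\,g(x-t^{1/(2\alpha)}z)\,dz$; and both $\int_{\R^n}q(1,z)\,dz$ and $\int_{\R^n}q(1,z)|z|^\gamma\,dz$ are finite, the latter exactly because $\gamma<2\alpha$ (the integrand decays like $|z|^{\gamma-n-2\alpha}$ at infinity).

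For the lower bounds I would restrict each convolution integral to a region on which \emph{both} factors are controlled from below. For $v_0$ with $|x|\ge r_0$, on $\{|y|\le|x|\}$ we have $|x-y|\le 2|x|$, and a short case distinction ($|x-y|\ge r_0$ or $|x-y|<r_0$, using $r_0\le|x|$) gives $v_0(x-y)\ge 2^{-n-2\alpha}a_0|x|^{-n-2\alpha}$ there; since $|x|\ge r_0\ge 1$, the remaining mass satisfies $\int_{|y|\le|x|}q(t,y)\,dy\ge\int_{|y|\le 1}q(t,y)\,dy\ge\omega_n\, t/(t^{n/(2\alpha)+1}+1)$, which is the claimed factor. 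For $w_\gamma$ with $|x|\ge t^{1/(2\alpha)}$, on $\{|y|\le|x|/2\}$ one has $|x-y|^\gamma\ge(|x|/2)^\gamma$, and after rescaling the remaining mass is $\int_{|z|\le|x|/(2t^{1/(2\alpha)})}q(1,z)\,dz\ge\int_{|z|\le 1/2}q(1,z)\,dz>0$, since $|x|\ge t^{1/(2\alpha)}$; this yields $c_\gamma|x|^\gamma$.

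For the upper bounds, the case $|x|\le r_0$ of part a) is trivial, as $T_t$ is an $L^\infty$-contraction and $\|v_0\|_\infty=a_0r_0^{-n-2\alpha}\le a_0|x|^{-n-2\alpha}$. For $|x|\ge r_0$ I would split $q(t,\cdot)\ast v_0$ at $|y|=|x|/2$: on $\{|y|\le|x|/2\}$ the bound $v_0(x-y)\le a_0(|x|/2)^{-n-2\alpha}$ (again by cases) together with $\int_{\R^n}q(t,\cdot)=\int_{\R^n}q(1,\cdot)$ gives a constant multiple of $a_0|x|^{-n-2\alpha}$, while on $\{|y|>|x|/2\}$ the monotonicity of $q(t,\cdot)$ gives $q(t,y)\le q(t,x/2)\le 2^{n+2\alpha}t|x|^{-n-2\alpha}$, and the leftover factor is $\|v_0\|_{L^1}$, which a direct radial integration shows equals a constant (in $n,\alpha$) times $a_0r_0^{-2\alpha}$; adding the two pieces yields $T_tv_0(x)\le C(1+r_0^{-2\alpha}t)a_0|x|^{-n-2\alpha}$. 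For $w_\gamma$, the inequality $|x-y|^\gamma\le C_\gamma(|x|^\gamma+|y|^\gamma)$ combined with the rescaling and the two integrability facts for $q(1,\cdot)$ immediately gives $(q(t,\cdot)\ast w_\gamma)(x)\le C_\gamma(|x|^\gamma+t^{\gamma/(2\alpha)})$.

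I do not expect a serious obstacle: once the convolution tails are under control, the rest is bookkeeping with explicit constants depending only on $n$, $\alpha$, $B$ (and $\gamma$). The two places where the hypotheses genuinely enter are the finiteness of the $\gamma$-th moment $\int_{\R^n}q(1,z)|z|^\gamma\,dz$, which forces $\gamma<2\alpha$, and the use of $r_0\ge 1$ in part a), both to pass from $\int_{|y|\le|x|}q(t,\cdot)$ to $\int_{|y|\le 1}q(t,\cdot)$ in the lower bound and to obtain $\|v_0\|_{L^1}\le C a_0r_0^{-2\alpha}$ in the upper bound.
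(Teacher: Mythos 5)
Your proposal is correct and follows essentially the same route as the paper: reduce to the kernel $q$ via \eqref{P3}, prove the lower bounds by restricting the convolution to a region where both factors are bounded below (a ball of fixed radius for $v_0$, the set $\{|y|\le|x|/2\}$ for $w_\gamma$), and prove the upper bounds by splitting at $|x|/2$ and using $\|v_0\|_{L^1}\le Ca_0r_0^{-2\alpha}$ together with the finiteness of $\int q(1,z)(1+|z|^\gamma)\,dz$ for $\gamma<2\alpha$. The only differences are cosmetic (which factor of the convolution carries the pointwise bound on each piece of the split), and all the constants come out as claimed.
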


\begin{proof}
We start proving a). The quantity
  $T_tv_0(x)$ is comparable, up to multiplicative constants, to the integral
    \begin{equation}\label{intalpha}
 I:=   \int_{\mathbb{R}^n}\frac{t^{-\frac{n}{2\alpha}}}{1+(t^{-\frac{1}{2\alpha}}|x-y|)^
{n+2\alpha}}\, v_0(y)\, dy.
    \end{equation}
   We start with the upper bound. In \eqref{intalpha} we integrate first in 
$B_{|x|/2}(0)$ and then in $\mathbb{R}^n\setminus B_{|x|/2}(0)$.
    In $B_{|x|/2}(0)$, we have $|x-y|\geq |x|-
|y|\geq |x|/2$, and thus the integral is bounded above by
    $$I_1:= \frac{t^{-\frac{n}{2\alpha}}}
    {1+(t^{-\frac{1}{2\alpha}} |x|/2)^{n+2\alpha}} 
\int_{B_{|x|/2}(0)}\,v_0(y)\,dy.$$
    Now, $\int_{B_{r_0}(0)} v_0(y)\, dy = a_0\,r_0^{-n-2\alpha}Cr_0^n=Ca_0\,r_0^{-2\alpha}$.
In case $r_0<|x|/2$, the remaining term in the integral 
over $B_{|x|/2}(0)$ is also estimated by
    $$\int_{B_{|x|/2}(0)\setminus B_{r_0}(0)}v_0(y)\, dy = C\int_{r_0}^{|x|/2}a_0
\,r^{-n-2\alpha}r^{n-1}dr\leq  Ca_0\,r_0^{-2\alpha}.
$$
Hence, 
  \begin{equation}\label{firstpart}
 I_1\leq \frac{t^{-\frac{n}{2\alpha}}}{(t^{-\frac{1}{2\alpha}} |x|/2)^
{n+2\alpha}}Ca_0\,r_0^{-2\alpha}
  =Ct\,r_0^{-2\alpha}a_0|x|^{-n-2\alpha}.
  \end{equation}
  For the integrand in \eqref{intalpha} over 
$\mathbb{R}^n\backslash B_{|x|/2}(0)$, note that $v_0(y)\leq 
a_0(|x|/2)^{-n-2\alpha}$ in this set. Thus, the integral over this set is bounded above by
  $$Ca_0|x|^{-n-2\alpha}\int_{\mathbb{R}^n}\frac{t^{-\frac{n}{2\alpha}}}
{1+(t^{-\frac{1}{2\alpha}} |x-y|)^{n+2\alpha}}\, dy=Ca_0|x|^{-n-2\alpha}
\int_{\mathbb{R}^n}\frac{d\overline{y}}{(1+|\overline{y}|)^{n+2\alpha}}.$$
Therefore, we have the upper bound $Ca_0|x|^{-n-2\alpha}$. 

Putting this together with \eqref{firstpart}, we conclude  
  $$T_tv_0(x)\leq C(1+r_0^{-2\alpha}t)a_0|x|^{-n-2\alpha}.$$
  
Next, we show the lower bound. We assume $|x|\geq r_0\geq1$. We have
  $$T_tv_0(x)\geq B^{-1} \int_{B_1(x)}\frac{t^{-\frac{n}{2\alpha}}}{1+(t^{-\frac{1}{2\alpha}} |x-y|)^
{n+2\alpha}}\,v_0 (y) \, dy.$$
In the set of integration $|y|\leq |x|+1\leq |x|+r_0\leq 
2|x|$, and thus 
  $v_0(y)\geq v_0(2|x|)=a_0(2|x|)^{-n-2\alpha}$. Finally, since
  $$\int_{B_1(0)}\frac{t}{t^{\frac{n}{2\alpha}+1}+|z|^{n+2\alpha}}\,dz
  \geq \frac{t}{t^{\frac{n}{2\alpha}+1}+1}\int_{B_1(0)}dz,$$
we conclude the statement in the lemma.
  
We now prove part b). The quantity $T_tw_\gamma(x)$ is comparable to 
  \begin{equation}\label{intgamma}
  \int_{\mathbb{R}^n}\frac{t^{-\frac{n}{2\alpha}}}{1+(t^{-\frac{1}{2\alpha}}|x-y|)^
{n+2\alpha}}|y|^{\gamma}\, dy.
  \end{equation} 
  For the upper bound, we make the change of variables $\overline{y}=t^{-\frac{1}{2\alpha}}(x-y)$
and notice that 
$|y|^{\gamma}
  \leq (|x|+t^{\frac{1}{2\alpha}}|\overline{y}|)^{\gamma}$. Thus 
\eqref{intgamma} is smaller than 
  $$C_\gamma \int_{\mathbb{R}^n}\frac{1}{1+|\overline{y}|^{n+2\alpha}}(|x|^{\gamma}+t^
{\frac{\gamma}{2\alpha}}|\overline{y}|^{\gamma})\, d\overline{y}\leq
  C_\gamma (|x|^{\gamma}+t^{\frac{\gamma}{2\alpha}})$$ 
  since $\gamma<2\alpha$. 

For the lower bound, we 
  assume $|x|\geq t^{\frac{1}{2\alpha}}$. We estimate \eqref{intgamma} from below by the 
same 
integral in $y\in B_{|x|/2}(x)$. Here, 
  $|y|\geq |x|-|x|/2=|x|/2$. 
Making  the change of variables $\overline{y}=t^{-\frac{1}{2\alpha}}(x-y)$, 
  we minorize \eqref{intgamma} by
  $$\left(|x|/2\right)^{\gamma}\int_{\{|\overline{y}|<t^{-\frac{1}{2\alpha}}|x|/2\}}
\frac{d\overline{y}}{1+|\overline{y}|^{n+2\alpha}}.$$
  Since $|x|\geq t^{\frac{1}{2\alpha}}$ by hypothesis, the last integral is 
larger or equal than a positive constant.
\end{proof}

The previous lemma has the following counterpart for nondecreasing
initial data in $\R$.

\begin{lemma}\label{HI}
Let $n= 1$, $\alpha\in(0,1)$, $\gamma\in (0,2\alpha)$,
and $p$ be a kernel satisfying \eqref{P1}-\eqref{P2}-\eqref{P3}.
Recall that $B$ is the constant in \eqref{P3}.
Then, for some positive constants $c$, $C$, $c_\gamma$, and $C_\gamma$ depending only on
$\alpha$ and $B$, and also on $\gamma$ in the case of $c_\gamma$ and $C_\gamma$, we have:

{\rm a)} Let $a_0>0$ and $x_0\leq-1$. Let 
    \begin{equation*} 
    V_0(x)=\left \{ \begin{array}{lll}
    a_0 |x|^{-2\alpha}   &\text{ for} &x\leq x_0, \\
    a_0 |x_0|^{-2\alpha} &\text{ for} &x\geq x_0.
    \end{array}\right.
    \end{equation*}
    Then,
    $$T_t V_0(x)\leq C (1+|x_0|^
{-2\alpha}t)a_0|x|^{-2\alpha} \quad\text{ if } t>0,\, x<2x_0,$$
and
$$T_t V_0(x)\geq  c\di\frac{t}{t^{\frac{1}{2\alpha}+1}+1}
a_0|x|^{-2\alpha} \quad\text{ if }  t>0,\, x<x_0.$$

{\rm b)}   Let  $W_\gamma(x)=(x_-)^{\gamma}$, where $x_-$ denotes the negative 
part of $x$. Then,
    $$c_{\gamma}(|x|^{\gamma}+t^{\frac{\gamma}{2\alpha}})
\leq T_t W_\gamma(x) \leq C_{\gamma} (|x|^{\gamma}+t^{\frac{\gamma}{2\alpha}}) \quad\text{ if }
 t>0,\, x<0.$$
\end{lemma}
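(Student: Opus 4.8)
The plan is to follow the proof of Lemma~\ref{HR} almost verbatim; the one genuinely new feature is that $V_0$ does not decay at $+\infty$ — it carries an infinite plateau of height $a_0|x_0|^{-2\alpha}$ — and its tail $a_0|y|^{-2\alpha}$ need not be integrable at $-\infty$ when $2\alpha\le 1$. Both issues I would handle by exploiting the algebraic decay of the kernel at the appropriate scale rather than by crude integral bounds. As in Lemma~\ref{HR}, by \eqref{P3} the quantities $T_tV_0(x)$ and $T_tW_\gamma(x)$ are comparable, up to the factor $B^{\pm1}$, to the convolutions of $V_0$ and $W_\gamma$ with $q(t,z)=t/(t^{\frac1{2\alpha}+1}+|z|^{1+2\alpha})$ (this is \eqref{nameq} with $n=1$), and I would argue with these integrals throughout.

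For the upper bound in a) I would fix $x<2x_0$, so $|x|>2|x_0|$, and split the convolution integral into the three regions $\{y\ge x_0\}$, $\{x/2<y<x_0\}$ and $\{y\le x/2\}$ (note $x/2<x_0$). On the first two, $|x-y|=y-x\ge|x|-|x_0|\ge|x|/2$, so $q(t,x-y)\le t\,(y-x)^{-1-2\alpha}$: on $\{y\ge x_0\}$ the integrand equals $a_0|x_0|^{-2\alpha}q(t,x-y)$ and integrating in $y$ yields $\le C\,t\,a_0|x_0|^{-2\alpha}|x|^{-2\alpha}$, and on $\{x/2<y<x_0\}$ one has $|y|^{-2\alpha}\le|x_0|^{-2\alpha}$ and interval length $<|x|/2$, giving the same bound. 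On $\{y\le x/2\}$, $|y|\ge|x|/2$ so $V_0(y)\le 2^{2\alpha}a_0|x|^{-2\alpha}$, while $\int_{\R}q(t,z)\,dz$ is a finite constant, hence this piece is $\le C\,a_0|x|^{-2\alpha}$; adding the three pieces gives the claimed estimate. For the lower bound in a) I would fix $x<x_0\le-1$ and restrict to $y\in B_1(x)$: there $q(t,x-y)\ge t/(t^{\frac1{2\alpha}+1}+1)$ since $|x-y|\le1$, and since $V_0$ is nondecreasing, $V_0(y)\ge V_0(x-1)=a_0(|x|+1)^{-2\alpha}\ge 2^{-2\alpha}a_0|x|^{-2\alpha}$ (using $|x|\ge1$); since $B_1(x)$ has length $2$, this gives the bound.

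For b) the upper bound is exactly as in Lemma~\ref{HR}(b): after the change of variables $\bar y=t^{-\frac1{2\alpha}}(x-y)$ one uses $(y_-)^\gamma\le|y|^\gamma\le C_\gamma(|x|^\gamma+t^{\frac\gamma{2\alpha}}|\bar y|^\gamma)$ together with the integrability of $|\bar y|^\gamma(1+|\bar y|^{1+2\alpha})^{-1}$, which is where $\gamma<2\alpha$ is used. For the lower bound at $x<0$ I would combine two restrictions of the convolution integral. First, when $|x|\ge t^{\frac1{2\alpha}}$, restricting to $y\in B_{|x|/2}(x)\subset(-\infty,0)$ gives $(y_-)^\gamma=|y|^\gamma\ge(|x|/2)^\gamma$, and after rescaling $\int_{B_{|x|/2}(x)}q(t,x-y)\,dy=\int_{\{|\bar y|<t^{-\frac1{2\alpha}}|x|/2\}}(1+|\bar y|^{1+2\alpha})^{-1}\,d\bar y\ge\int_{\{|\bar y|<1/2\}}(1+|\bar y|^{1+2\alpha})^{-1}\,d\bar y>0$, whence $T_tW_\gamma(x)\ge c_\gamma|x|^\gamma$. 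Second, for arbitrary $x<0$, restricting to the interval $(x-3t^{\frac1{2\alpha}},x-t^{\frac1{2\alpha}})$, which has length $2t^{\frac1{2\alpha}}$ and lies in $(-\infty,-t^{\frac1{2\alpha}})$, gives $(y_-)^\gamma\ge t^{\frac\gamma{2\alpha}}$ and $q(t,x-y)\ge t^{-\frac1{2\alpha}}/(1+3^{1+2\alpha})$, so $T_tW_\gamma(x)\ge c_\gamma t^{\frac\gamma{2\alpha}}$. I would then combine the two according to whether $|x|\ge t^{\frac1{2\alpha}}$ (so $|x|^\gamma$ dominates $t^{\frac\gamma{2\alpha}}$) or $|x|<t^{\frac1{2\alpha}}$ (so the reverse holds), obtaining $T_tW_\gamma(x)\ge c_\gamma(|x|^\gamma+t^{\frac\gamma{2\alpha}})$ for all $x<0$.

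The only real obstacle is the non-decay of $V_0$ at $+\infty$ in part a), and the way around it is precisely the splitting at $y=x/2$: on $\{y>x/2\}$ the kernel is of size $t|x|^{-1-2\alpha}$ and the relevant interval lengths are controlled by $|x|$, so the plateau and the slowly decaying tail contribute only the $|x_0|^{-2\alpha}t$ term, while on $\{y\le x/2\}$ the profile $V_0$ is already as small as $a_0|x|^{-2\alpha}$. Everything else reduces to elementary one-dimensional estimates, and part b) mirrors Lemma~\ref{HR}(b), with the minor bonus that the one-sided profile $W_\gamma$ yields the full lower bound without the restriction $|x|\ge t^{\frac1{2\alpha}}$ that the even profile $w_\gamma$ required.
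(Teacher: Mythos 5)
Your proof is correct and follows essentially the same route as the paper: reduce $T_t$ to convolution with $q$ via \eqref{P3}, then split the integration domain and use elementary one-dimensional estimates (the paper splits only at $y=x/2$ in the upper bound of a), and in the lower bound of b) uses the single interval $(x-(2t)^{\frac{1}{2\alpha}},x-t^{\frac{1}{2\alpha}})$ together with $|x-t^{\frac{1}{2\alpha}}|=|x|+t^{\frac{1}{2\alpha}}$ instead of your two-case combination, but these are only cosmetic differences).
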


\begin{proof}

We start proving a). First, the upper bound.  Consider $x<2x_0<0$, then
\begin{eqnarray*} 
T_tV_0(x)&\leq& B\int^{+\infty}_{-\infty}\frac{t^{-\frac{1}{2\alpha}}}
{1+(t^{-\frac{1}{2\alpha}} |x-y|)^{1+2\alpha}}V_0(y)\, dy\\
&=& B \int^{x/2}_{-\infty}\frac{t^{-\frac{1}{2\alpha}}}
{1+(t^{-\frac{1}{2\alpha}}|x-y|)^{1+2\alpha}}V_0(y)\, dy+ 
\\
&& \hspace{1cm}
 + B\int^{+\infty}_{x/2}\frac{t^{-\frac{1}{2\alpha}}}{1+(t^{-\frac{1}{2\alpha}}|x-y|)^
{1+2\alpha}}V_0(y)\,dy\\
&\leq & Ca_0|x/2|^{-2\alpha}
\int^{x/2}_{-\infty}\frac{t^{-\frac{1}{2\alpha}}\,dy}{1+(t^{-\frac{1}{2\alpha}}|x-y|)
^{1+2\alpha}}+
\\
&& \hspace{1cm}
+C a_0\vert x_0\vert^{-2\alpha}\int^{+\infty}_{x/2}
\frac{t^{-\frac{1}{2\alpha}}\,dy}{1+(t^{-\frac{1}{2\alpha}}|x-y|)^
{1+2\alpha}}.
\end{eqnarray*}
We conclude by noticing that
$$\int_{-\infty}^{x/2}\frac{t^{-\frac{1}{2\alpha}}\, dy}{1+(t^{-\frac{1}{2\alpha}}|x-y|)
^{1+2\alpha}}\leq \int_{-\infty}^{+\infty}
\frac{d\overline{y}}{1+\overline{y}^{1+2\alpha}}=C
$$
and
$$\int^{+\infty}_{x/2}\frac{t^{-\frac{1}{2\alpha}}\, dy}{1+(t^{-\frac{1}{2\alpha}}|x-y|)
^{1+2\alpha}}=\int^{+\infty}_{-t^{-1/(2\alpha)}x/2}
\frac{d\overline{y}}{1+\overline{y}^{1+2\alpha}}\leq Ct|x|^{-2\alpha}.
$$

Next, the lower bound. Since $x<x_0\leq -1$, we have
\begin{eqnarray*} 
T_tV_0(x)
&\geq&B^{-1}\int^{x}_{x-1}\frac{t^{-\frac{1}{2\alpha}}}
{1+(t^{-\frac{1}{2\alpha}}|x-y|)^{1+2\alpha}}\frac{a_0}
{|y|^{2\alpha}}\,dy\\
&\geq&B^{-1}\frac{a_0}{(2|x|)^{2\alpha}}\int^{1}_{0}\frac{t^{-\frac{1}{2\alpha}}}
{1+(t^{-\frac{1}{2\alpha}}z)^{1+2\alpha}}\, dz,
\end{eqnarray*}
where we have used $|y|=-y\leq1-x\leq-x-x=-2x=2|x|$ 
in the last bound. Finally, using $0\leq z\leq 1$ in the last integral,
we conclude the lower bound.

We now prove b). The upper bound is a consequence of the upper
bound in part~b) of Lemma~\ref{HR}. For the lower bound, since $x<0$ note that 
\begin{eqnarray*} 
T_tW_\gamma(x)&\geq& B^{-1}\int^{0}_{-\infty}\frac{t^{-\frac{1}{2\alpha}}}{1+(t^{-\frac{1}{2\alpha}}|x-y|)
^{1+2\alpha}}|y|^{\gamma}\,dy\\
&\geq &
 B^{-1}\int^{x-t^{\frac{1}{2\alpha}}}_{x-(2t)^{\frac{1}{2\alpha}}}
\frac{t^{-\frac{1}{2\alpha}}}{1+(t^{-\frac{1}{2\alpha}}|x-y|)
^{1+2\alpha}}|y|^{\gamma}\,dy\\
&\geq &
c_\gamma\int^{x-t^{\frac{1}{2\alpha}}}_{x-(2t)^{\frac{1}{2\alpha}}}
t^{-\frac{1}{2\alpha}}|y|^{\gamma}\,dy
\geq c_{\gamma} |x-t^{\frac{1}{2\alpha}}|^\gamma
\geq c_{\gamma}(|x|^{\gamma}+t^{\frac{\gamma}{2\alpha}})
\end{eqnarray*}
since $|x-t^{\frac{1}{2\alpha}}|=|x|+t^{\frac{1}{2\alpha}}$. This concludes the proof.
\end{proof}

\section{Initial data with compact support}

To prove part b) (the convergence towards $1$) of Theorem \ref{LIMR}, 
we will need the following key lemma.

\begin{lemma}\label{EXPR}
Let $n\geq 1$, $\alpha\in(0,1)$, $f$ satisfy \eqref{nonl},
and $p$ be a kernel satisfying \eqref{P1}-\eqref{P2}-\eqref{P3}.
Recall that $B$ is the constant in \eqref{P3}. Then, for every 
$0<\sigma<\frac{f'(0)}{n+2\alpha}$, 
there exist $t_0\geq1$ and $0<\varepsilon_0 <1$ depending only on $n$, 
$\alpha$, 
$B$, $f$, and $\sigma$, for which the following holds. 

Given $r_0\geq 1$ and $0<\varepsilon\leq\varepsilon_0$, let $a_0>0$ be defined by 
$a_0r_0^{-n-2\alpha}=\varepsilon$ and let
\begin{equation*} 
v_0(x)=\left \{ \begin{array}{lll}
a_0 |x|^{-n-2\alpha}   &\text{for} &|x|\geq r_0, \\
\varepsilon = a_0 r_0^{-n-2\alpha} &\text{for} &|x|\leq r_0.
\end{array}\right.
\end{equation*}
Then, the mild solution $v$ of \eqref{N} with initial condition $v_0$ satisfies
$$v(kt_0,x)\geq \varepsilon \ \ \ \text{for }|x|\leq r_0e^{\sigma kt_0}$$
and $k\in\left\{0,1,2,3,\ldots\right\}$.
\end{lemma}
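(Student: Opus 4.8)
The plan is to construct an explicit subsolution of \eqref{N} on a finite time interval $[0,t_0]$ using the concavity of $f$, then iterate. Fix $\sigma<\sigma_\ast=f'(0)/(n+2\alpha)$ and choose $\delta\in(0,1)$ so small that, by continuity of $f'$ at $0$ and concavity, $f(u)\geq \frac{f(\delta)}{\delta}u\geq \sigma'(n+2\alpha)\,u$ for all $u\in[0,\delta]$, where $\sigma<\sigma'<\sigma_\ast$; concavity gives $\frac{f(\delta)}{\delta}u\le f(u)$ on $[0,\delta]$ because the chord from $(0,0)$ to $(\delta,f(\delta))$ lies below the graph. Set $\varepsilon_0:=\delta$ (or a fixed fraction of it). Then for any $\varepsilon\le\varepsilon_0$ the linear problem $\underline v_t+A\underline v=\frac{f(\delta)}{\delta}\underline v$ with initial datum $v_0$ as in the statement (which satisfies $0\le v_0\le\varepsilon\le\delta$) produces, as long as $\underline v$ stays below $\delta$, a genuine subsolution of \eqref{N} by the comparison principle \eqref{compar} applied with $f_1(u)=\frac{f(\delta)}{\delta}u$ and $f_2=f$. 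By the computation following \eqref{newu}, this linear flow is $\underline v(t,x)=e^{\frac{f(\delta)}{\delta}t}\,T_tv_0(x)$.

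Next I would use the bounds of Lemma~\ref{HR}a) on $T_tv_0$. The lower bound gives, for $|x|\ge r_0$ and $t\in[0,t_0]$ with $t_0$ fixed,
\[
\underline v(t,x)\ \ge\ e^{\frac{f(\delta)}{\delta}t}\,c\,\frac{t}{t^{\frac{n}{2\alpha}+1}+1}\,a_0|x|^{-n-2\alpha}
\ \ge\ c(t_0)\,e^{\sigma'(n+2\alpha)t}\,a_0|x|^{-n-2\alpha},
\]
since on $[0,t_0]$ the rational factor $t/(t^{n/2\alpha+1}+1)$ is bounded below by a positive constant depending only on $t_0$, and $\frac{f(\delta)}{\delta}\ge\sigma'(n+2\alpha)$. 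Recalling $a_0r_0^{-n-2\alpha}=\varepsilon$, this reads $\underline v(t,x)\ge c(t_0)\,e^{\sigma'(n+2\alpha)t}\,\varepsilon\,(r_0/|x|)^{n+2\alpha}$. At $t=t_0$ and $|x|=r_0e^{\sigma t_0}$ we get $\underline v(t_0,x)\ge c(t_0)\,e^{\sigma'(n+2\alpha)t_0}\,\varepsilon\,e^{-\sigma(n+2\alpha)t_0}=c(t_0)\,\varepsilon\,e^{(\sigma'-\sigma)(n+2\alpha)t_0}$. Since $\sigma'>\sigma$, I now \emph{choose $t_0$ large enough}, depending only on $n,\alpha,B,f,\sigma$, so that $c(t_0)e^{(\sigma'-\sigma)(n+2\alpha)t_0}\ge 1$; then $\underline v(t_0,x)\ge\varepsilon$ on $|x|\le r_0e^{\sigma t_0}$ (monotonicity in $|x|$ of the bound handles $|x|\le r_0$ too, using the upper bound only to check $\underline v\le\delta$ so the subsolution property is not lost — here the upper bound $T_tv_0\le C(1+r_0^{-2\alpha}t)a_0|x|^{-n-2\alpha}$ combined with $r_0\ge1$ and $a_0r_0^{-n-2\alpha}=\varepsilon\le\varepsilon_0$ keeps $\underline v$ small provided $\varepsilon_0$ is chosen small relative to $\delta$, possibly shrinking $\varepsilon_0$ after $t_0$ is fixed, or rather fixing $t_0$ first then $\varepsilon_0$).

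The iteration is then immediate. Having shown $v(t_0,x)\ge\varepsilon$ for $|x|\le r_0e^{\sigma t_0}=:r_1$, I note that $v(t_0,\cdot)$ dominates the function $v_0^{(1)}$ built from $r_1$ exactly as $v_0$ was built from $r_0$ — this requires checking $v(t_0,x)\ge a_1|x|^{-n-2\alpha}$ for $|x|\ge r_1$ with $a_1r_1^{-n-2\alpha}=\varepsilon$, which follows from the same Lemma~\ref{HR}a) lower bound and the choice of $t_0$ (the power-law tail is exactly reproduced, with the correct coefficient, by the displayed inequality above evaluated for general $|x|\ge r_1$). By the comparison principle for \eqref{N} and the semigroup property in time, $v(2t_0,\cdot)\ge$ the flow starting from $v_0^{(1)}$, so $v(2t_0,x)\ge\varepsilon$ for $|x|\le r_1e^{\sigma t_0}=r_0e^{2\sigma t_0}$, and inductively $v(kt_0,x)\ge\varepsilon$ for $|x|\le r_0e^{\sigma kt_0}$. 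The main obstacle is the bookkeeping that makes the construction \emph{self-reproducing}: one must verify that after one step the solution still lies below the subsolution-threshold $\delta$ \emph{and} still dominates a profile of the same algebraic type with the same $\varepsilon$ and an enlarged $r$, which forces the particular normalization $a_0r_0^{-n-2\alpha}=\varepsilon$ and a careful order of quantifiers — fix $\sigma,\delta,\sigma'$, then $t_0$, then $\varepsilon_0$ — so that all constants are uniform in $r_0$ and in the iteration index $k$.
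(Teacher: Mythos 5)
Your proposal follows essentially the same route as the paper's proof: the linear subsolution $e^{(f(\delta)/\delta)t}\,T_tv_0$ kept below $\delta$ on $[0,t_0]$ by fixing $\varepsilon_0$ \emph{after} $t_0$, the lower bound of Lemma~\ref{HR}a), the choice of $t_0$ so that the tail coefficient grows by at least $e^{\sigma(n+2\alpha)t_0}$ per step, and the iteration with the rescaled profile $(r_1,a_1)$. The one place to tighten is the inner region $|x|\le r_0$, where the Lemma~\ref{HR}a) lower bound is not available and "monotonicity of the bound" is not quite the right object; the paper handles this by invoking Lemma~\ref{DECR} to see that the subsolution $q(t,\cdot)\ast v_0$ is itself radially nonincreasing, so its value for $|x|\le r_1$ is at least its value at $|x|=r_1$, which is $\ge\varepsilon$.
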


\begin{proof}
The lemma being of course true for $k=0$, let us prove it for $k=1$.  
Let $\delta\in(0,1)$ be sufficiently small such that
\begin{equation}\label{deltaR}
\sigma<\frac{1}{2}\left(\sigma+\frac{f'(0)}{n+2\alpha}\right)<\frac{1}{n+2\alpha}
\frac{f(\delta)}{\delta}<\frac{f'(0)}{n+2\alpha}.
\end{equation}
We take $t_0\geq 1$ sufficiently large, 
depending only on 
$n$, $\alpha$, $B$, $f$ and $\sigma$, such that
\begin{equation}\label{deft0}
\left(c\frac{t_0}{t_0^{\frac{n}{2\alpha}+1}+1}\right)^\frac{1}{n+2\alpha} 
e^{\frac{1}{2}\left(\sigma+\frac{f'(0)}{n+2\alpha}\right)t_0}
\geq e^{\sigma t_0},
\end{equation}
where $c>0$ is the constant in the lower bound in part~a) of Lemma~\ref{HR}.
In particular, $c$ depends only on $n$, $\alpha$, and $B$.
Define now $0<\varepsilon_0<\delta$ by
\begin{equation*}
\varepsilon_0 = \delta e^{-f'(0)t_0}.
\end{equation*}

Recall that, in what follows, we are given $r_0\geq 1$ and $\varepsilon$ such that
$$
0<\varepsilon\leq\varepsilon_0<\delta.
$$
Let  
$$
w:=e^{(f(\delta)/\delta)t}\, T_tv_0.
$$ 
It satisfies 
$$
w_t+Aw=\frac{f(\delta)}{\delta}w,\ \ w(0,\cdot)=v_0
$$
in the mild sense. Since $v_0\leq\varepsilon$ in $\R^n$, we also have
$T_tv_0\leq\varepsilon$ in $\R^n$ for all $t>0$.
Now, for $t\leq t_0$, $0\leq w \leq e^{(f(\delta)/\delta)t_0}\varepsilon\leq 
e^{f'(0)t_0}\varepsilon_0=\delta$.
Since $(f(\delta)/\delta)w\leq f(w)$ for $0\leq w\leq\delta$, we have that $w$ 
is a mild subsolution of \eqref{N} in $\left[0,t_0\right]\times\mathbb{R}^n$. 
Thus, by the comparison principle of subsection~2.3, we have
\begin{equation}\label{allRn}
v(t_0,\cdot)\geq w(t_0,\cdot)\geq \underline{w}(t_0,\cdot)
\ \ \text{in}\ \mathbb{R}^n,
\end{equation}
where 
$$
\underline{w}(t,x):=B^{-1}e^{(f(\delta)/\delta)t} (q(t,\cdot)\ast v_0)(x)
$$
and $q$ was defined in \eqref{nameq}. We will use that 
$\underline{w}(t,\cdot)$ is radially nonincreasing by Lemma \ref{DECR}.

By the lower bound in part a) of Lemma \ref{HR}, we have 
\begin{equation}\label{powR}
v(t_0,x)\geq w(t_0,x)\geq \underline{w}(t_0,x)
\geq e^{(f(\delta)/\delta)t_0}c\frac{t_0}{t_0^{\frac{n}{2\alpha}+1}+1}a_0|x|^{-n-2\alpha}
\ \ \text{ for}\ |x|\geq r_0. 
\end{equation}
Let us define $r_1>0$ by 
\begin{equation}\label{defx1R}
e^{(f(\delta)/\delta)t_0}c\,\frac{t_0}{t_0^{\frac{n}{2\alpha}+1}+1}\,\frac{a_0}
{r_1^{n+2\alpha}}=\varepsilon.
\end{equation}
Since $a_0=\varepsilon r_0^{n+2\alpha}$, we get
$$r_1=r_0\left(c\frac{t_0}{t_0^{\frac{n}{2\alpha}+1}+1}\right)^\frac{1}{n+2\alpha} 
e^{\frac{1}{n+2\alpha}(f(\delta)/\delta)t_0}.$$
By \eqref{deft0} and the second inequality in \eqref{deltaR}, we have
\begin{equation}\label{bddx1R}
r_1\geq r_0 e^{\sigma t_0}>r_0.
\end{equation}

Now, since $r_1>r_0$, \eqref{powR} and \eqref{defx1R} lead to $v(t_0,x)\geq\underline{w}
(t_0,x)\geq a_1|x|^{-n-2\alpha}$ for $|x|\geq r_1$, where $a_1:=\varepsilon r_1^{n+2\alpha}$. 
Since $\underline{w}$ is radially nondecreasing by 
Lemma \ref{DECR}, \eqref{allRn}-\eqref{powR}-\eqref{defx1R} lead to $v(t_0,x)\geq\underline{w}(t_0,x)\geq 
\underline{w}(t_0,r_1)\geq \varepsilon$ 
for $|x|\leq r_1$. 

Thus, $v(t_0,\cdot)\geq {v}_1$ where ${v}_1$ 
is given by the expression for $v_0$ in the statement of the lemma with 
$(r_0,a_0)$ 
replaced by $(r_1,a_1)$. Note that $r_1\geq r_0\geq 1$.

Therefore, we can repeat the argument above successively, now with initial times 
$t_0, 2t_0, 3t_0,\ldots$ and radius $r_1, r_2, r_3,\ldots$, and obtain
$$
v(kt_0,x)\geq\varepsilon\ \ \text{ for } |x|\leq r_k,
$$
for all $k\in\left\{0,1,2,3,\ldots\right\}$. Since
$$
r_k\geq r_0 e^{\sigma kt_0}
$$
by \eqref{bddx1R}, the statement of the lemma follows.
\end{proof}

\begin{corollary}\label{BDR}
Let $n\geq 1$, $\alpha\in(0,1)$, $f$ satisfy \eqref{nonl},
$p$ be a kernel satisfying \eqref{P1}-\eqref{P2}-\eqref{P3}, and
$0<\sigma<\frac{f'(0)}{n+2\alpha}$. Let $t_0\geq 1$ be the time given by Lemma~\ref{EXPR}. 

Then, for every measurable initial datum $u_0$ with $0\leq u_0\leq1$ and $u_0\not\equiv 0$, 
there exist $\varepsilon\in(0,1)$ 
and $b>0$ (both depending on $u_0$)
such that 
$$
u(t,x)\geq\varepsilon \quad\ \text{ for all } t\geq t_0 \text{ and }
|x|\leq be^{\sigma t},
$$
where $u $ is the mild solution of $\eqref{N}$ with $u(0,\cdot)=u_0$.
\end{corollary}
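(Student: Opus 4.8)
The plan is to reduce everything to Lemma~\ref{EXPR} in two moves: (i) show that after a fixed short time the flow of $u_0$ lies above an initial datum $v_0$ of the precise truncated‑power shape treated in that lemma, and (ii) fill the gaps between the discrete times $2+kt_0$ produced by Lemma~\ref{EXPR} using a crude linear subsolution.

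For move (i) I would use Lemma~\ref{appR}. Since $0\le u\le1$ and $f\ge0$ on $[0,1]$, the mild formula gives $u(2,\cdot)=T_2u_0+\int_0^2 T_{2-s}f(u(s))\,ds\ge T_2u_0$, and Lemma~\ref{appR} together with the lower bound in \eqref{P3} and the notation \eqref{nameq} yields $T_2u_0(x)\ge a_0|x|^{-n-2\alpha}$ for $|x|\ge1$, for some $a_0>0$ depending on $u_0$; moreover $T_2u_0$ is continuous and strictly positive, so $m:=\min_{\overline{B_1(0)}}T_2u_0>0$ also depends on $u_0$. Let $t_0\ge1$, $\varepsilon_0\in(0,1)$ be as in Lemma~\ref{EXPR} for the given $\sigma$, and set $L:=\mathrm{Lip}(f)$. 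I would then fix $r_0\ge1$ large (depending on $u_0$ and on $n,\alpha,B,f,\sigma$) and put $\varepsilon:=a_0r_0^{-n-2\alpha}$, choosing $r_0$ so that $\varepsilon\le\min\{\varepsilon_0,m\}$ and also $r_0$ is large in the quantitative sense required in move (ii). With $v_0$ the function of Lemma~\ref{EXPR} attached to this $r_0,\varepsilon,a_0$, a short case check ($|x|\ge r_0$, then $1\le|x|\le r_0$, then $|x|\le1$) shows $0\le v_0\le\varepsilon<1$ and $v_0\le T_2u_0\le u(2,\cdot)$. Since $u(\cdot+2,\cdot)$ is the mild solution of \eqref{N} with datum $u(2,\cdot)\ge v_0$, the comparison principle of subsections~2.3--2.4 gives $u(2+t,\cdot)\ge v(t,\cdot)$ for all $t\ge0$, where $v$ solves \eqref{N} with datum $v_0$; feeding in Lemma~\ref{EXPR} we get $u(2+kt_0,\cdot)\ge\varepsilon\,\chi_{B_{R_k}(0)}$ with $R_k:=r_0e^{\sigma kt_0}$, for every $k\ge0$.

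For move (ii), on each interval $s\in[0,t_0]$ I would compare $u(2+kt_0+s,\cdot)$ with the linear subsolution $\underline W(s,\cdot):=e^{-Ls}T_s\!\left(\varepsilon\chi_{B_{R_k}(0)}\right)$ --- a mild subsolution of \eqref{N} because $-Lv\le f(v)$ on the range $[0,\varepsilon]$ of $\underline W$ --- and, since $u(2+kt_0,\cdot)\ge\varepsilon\chi_{B_{R_k}(0)}=\underline W(0,\cdot)$, deduce $u(2+kt_0+s,x)\ge e^{-Ls}\varepsilon\int_{B_{R_k}(0)}p(s,x-y)\,dy$. For $|x|\le R_k/2$ one has $B_{R_k/2}(0)\subset B_{R_k}(x)$, so the defect $1-\int_{B_{R_k}(0)}p(s,x-y)\,dy$ is at most $\int_{|z|\ge R_k/2}p(s,z)\,dz$, which by \eqref{P3} and the scaling $z=s^{1/2\alpha}\zeta$ is bounded by $B\int_{|\zeta|\ge R_k/(2t_0^{1/2\alpha})}(1+|\zeta|^{n+2\alpha})^{-1}\,d\zeta\le CBt_0r_0^{-2\alpha}$ for $s\le t_0$ (using $R_k\ge r_0$). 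Having enlarged $r_0$ at the outset so that this is $\le1/2$, I conclude $u(2+kt_0+s,x)\ge\varepsilon':=\tfrac12 e^{-Lt_0}\varepsilon$ for all $k\ge0$, $s\in[0,t_0]$, $|x|\le R_k/2$. Finally, with $b:=\tfrac12 r_0e^{-\sigma(2+t_0)}$: for $t\ge2$ write $t=2+kt_0+s$ with $k\ge0$, $s\in[0,t_0)$, note $e^{\sigma t}\le e^{\sigma(2+t_0)}R_k/r_0$, hence $be^{\sigma t}\le R_k/2$ and $u(t,x)\ge\varepsilon'$ for $|x|\le be^{\sigma t}$; the residual range $t_0\le t\le2$ (nonempty only if $t_0<2$) is handled by the strict positivity of $T_tu_0\le u(t,\cdot)$ on the compact set $[t_0,2]\times\overline{B_{be^{2\sigma}}(0)}$, after shrinking $\varepsilon'$.

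The main obstacle I anticipate is move (ii): producing a lower bound on $[2+kt_0,2+(k+1)t_0]$ that is uniform in $k$ and does not collapse as $s\downarrow0$. The non‑degeneracy holds because at $s=0$ the bound is exactly $\varepsilon$, and the escaping‑mass estimate is uniform in $k$ precisely because every $B_{R_k}$ contains the fixed ball $B_{r_0}$. The argument therefore rests on committing to $r_0$ (and hence to $\varepsilon$, $b$, $\varepsilon'$) at the start so as to meet simultaneously the three requirements $\varepsilon\le\varepsilon_0$, $\varepsilon\le m$, and $CBt_0r_0^{-2\alpha}\le\tfrac12$; all of $\varepsilon,b,\varepsilon'$ then depend on $u_0$ only through $a_0$ and $m$.
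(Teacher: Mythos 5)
Your proof is correct, but at the crucial step --- upgrading the discrete-time conclusion of Lemma~\ref{EXPR} to all $t\geq t_0$ --- it takes a genuinely different route from the paper. The paper never confronts the gap-filling problem: it notes that the lower bound of Lemma~\ref{HR}(a) holds uniformly for $t$ in the whole compact interval $[t_0/2,3t_0/2]$, hence $u(\tau_0,\cdot)\geq v_0$ for \emph{every} starting time $\tau_0\in[t_0,2t_0]$; running Lemma~\ref{EXPR} from each such $\tau_0$ and observing that $\{\tau_0+kt_0:\ k\geq 0,\ \tau_0\in[t_0,2t_0]\}$ sweeps out all of $[t_0,\infty)$ yields $u\geq\varepsilon$ at every time, with no loss in the constant. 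You instead fix a single starting time ($t=2$), obtain $u(2+kt_0,\cdot)\geq\varepsilon\chi_{B_{R_k}(0)}$, and interpolate on each interval $[2+kt_0,2+(k+1)t_0]$ with the decaying subsolution $e^{-Ls}T_s(\varepsilon\chi_{B_{R_k}(0)})$, paying the price of an escaping-mass estimate (correctly made uniform in $k$ via $R_k\geq r_0$ and in $s$ via the scaling in \eqref{P3}) and ending with the smaller constant $\varepsilon'=\tfrac12 e^{-Lt_0}\varepsilon$ --- which is harmless for the statement. Both arguments are sound; the paper's device is lighter, yours is more self-contained in that it uses only the literal discrete-time output of the key lemma. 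One cosmetic point: to invoke the comparison principle \eqref{compar} for $e^{-Ls}T_s(\cdot)$ you should take $f_1(v)=-L|v|$ so that $f_1\leq f$ on all of $\mathbb{R}$ as that statement requires, and note that for nonnegative data the mild solution with this $f_1$ is still $e^{-Ls}T_s$; the paper glosses over the same point when it declares $e^{(f(\delta)/\delta)t}T_tv_0$ a subsolution in Lemma~\ref{EXPR}.
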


\begin{proof}
Since $u$ is a supersolution of the homogeneous problem (the problem with $f=0$), we have 
that $u(t_0/2,\cdot)\geq T_{t_0/2}u_0 > 0$ in $\mathbb{R}^n$, 
since $u_0\not\equiv0$. Thus, since $T_{t_0/2}u_0$ is a positive continuous function in $\mathbb{R}^n$,
we have $u(t_0/2,\cdot)\geq \eta \chi_{B_1(0)}$ in $\mathbb{R}^n$
for some constant $\eta>0$. Therefore, 
\begin{equation}\label{inequw}
u(t_0/2+t,\cdot)\geq T_t\left(\eta\chi_{B_1(0)}\right)
\geq \underline{v}(t,\cdot):=B^{-1}\eta \, q(t,\cdot)\ast\chi_{B_1(0)}
\ \ \text{in}\ \mathbb{R}^n,
\end{equation}
where $q$ was defined in \eqref{nameq}. We will use that 
$\underline{v}(t,\cdot)$ is radially nonincreasing by Lemma \ref{DECR}.

To bound $\underline{v}$ by below, we use the second inequality in \eqref{lowercharR} with
$t\in\left[t_0/2,3t_0/2\right]$. We take $x\in\mathbb{R}^n$ with 
$|x|\geq t_0^{\frac{1}{2\alpha}}\geq 1$ to have 
$t^{\frac{n}{2\alpha}+1}+|x|^{n+2\alpha}\leq 
C|x|^{n+2\alpha}$ 
for such $t$ and~$x$. We deduce
\begin{equation}\label{lbrt}
\underline{v}(t,x)\geq a_0|x|^{-n-2\alpha}\ \ \text{for} \ 
t\in\left[t_0/2,3t_0/2\right]\ \text{and}\ |x|
\geq r_0:=t_0^{\frac{1}{2\alpha}},
\end{equation}
for some $a_0>0$.
We make $a_0$ smaller, if necessary, to have that $\varepsilon:=a_0r_0^{-n-2\alpha}
\leq \varepsilon_0$, where $\varepsilon_0$ is given by Lemma \ref{EXPR}. 
Since $\underline{v}$ is radially nonincreasing, from \eqref{inequw} and \eqref{lbrt} 
we deduce 
$$
u(t_0/2+t,\cdot)\geq \underline{v}(t,\cdot)\geq v_0\ \ \text{ in }\mathbb{R}^n, 
\text{for all}\ t\in\left[t_0/2,3t_0/2\right],
$$
where $v_0$ is the initial condition in Lemma \ref{EXPR}. 

Thus, we can apply Lemma \ref{EXPR} to get a lower bound for $u(\cdot+\tau_0,\cdot)$ 
for all $\tau_0\in[t_0,2t_0]$. Since $\left\{\tau_0+kt_0\, |\, k=0,1,2,\ldots\ \ 
\text{and} \ \tau_0\in[t_0,2t_0]\right\}$ cover all $[t_0,\infty)$, we deduce 
$$
u(t,x)\geq\varepsilon\ \ \text{if}\ \ t\geq t_0\ \ \text{and}\ \ 
|x|\leq r_0 e^{-\sigma2t_0}e^{\sigma t}
$$
by taking $t=\tau_0+kt_0$ and using $|x|\leq r_0 e^{-\sigma2t_0} e^{\sigma t}\leq 
r_0 e^{-\sigma\tau_0}e^{\sigma t}=r_0e^{\sigma kt_0}$. This last statement proves the corollary 
taking $b=r_0e^{-\sigma 2t_0}$. 
\end{proof}

Using Corollary \ref{BDR} we can easily deduce Proposition \ref{NTW} on 
nonexistence of traveling waves. 

\begin{proof}[Proofs of Lemma~\ref{epslemma} and Proposition~\ref{NTW}]
We apply Corollary \ref{BDR} with $\sigma$ replaced by $\sigma'$, where
$\sigma'\in(\sigma,f'(0)/(n+2\alpha))$. Since $e^{\sigma t}\leq b e^{\sigma' t}$ for $t$ 
large (where $b$ is the constant in the statement of Corollary \ref{BDR}), we deduce 
the statement of Lemma~\ref{epslemma}, i.e.,
\begin{equation*}
u(t,x)\geq\varepsilon \ \ \quad \text{ for }  t\geq\underline{t}
\text{ and } |x|\leq e^{\sigma t}.
\end{equation*}

We can now prove Proposition \ref{NTW}. That is, all solutions $u$ of \eqref{N}
with values in $[0,1]$ 
and of the form  $u(t,x)=\varphi (x + t e)$, for some vector $e\in\R^n$, are
identically 0 or 1. 

Indeed, assume that $u\not\equiv 0$ and replace the initial datum $\varphi(x)$ for $u$ by
the smaller one $\min\{\varphi(x),|x|^{-n-2\alpha}\}$. The mild solution for this new initial
condition is smaller than $u$ and satisfies, by Lemma~\ref{epslemma}, the conclusion
of the lemma for any given $\sigma<\sigma_\ast$. Hence, we also have that
$\varphi (x + t e)=u(t,x)\geq\varepsilon$ if $|x|\leq e^{\sigma t}$ and $t\geq \underline{t}$.
As a consequence, $\varphi (y)\geq\varepsilon$ if $|y-te|\leq|y|+t|e|\leq e^{\sigma t}$ and 
$t\geq \underline{t}$.
But, given any $y\in\R^n$, the two last inequalities are true for $t$ large enough.
We deduce that $\varphi\geq\varepsilon$ in all of $\R^n$, and hence
$u\geq\varepsilon$ in all of $(0,\infty)\times\R^n$.

Note now that $f(s)\geq\frac{f(\varepsilon)}{1-\varepsilon}(1-s)$ for all $s\in [\varepsilon,1]$. 
Thus, $u\geq v$ where $v$ is the solution of the linear problem 
\begin{equation*}
\left \{ \begin{array}{rcll}
v_t+Av & = & \frac{f(\varepsilon)}{1-\varepsilon}(1-v)\ \ \text{in}\ 
(0,\infty)\times\mathbb{R}^n, \\
v(0,\cdot)& = & \varepsilon\ \ \text{in}\ \mathbb{R}^n.
\end{array}\right.
\end{equation*}
Its solution is explicit, 
$$
v(t,x)=v(t)=1-(1-\varepsilon)e^{-\frac{f(\varepsilon)}{1-\varepsilon}t}.
$$
Since $v\rightarrow 1$ as $t\rightarrow +\infty$, we have that $u\rightarrow 1$
uniformly in $\mathbb{R}^n$ as $t\rightarrow +\infty$. 
Therefore, since $u(t,x)=\varphi(x+te)=u(T,x+(t-T)e)$, letting $T\to\infty$ we conclude
$u\equiv 1$.
\end{proof}

Next, we have to prove the convergence to 1 behind the front. 
Once we know that the solution remains larger than a small positive constant behind the front,   
the proof of the the convergence towards 1 is dimension independent.
We write this step in the following, which will be very useful also when
proving the precise level set  bounds of Theorem~\ref{LEVR}.

To simplify the proof, we assume the initial datum to belong to the domain $D_{u,b}(A)$.
The lemma, however, holds without this assumption thanks to the more involved
maximum principle of subsection~2.6; see Remark~\ref{without} below.

\begin{lemma}\label{CONV1}
Let $n\geq 1$, $\alpha\in(0,1)$, $f$ satisfy \eqref{nonl},
and $p$ be a kernel satisfying \eqref{P1}-\eqref{P2}-\eqref{P3}.
Let $u$ be a solution of 
\eqref{N} with  $0\leq u\leq 1$ such that $u(0,\cdot)\in D_{u,b}(A)$ and 
\begin{equation}\label{epsleverR}
u\geq\varepsilon \ \quad \text{ for all } t\geq t_0 \text{ and } |x|\leq a e^{\nu t},
\end{equation}
for some positive constants $\varepsilon\in(0,1)$, $a$, $\nu$, and $t_0$. Then, we have:

{\rm i)} For all $\lambda \in(0,1)$ there exist constants $t_\lambda >t_0$ and $C_\lambda >0$ 
such that 
\begin{equation}\label{alllevelR}
u\geq \lambda \ \quad \text{ for all } t\geq t_\lambda \text{ and } |x|\leq \frac{1}{C_\lambda}e^{\nu t}.
\end{equation}

{\rm ii)} For every $\sigma\in (0,\nu)$, $u(t,x)\to 1$ 
uniformly in $\left\{|x|\leq e^{\sigma t}\right\}$ as $t\to +\infty$.
\end{lemma}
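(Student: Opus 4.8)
The plan is to obtain ii) from i) at once, and to prove i) by constructing, for each $\lambda\in(0,1)$, an explicit subsolution of \eqref{N} lying below $u$ that climbs to the level $\lambda$ on a cone $\{|x|\le C_\lambda^{-1}e^{\nu t}\}$ with the \emph{same} exponential rate $\nu$. Granting i), for $\sigma\in(0,\nu)$ and $\lambda\in(0,1)$ one has $e^{\sigma t}\le C_\lambda^{-1}e^{\nu t}$ once $t\ge(\ln C_\lambda)/(\nu-\sigma)$, so $u\ge\lambda$ on $\{|x|\le e^{\sigma t}\}$ for all large $t$; since $0\le u\le1$ and $\lambda$ is arbitrary, this is precisely the uniform convergence $u\to1$ on $\{|x|\le e^{\sigma t}\}$.

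For i), the first step uses the concavity of $f$ together with $f(0)=0$: the chord through $(\varepsilon',f(\varepsilon'))$ and $(1,0)$ lies below the graph, so $f(s)\ge k(1-s)$ on $[\varepsilon',1]$, where $k:=f(\varepsilon')/(1-\varepsilon')>0$, for any $\varepsilon'\in(0,\varepsilon)$. As $f(\varepsilon')\to0$ when $\varepsilon'\to0$, I would fix $\varepsilon'\in(0,\varepsilon)$ so small that $k<2\alpha\nu$ and put $\gamma:=k/\nu\in(0,2\alpha)$. I then take $t_1\ge t_0$ large (depending only on $\alpha,\nu,a$, so that $\tau^{1/(2\alpha)}\le ae^{\nu(t_1+\tau)}$ for every $\tau\ge0$), set $R_1:=ae^{\nu t_1}$, and let $\underline w$ be the solution on $[t_1,\infty)$ of the \emph{linear} equation $\underline w_t+A\underline w=k(1-\underline w)$ with initial value at $t_1$
$$
\underline w(t_1,x)=\varepsilon-c_0|x|^\gamma,\qquad c_0:=\mu\,\varepsilon\,R_1^{-\gamma},\quad\mu:=\max\{1,(c_\gamma\varepsilon)^{-1}\},
$$
$c_\gamma,C_\gamma$ being the constants of Lemma~\ref{HR}(b). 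This datum belongs to $X_\gamma$ (modulo a harmless regularization near the origin, $w_\gamma=|x|^\gamma$ not lying in $D_\gamma(A)$; alternatively one uses the maximum principle for mild solutions of subsection~2.6), it is $\le\varepsilon$ on $B_{R_1}$ and $\le0$ off $B_{R_1}$, hence $\le u(t_1,\cdot)$ by \eqref{epsleverR}, and the flow is explicit: with $\tau=t-t_1$, $w_\gamma(x)=|x|^\gamma$, and $A1=0$,
$$
1-\underline w(t,x)=e^{-k\tau}T_\tau\bigl(1-\underline w(t_1,\cdot)\bigr)(x)=e^{-k\tau}\bigl[(1-\varepsilon)+c_0\,T_\tau w_\gamma(x)\bigr].
$$

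From the bounds $c_\gamma|x|^\gamma\le T_\tau w_\gamma(x)\le C_\gamma(|x|^\gamma+\tau^{\gamma/(2\alpha)})$ of Lemma~\ref{HR}(b) and the identity $\gamma\nu=k$ I would read off three facts: (1) $\underline w(t,x)\to-\infty$ as $|x|\to\infty$, uniformly for $\tau$ in bounded intervals; (2) $\underline w(t,\cdot)\le0$ on $\{|x|\ge ae^{\nu t}\}$ for every $t\ge t_1$, since there $c_0T_\tau w_\gamma(x)\ge c_0c_\gamma(ae^{\nu t})^\gamma=\mu c_\gamma\varepsilon\,e^{k\tau}\ge e^{k\tau}$, so that $\{0<\underline w(t,\cdot)<\varepsilon'\}\subset\{|x|<ae^{\nu t}\}$; and (3) on $\{|x|\le C_\lambda^{-1}e^{\nu t}\}$, $1-\underline w(t,x)\le(1-\varepsilon)e^{-k\tau}+C_\gamma c_0\tau^{\gamma/(2\alpha)}e^{-k\tau}+C_\gamma\mu\varepsilon a^{-\gamma}C_\lambda^{-\gamma}$, whose first two terms tend to $0$ as $\tau\to\infty$ while the last is made $<(1-\lambda)/2$ by taking $C_\lambda$ large; hence $\underline w(t,x)>\lambda$ for all $t\ge t_\lambda$, a suitable $t_\lambda>t_1$. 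Finally I compare: put $v:=\underline w-u$ on $[t_1,\infty)\times\R^n$. Wherever $v>0$ one has $\underline w>u\ge0$, and in fact $\underline w\ge\varepsilon'$, for otherwise $x\in\{0<\underline w<\varepsilon'\}\subset\{|x|<ae^{\nu t}\}$ by (2), where $u\ge\varepsilon$ by \eqref{epsleverR}, forcing $v<0$. Therefore $f(\underline w)\ge k(1-\underline w)=\underline w_t+A\underline w$ on $\{v>0\}$, so
$$
v_t+Av=k(1-\underline w)-f(u)\le f(\underline w)-f(u)=c(t,x)\,v,\qquad c(t,x):=\int_0^1 f'\!\bigl(u+\theta(\underline w-u)\bigr)\,d\theta,
$$
a continuous function with $|c|\le\mathrm{Lip}(f)$. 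Since $v(t_1,\cdot)\le0$ and, by (1), $v(t,\cdot)\to-\infty$ at spatial infinity, Proposition~\ref{nonlocMPdomain} in $X_\gamma$ (or Proposition~\ref{nonlocMPmild} for the mild version) gives $v\le0$, i.e.\ $u\ge\underline w$ on $[t_1,\infty)\times\R^n$; together with (3) this is \eqref{alllevelR}, which proves i), and then ii).

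I expect the construction — steps (1)–(3) and the choice of $\underline w$ — to be the main obstacle. The comparison function must do three things that pull against each other: stay below $u$ (and, since $u$ is only bounded below by $\varepsilon$ on the cone, this is exactly what forces the tail $-c_0|x|^\gamma$ driving $\underline w$ to $-\infty$), keep the transition region $\{0<\underline w<\varepsilon'\}$ inside that cone (so that concavity genuinely makes $\underline w$ a subsolution wherever $v>0$), and still rise to any level $\lambda<1$ on a cone of the same rate $\nu$. Reconciling these pins down $\gamma=f(\varepsilon')/\bigl(\nu(1-\varepsilon')\bigr)$ and forces $\gamma<2\alpha$, hence $\varepsilon'$ small; the quantitative estimates (1)–(3) via the semigroup bounds of Lemma~\ref{HR}(b) are where the bulk of the work sits.
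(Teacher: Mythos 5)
Your proposal is correct and follows essentially the same route as the paper: the same choice of $\varepsilon'$ with $k=f(\varepsilon')/(1-\varepsilon')<2\alpha\nu$ and $\gamma=k/\nu$, the same comparison function (your $1-\underline{w}$ is, up to the choice of constants and of the datum at $t_1$, exactly the paper's supersolution $w$ for $1-u$), the same use of concavity on $\{\underline{w}\ge\varepsilon'\}\cup\{|x|<ae^{\nu t}\}$, and the same maximum principle in $X_\gamma$. The only point left to finish is the one you flag yourself: $w_\gamma\notin D_\gamma(A)$, which the paper fixes by taking $\tilde w_\gamma=\int_0^1T_sw_\gamma\,ds\in D_\gamma(A)$ as datum (or, as you note, by invoking instead the maximum principle for mild solutions of subsection~2.6).
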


Note that in \eqref{epsleverR} and \eqref{alllevelR} we have the same exponent $\nu$
in the exponential. This will be a key point to establish Theorem~\ref{LEVR}  concerning
the bounds on the level sets with the exact exponent (in the exponential).

\begin{remark}\label{without}
{\rm
The statement of Lemma~\ref{CONV1} will suffice for our purposes. However,
the lemma also holds without the assumption $u(0,\cdot)\in D_{u,b}(A)$. This
assumption on the initial datum being in the domain allows to use the simple maximum
principle of Proposition~\ref{nonlocMPdomain} and its immediate consequence:
Lemma~\ref{AR}.

The lemma holds without the assumption $u(0,\cdot)\in D_{u,b}(A)$ since we can apply
instead the maximum principle of Proposition~\ref{nonlocMPmild}, which gives that
Lemma~\ref{AR} also holds without the hypothesis on  $v(t,\cdot)\in D_\gamma(A)$ for all 
$t>0$.
}
\end{remark}

To prove Lemma~\ref{CONV1}, we need to use a comparison function modeled
by $w_\gamma(x)=|x|^{\gamma}$. Thus, we consider the semigroup in the space $X_\gamma$
introduced in subsection~2.1. To use the simple maximum principles of subsection~2.5
for classical solutions,
instead of using as initial datum $w_\gamma(x)=|x|^{\gamma}$ we use the function
\begin{equation}\label{deftildew}
\tilde{w}_\gamma(x)=\int_0^1T_s w_{\gamma}\,ds,
\end{equation}
which belongs to $D_\gamma(A)$ as pointed out in \eqref{intdomain}.

In addition, since $T_t\tilde{w}_\gamma(x)=\int_t^{t+1} T_s w_{\gamma}\,ds$, 
using the bounds in part b) of Lemma \ref{HR}, we deduce
\begin{equation}\label{uppgammadom}
T_t \tilde w_\gamma(x)\leq C_{\gamma}(|x|^{\gamma}+(t+1)^{\frac{\gamma}{2\alpha}})
\quad\text{ for all } t>0,\, 
x\in\mathbb{R}^n,
\end{equation}
and
\begin{equation}\label{lowgammadom}
T_t \tilde w_\gamma(x)\geq c_{\gamma}|x|^{\gamma}\quad\text{ if } t>0,\, 
|x|\geq (t+1)^{\frac{1}{2\alpha}}.
\end{equation}
The constants $C_{\gamma}$ and $c_\gamma$ depend only on $n$, $\alpha$, $B$, and $\gamma$.

\begin{proof}[Proof of Lemma \ref{CONV1}]
Since $u(0,\cdot)\in D_{u,b}(A)$, for any $\gamma\in (0,2\alpha)$ the mild solution $u$ satisfies
$u \in C^1([0,\infty);X_\gamma)$, $u([0,\infty))\subset D_{u,b}(A)\subset D_{\gamma}(A)$,
and it is a classical solution (see Remark~\ref{regularity}).
By hypothesis, for every $t_1\in[t_0,\infty)$ (to be chosen later),
\begin{equation}\label{epsregionR}
\varepsilon\leq u\leq 1 \ \quad \text{ in } \Omega_r:= \left\{t> t_1\ 
,|x|< \overline{r}(t):=ae^{\nu t}\right\}.
\end{equation}
Since $f$ is concave and $f(0)=f(1)=0$, for every $0<\varepsilon'<\varepsilon$ we have 
\begin{equation}\label{epsstrictR}
f(s)\geq \frac{f(\varepsilon')}{1-\varepsilon'}(1-s)\quad \ \text{ for all }\ \ s\in[\varepsilon,1].
\end{equation}
We take $\varepsilon'\in(0,\varepsilon)$ small enough so that
\begin{equation*}
0<q_{\varepsilon'}:=\frac{f(\varepsilon')}{1-\varepsilon'}<2\alpha\nu.
\end{equation*}
With this choice of $\varepsilon'$, we take $\gamma$ defined by
\begin{equation*}
0<\gamma:=\frac{q_{\varepsilon'}}{\nu}<2\alpha.
\end{equation*}
Note that by \eqref{epsregionR} and \eqref{epsstrictR}, we have 
\begin{equation}\label{epsineqR}
(\partial_t+A)(1-u)=-f(u)\leq -q_{\varepsilon'}(1-u)\ \ \text{ in } \Omega_r.
\end{equation}

We now use as comparison function the solution $w$ of
\begin{equation*}
\left \{ \begin{array}{rcll}
w_t+Aw & = & -q_{\varepsilon'}w&\text{in } [t_1,\infty)\times\mathbb{R}^n, \\
w(t_1,x)& = & 1+\frac{1}{c_{\gamma} a^{\gamma}}\tilde{w}_\gamma(x)& \text{for }
x\in\mathbb{R}^n,
\end{array}\right.
\end{equation*}
where $\tilde{w}_\gamma\in X_\gamma$ has been defined in \eqref{deftildew}.
Here, $a$ is the constant in 
\eqref{epsregionR} and $c_{\gamma}$ the constant in \eqref{lowgammadom}. 
The solution in the space $X_\gamma$ of this linear problem is given by
\begin{equation*}
w(t,x)=e^{-q_{\varepsilon'}(t-t_1)}\left\{1+\frac{1}{c_{\gamma} a^{\gamma}}
T_{t-t_1}\tilde{w}_\gamma(x)\right\}
\end{equation*}
for $t\geq t_1$ and $x\in\mathbb{R}^n$.
Since $\tilde{w}_\gamma \in D_\gamma(A)$, the solution $w$ is classical;
in particular, $w\in C^1([t_1,\infty);X_\gamma)$ and $w([t_1,\infty))\subset D_\gamma(A)$.

We apply Lemma \ref{AR} to 
$$
v:=(1-u)-w,
$$ 
with initial time $t_1$, $c(t,x)\equiv -q_{\eps'}$, and $|x|\leq\overline{r}(t):=ae^{\nu t}$ 
in \eqref{epsregionR}. We know that $v\in C^1([t_1,\infty);X_\gamma)$ and 
$v([t_1,\infty))\subset D_\gamma(A)$.

Condition \eqref{AR1} with 
$t=0$ replaced by $t=t_1$, i.e.,  $v\leq 0$ for $t=t_1$ in 
$\mathbb{R}^n$, holds since $1-u\leq1\leq w$ for $t=t_1$.

To verify \eqref{AR2}, we take $t_1\geq t_0$ large enough
to guarantee $ae^{\nu t}\geq (t+1)^{\frac{1}{2\alpha}}\geq (t-t_1+1)^{\frac{1}{2\alpha}}$ 
for $t\geq t_1$. 
Thus, the lower bound in \eqref{lowgammadom} gives that
if $t\geq t_1$ and $|x|\geq \overline{r}(t)$, then
$T_{t-t_1}\tilde w_\gamma(x)\geq c_\gamma |x|^\gamma\geq c_\gamma a^\gamma e^{\gamma\nu t}$.
Hence, 
$$
w(t,x)\geq e^{-q_{\varepsilon'}t}e^{q_{\varepsilon'}t_1}e^{\gamma\nu t}\geq 
e^{(\gamma\nu-q_{\varepsilon'})t}=1\geq1-u(t,x)\ \ \text{if } t\geq t_1 \text{ and }
|x|\geq\overline{r}(t).
$$

Finally, \eqref{AR3} clearly holds since, by \eqref{epsineqR},
$$v_t+Av=-f(u)+q_{\varepsilon'}w\leq -q_{\varepsilon'}(1-u-w)=-q_{\varepsilon'}v
\quad\text{ in } \Omega_r.
$$
Therefore, by Lemma \ref{AR}, $v\leq0$ in $[t_1,\infty)\times\mathbb{R}^n$
for some $t_1$ taken to be large enough. Thus, using also
the upper bound \eqref{uppgammadom}, we conclude
\begin{equation}\label{final}
\begin{split}
1-u (t,x)&\leq  w(t,x)=e^{-q_{\varepsilon'}(t-t_1)}\left\{1+\frac{1}{c_{\gamma} 
a^{\gamma}} T_{t-t_1}\tilde{w}_\gamma(x)\right\} \\ 
& \leq  e^{-q_{\varepsilon'}(t-t_1)} \left\{1+C_{a,\gamma}(|x|^{\gamma}+
(t-t_1+1)^{\frac{\gamma}{2\alpha}})\right\}\ \text{ in }\R^n, \text{ if } t\geq t_1,
\end{split}
\end{equation}
for some constant $C_{a,\gamma}$ depending on $a$ and $\gamma$.

{From} this bound, we deduce the two statements of the lemma. First, to prove part i), 
in the new region $\left\{t\geq t_\lambda,\, |x|\leq C_\lambda^{-1} 
e^{\nu t}\right\}$ (where $t_\lambda$ and $C_\lambda$ are to be chosen next), we have
\begin{eqnarray*}
(1-u)(t,x) &\leq & e^{-q_{\varepsilon'}(t-t_1)}\left\{1+C_{a,\gamma}
(C_\lambda^{-\gamma} e^{\gamma \nu t}+(t+1)^{\frac{\gamma}{2\alpha}})\right\}\\
 &=&e^{q_{\varepsilon'}t_1}\left\{e^{-q_{\varepsilon'}t}+C_{a,\gamma}C_\lambda^{-\gamma}+
C_{a,\gamma}(t+1)^{\frac{\gamma}{2\alpha}}e^{-q_{\varepsilon'}t}\right\}\\
 &\leq&\frac{1-\lambda}{2}+e^{q_{\varepsilon'}t_1}C_{a,\gamma}C_\lambda^{-\gamma}\leq
1-\lambda
\end{eqnarray*}
if we take both $t_\lambda$ and $C_\lambda$ large enough.
Thus, $u\geq \lambda$ in this region, as claimed.

Inequality \eqref{final} also shows part ii) of the lemma, that is,
the uniform convergence of $u$ towards $1$
in the region $\left\{|x|\leq e^{\sigma t}\right\}$ when $\sigma<\nu$.
Simply use that $\gamma \sigma <\gamma\nu=q_{\varepsilon'}$.
\end{proof}

We can finally establish our first main result.

\begin{proof}[Proof of Theorem \ref{LIMR}]
Part a) is simple. Since $f(s)\leq f'(0)s$ for all $s\in [0,1]$, we have that
$u\leq v$ where $v$ is the solution of 
$v_t+Av=f'(0)v$ with initial condition $u_0$. It is given by
$$
v(t,x)=e^{f'(0)t}\, T_tu_0(x).
$$
Since $u_0(x)\leq \min(1,C|x|^{-n-2\alpha})$, the upper bound in part a) of
Lemma \ref{HR} leads to $T_t u_0(x)\leq C t|x|^{-n-2\alpha}$ 
for $t\geq 1$ and $x\in\mathbb{R}^n$.
Thus, 
$$u(t,x)\leq v(t,x)\leq C t e^{f'(0)t}|x|^{-n-2\alpha}\ \quad\text{ for all } t\geq 1
\text{ and }x\in\mathbb{R}^n.
$$ 
{From} this, statement a) in the theorem follows immediately.
Indeed, for $|x|\geq e^{\sigma t}$ and $t$ large enough, we deduce
$$
u(t,x)\leq C t e^{f'(0)t}e^{-(n+2\alpha)\sigma t}\ \longrightarrow\ 0 
\ \ \text{as}\ \ t\uparrow\infty,
$$
since $\sigma > f'(0)/(n+2\alpha)$.

To prove part b) of the theorem, note that it suffices to establish it for the solution 
of \eqref{N} with a smaller
initial datum that $u(2,\cdot)$, i.e., $u$ at time $2$. We replace $u(2,\cdot)$ at time $2$
by the smaller initial datum $\underline{u}_0:=c\int_1^2 p(s,\cdot) ds$.
By Lemma~\ref{appR}, $\underline{u}_0\leq T_2u_0 \leq u(2,\cdot)$ and hence, 
$\underline{u}(t,\cdot)\leq u(t+2,\cdot)$ for all $t>0$, where $\underline{u}$ is the solution with
initial datum $\underline{u}_0$. In addition, by the same lemma,
$\underline{u}_0\in D_0(A)\subset D_{u,b}(A)$, 
and this will allow us to apply Lemma~\ref{CONV1} to $\underline{u}$. 
Now, given $\sigma <\sigma_\ast$, take $\sigma'$ such that
$$
0<\sigma<\sigma'<\frac{f'(0)}{n+2\alpha}.
$$
We first apply Corollary \ref{BDR} to $\underline{u}$ with $\sigma$ replaced by $\sigma'$. We obtain
\begin{equation*}
\underline{u}\geq \varepsilon\ \quad  \text{ if } t\geq t_0\ ,|x|\leq be^{\sigma't},
\end{equation*}
for some constants $b>0$ and $t_0$. 
Hence, we can apply Lemma \ref{CONV1} to  $\underline{u}$ 
with $\nu$ replaced by $\sigma'$. Part ii) of the lemma gives the desired convergence
of $\underline{u}$ (and hence of $u$) towards $1$.
\end{proof}

\section{Nondecreasing initial data}

The plan is the same as that of Section 3.
To prove part b) of Theorem~\ref{LIMI}, we need a key lemma similar to Lemma \ref{EXPR}.

\begin{lemma}\label{EXPI}
Let $n= 1$, $\alpha\in(0,1)$, $f$ satisfy \eqref{nonl},
and $p$ be a kernel satisfying \eqref{P1}-\eqref{P2}-\eqref{P3}.
Recall that $B$ is the constant in \eqref{P3}. 
Then, for every 
$0<\sigma<\frac{f'(0)}{2\alpha}$, there exist $t_0\geq1$ and 
$0<\varepsilon_0 <1$ 
depending only on $\alpha$, $B$, $f$, and $\sigma$, for which the following 
holds. 

Given $x_0\leq-1$ and $0<\varepsilon\leq\varepsilon_0$, let $a_0>0$ be defined by 
$a_0|x_0|^{-2\alpha}=\varepsilon$, and let
\begin{equation*} 
V_0(x)=\left \{ \begin{array}{lll}
a_0 |x|^{-2\alpha}   &\text{for} &x\leq x_0, \\
\varepsilon = a_0 |x_0|^{-2\alpha} &\text{for} &x\geq x_0.
\end{array}\right.
\end{equation*}
Then, the mild solution $v$ of \eqref{N} with initial condition $V_0$ satisfies
$$v(kt_0,x)\geq \varepsilon \ \ \ \text{for } x\geq x_0e^{\sigma kt_0}
$$
and $k\in\left\{0,1,2,3,\ldots\right\}$.
\end{lemma}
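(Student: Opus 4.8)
The approach mirrors, almost verbatim, the proof of Lemma~\ref{EXPR}: one establishes the one–step estimate ($k=1$) and then iterates it over the times $t_0,2t_0,3t_0,\dots$, the only substantive changes being that the radial profile is replaced by the nondecreasing profile $V_0$, that the decay estimates of Lemma~\ref{HR}~a) are replaced by those of Lemma~\ref{HI}~a), and that radial monotonicity of an auxiliary convolution is replaced by monotonicity of the flow for nondecreasing data. The case $k=0$ is trivial.

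\emph{Choice of constants.} Since $f(\delta)/\delta \uparrow f'(0)$ as $\delta \downarrow 0$ and $\sigma < f'(0)/(2\alpha)=\sigma_{\ast\ast}$, one fixes $\delta \in (0,1)$ small so that
\[
\sigma \;<\; \tfrac{1}{2}\Bigl(\sigma + \tfrac{f'(0)}{2\alpha}\Bigr) \;<\; \tfrac{1}{2\alpha}\,\tfrac{f(\delta)}{\delta} \;<\; \tfrac{f'(0)}{2\alpha},
\]
which is possible precisely because $\sigma<f'(0)/(2\alpha)$. Let $c>0$ be the constant in the lower bound of Lemma~\ref{HI}~a). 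Since the exponential rate $\tfrac{1}{2}(\sigma + f'(0)/(2\alpha))$ strictly exceeds $\sigma$ while the prefactor $c\,t_0/(t_0^{1/(2\alpha)+1}+1)$ decays only polynomially in $t_0$, one then picks $t_0 \ge 1$, depending only on $\alpha,B,f,\sigma$, with
\[
\Bigl(c\,\tfrac{t_0}{t_0^{1/(2\alpha)+1}+1}\Bigr)^{1/(2\alpha)} e^{\frac{1}{2}(\sigma + f'(0)/(2\alpha))t_0} \;\ge\; e^{\sigma t_0},
\]
and finally sets $\varepsilon_0 := \delta\,e^{-f'(0)t_0} \in (0,\delta)$.

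\emph{The step $k=1$.} Given $x_0 \le -1$ and $0<\varepsilon\le\varepsilon_0$, put $a_0=\varepsilon|x_0|^{2\alpha}$ and $w := e^{(f(\delta)/\delta)t}\,T_t V_0$, the mild solution of $w_t + Aw = (f(\delta)/\delta)w$, $w(0,\cdot)=V_0$. As $0\le V_0\le\varepsilon$ and $T_t$ averages, $0\le w \le e^{(f(\delta)/\delta)t_0}\varepsilon \le e^{f'(0)t_0}\varepsilon_0 = \delta$ on $[0,t_0]$ (using $f(\delta)/\delta\le f'(0)$), hence concavity of $f$ gives $(f(\delta)/\delta)w\le f(w)$ there, so $w$ is a mild subsolution of \eqref{N} on $[0,t_0]$. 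By the comparison principle of subsection~2.3, the solution $v$ of \eqref{N} with datum $V_0$ satisfies $v(t_0,\cdot)\ge w(t_0,\cdot)$, and $v(t_0,\cdot)$ is nondecreasing because $V_0$ is (subsection~2.4). Inserting $w(t_0,x)=e^{(f(\delta)/\delta)t_0}T_{t_0}V_0(x)$ into the lower bound of Lemma~\ref{HI}~a) yields, for $x<x_0$,
\[
v(t_0,x) \;\ge\; e^{(f(\delta)/\delta)t_0}\,c\,\tfrac{t_0}{t_0^{1/(2\alpha)+1}+1}\,a_0\,|x|^{-2\alpha}.
\]
Defining $x_1<0$ by equality at level $\varepsilon$ in this bound and using $a_0=\varepsilon|x_0|^{2\alpha}$ together with the choices of $\delta,t_0$,
\[
|x_1| \;=\; |x_0|\Bigl(c\,\tfrac{t_0}{t_0^{1/(2\alpha)+1}+1}\Bigr)^{1/(2\alpha)} e^{\frac{1}{2\alpha}(f(\delta)/\delta)t_0} \;\ge\; |x_0|\,e^{\sigma t_0} \;>\; |x_0|,
\]
so $x_1 \le x_0 e^{\sigma t_0} < x_0 \le -1$. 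For $x\le x_1$ the displayed inequality reads $v(t_0,x)\ge a_1|x|^{-2\alpha}$ with $a_1:=\varepsilon|x_1|^{2\alpha}$; for $x\ge x_1$ monotonicity gives $v(t_0,x)\ge v(t_0,x_1)\ge\varepsilon$. Thus $v(t_0,\cdot)\ge V_1$, where $V_1$ is the profile of the statement with $(x_0,a_0)$ replaced by $(x_1,a_1)$ (and $x_1\le -1$); in particular $v(t_0,x)\ge\varepsilon$ for $x\ge x_0 e^{\sigma t_0}$.

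\emph{Iteration and the main obstacle.} Running the same argument from $V_1$ at time $t_0$, then from $V_2$ at time $2t_0$, and so on, and using the comparison principle to bound $v(\,\cdot+kt_0,\cdot)$ below by the flow issued from $V_k$, one produces points $x_k \le x_{k-1}e^{\sigma t_0}\le\cdots\le x_0 e^{\sigma k t_0}$ with $v(kt_0,x)\ge\varepsilon$ for $x\ge x_k$, hence a fortiori for $x\ge x_0 e^{\sigma k t_0}$, which is the assertion. I expect the only real work, exactly as in Lemma~\ref{EXPR}, to be the calibration of $t_0$: the one–step displacement $|x_1|/|x_0|$ must beat $e^{\sigma t_0}$ despite the polynomially small prefactor $c\,t_0/(t_0^{1/(2\alpha)+1}+1)$, which is what forces the intermediate exponent $\tfrac{1}{2}(\sigma + f'(0)/(2\alpha))$ and is where the hypothesis $\sigma<f'(0)/(2\alpha)$ is consumed. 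The only structural novelty relative to Lemma~\ref{EXPR} is that the $|x|^{-2\alpha}$ decay of $V_0$ at $-\infty$ (rather than $|x|^{-1-2\alpha}$) produces the exponent $1/(2\alpha)$ when solving for $|x_1|$, and that one invokes monotonicity of the flow for nondecreasing data (subsection~2.4) in place of the radial-monotonicity argument; the constant plateau $V_0\equiv\varepsilon$ on $[x_0,\infty)$ does not harm the bound $w\le\delta$ on $[0,t_0]$ since $V_0\le\varepsilon\le\varepsilon_0$.
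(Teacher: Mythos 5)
Your proposal is correct and follows essentially the same route as the paper's proof: the same choice of $\delta$, $t_0$, and $\varepsilon_0=\delta e^{-f'(0)t_0}$, the same subsolution $w=e^{(f(\delta)/\delta)t}T_tV_0$ kept below $\delta$ on $[0,t_0]$, the same use of the lower bound in Lemma~\ref{HI}~a) to define $x_1$ with $x_1\le x_0e^{\sigma t_0}$, the same appeal to monotonicity of the flow for nondecreasing data on $[x_1,\infty)$, and the same iteration. No gaps.
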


\begin{proof}
The result being true  for $k=0$, let us  prove it for $k=1$. Let $\delta\in(0,1)$ be 
sufficiently small such that
\begin{equation}\label{delta}
\sigma<\frac{1}{2}\left(\sigma+\frac{f'(0)}{2\alpha}\right)<\frac{1}{2\alpha}
\frac{f(\delta)}{\delta}<\frac{1}{2\alpha}f'(0).
\end{equation}
We take $t_0\geq 1$ sufficiently large, 
depending only on 
$\alpha$, $B$, $f$ and $\sigma$, such that
\begin{equation}\label{deft0I}
\left(c\frac{t_0}{t_0^{\frac{1}{2\alpha}+1}+1}\right)^\frac{1}{2\alpha} 
e^{\frac{1}{2}\left(\sigma+\frac{f'(0)}{2\alpha}\right)t_0}
\geq e^{\sigma t_0},
\end{equation}
where $c>0$ is the constant in the lower bound in part~a) of Lemma~\ref{HI}.
In particular, $c$ depends only on $\alpha$ and $B$.
Define now $0<\varepsilon_0<\delta$ by
\begin{equation*}
\varepsilon_0 = \delta e^{-f'(0)t_0}.
\end{equation*}

Recall that, in what follows, we are given $x_0\leq-1$ and $\varepsilon$ such that
$$
0<\varepsilon\leq\varepsilon_0<\delta.
$$
Let 
$$
w:=e^{(f(\delta)/\delta)t}\,T_tV_0. 
$$
It satisfies 
$$
w_t+Aw=\frac{f(\delta)}{\delta}w ,\ \ w(0,\cdot)=V_0
$$
in the mild sense. Since $V_0\leq\varepsilon$ in $\R$, we also have
$T_tV_0\leq\varepsilon$ in $\R$ for all $t>0$.
Now, for $t\leq t_0$, $0\leq w\leq  e^{(f(\delta)/\delta)t_0}\varepsilon\leq 
e^{f'(0)t_0}\varepsilon_0=\delta$.
Since $(f(\delta)/\delta)w\leq f(w)$ for $0\leq w\leq\delta$, 
we have that $w$ is a mild subsolution of \eqref{N} in $\left[0,t_0\right]\times\mathbb{R}$. 
Thus, $v(t_0,\cdot)\geq w(t_0,\cdot)$ in $\mathbb{R}$.
By the lower bound in part~a) of Lemma~\ref{HI}, we have
\begin{equation}\label{pow}
v(t_0,x)\geq w(t_0,x)\geq e^{(f(\delta)/\delta)t_0}c\frac{t_0}{t_0^{\frac{1}{2\alpha}+1}+1}
\frac{a_0}{|x|^{2\alpha}}\qquad \text{for}\ x\leq x_0. 
\end{equation}
Let us define $x_1<0$ by 
\begin{equation}\label{defx1}
e^{(f(\delta)/\delta)t_0}c\frac{t_0}{t_0^{\frac{1}{2\alpha}+1}+1}
\frac{a_0}{|x_1|^{2\alpha}}=\varepsilon.
\end{equation}
Since $a_0=\varepsilon|x_0|^{2\alpha}$, we get
$$
x_1=x_0\left(c\frac{t_0}{t_0^{\frac{1}{2\alpha}+1}+1}\right)^{\frac{1}{2\alpha}} 
e^{\frac{1}{2\alpha}\frac{f(\delta)t_0}{\delta}}.
$$
By \eqref{deft0I} and the second inequality in \eqref{delta}, we have
\begin{equation}\label{bddx1}
x_1\leq x_0 e^{\sigma t_0}<x_0.
\end{equation}

Now, since $x_1<x_0$, \eqref{pow} and \eqref{defx1} lead to $v(t_0,x)\geq a_1
|x|^{-2\alpha}$ for $x\leq x_1$, where 
$a_1:=\varepsilon \left|x_1\right|^{2\alpha}$. Since $v$ is nondecreasing in $x$
(see the last comment in subsection 2.4), 
we also have $v(t_0,x)\geq a_1
|x_1|^{-2\alpha}=\varepsilon$ for $x\geq x_1$. 

Thus, $v(t_0,\cdot)\geq V_1$ 
where $V_1$ is given by the expression for $V_0$ in the statement of the lemma 
with $(x_0,a_0)$ replaced by $(x_1,a_1)$. Note that $x_1\leq x_0\leq -1$.

Therefore, we can repeat the argument above successively, now with initial times $t_0, 2t_0,
3t_0,\ldots$  and points $x_1, x_2, x_3,\ldots$, and get that 
$$
v(kt_0,x)\geq\varepsilon\ \ \ \text{for } x\geq x_k
$$
for all $k\in\left\{0,1,2,3,\ldots\right\}$. Since
$$
x_k\leq x_0 e^{\sigma kt_0}
$$
by \eqref{bddx1}, the statement of the lemma follows.
\end{proof}

\begin{corollary}\label{BDI}
Let $n=1$, $\alpha\in(0,1)$, $f$ satisfy \eqref{nonl},
$p$ be a kernel satisfying \eqref{P1}-\eqref{P2}-\eqref{P3}, and
$0<\sigma<\frac{f'(0)}{2\alpha}$. Let $t_0\geq 1$ be the time given by Lemma~\ref{EXPI}. 

Then, for every measurable nondecreasing initial datum $u_0$ with $0\leq u_0\leq1$ and $u_0\not\equiv 0$, 
there exist $\varepsilon\in(0,1)$ 
and $b>0$ (both depending on $u_0$)
such that 
$$
u(t,x)\geq\varepsilon \quad\ \text{ for all } t\geq t_0 \text{ and }
x\geq -be^{\sigma t},
$$
where $u$ is the mild solution of $\eqref{N}$ with $u(0,\cdot)=u_0$.
\end{corollary}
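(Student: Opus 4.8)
The plan is to repeat, step by step, the proof of Corollary~\ref{BDR}, making two substitutions adapted to the nondecreasing setting: the radial monotonicity of the flow provided by Lemma~\ref{DECR} is replaced by the monotonicity in $x$ of the nonlinear flow starting from nondecreasing data (established at the end of subsection~2.4), and the lower bound \eqref{lowercharR} on $T_t\chi_{B_1(0)}$ is replaced by the lower bound \eqref{lowercharI} on $T_t\chi_{(0,+\infty)}$ obtained inside the proof of Lemma~\ref{appI}.

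First I would reduce to the initial datum $V_0$ of Lemma~\ref{EXPI}. Since $f\geq 0$ on $[0,1]$, $u$ is a supersolution of the homogeneous problem; since $u_0\not\equiv 0$, the function $T_{t_0/2}u_0=p(t_0/2,\cdot)\ast u_0$ is continuous, strictly positive and nondecreasing, so that $u(t_0/2,\cdot)\geq T_{t_0/2}u_0\geq \eta\,\chi_{(0,+\infty)}$ in $\mathbb{R}$ with $\eta:=T_{t_0/2}u_0(0)>0$. By the comparison principle of subsection~2.4 and \eqref{lowercharI}, for $t>0$ and $x<0$,
$$u(t_0/2+t,x)\geq T_t\big(\eta\,\chi_{(0,+\infty)}\big)(x)\geq\underline{v}(t,x):=\eta B^{-1}\big(q(t,\cdot)\ast\chi_{(0,+\infty)}\big)(x)\geq c\,\eta B^{-1}\big(1+|t^{-\frac1{2\alpha}}x|\big)^{-2\alpha},$$
where $q$ is the function in \eqref{nameq}; moreover $\underline{v}(t,\cdot)$ is nondecreasing in $x$, being the convolution of the nonnegative kernel $q(t,\cdot)$ with a nondecreasing function.

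Next I would extract from $\underline{v}$ a profile of the exact shape required by Lemma~\ref{EXPI}. Put $r_0:=(3t_0/2)^{1/(2\alpha)}$, so that $r_0\geq t^{1/(2\alpha)}$ whenever $t\leq 3t_0/2$. For $t\in[t_0/2,3t_0/2]$ and $x\leq -r_0$ one has $1+|t^{-\frac1{2\alpha}}x|\leq 2\,t^{-\frac1{2\alpha}}|x|$, hence $\underline{v}(t,x)\geq c\,\eta B^{-1}2^{-2\alpha}(t_0/2)\,|x|^{-2\alpha}=:a_0|x|^{-2\alpha}$. Decreasing $a_0$ if necessary so that $\varepsilon:=a_0r_0^{-2\alpha}\leq\varepsilon_0$ (with $\varepsilon_0$ from Lemma~\ref{EXPI}) and setting $x_0:=-r_0\leq -1$ (using $t_0\geq1$), the monotonicity of $\underline{v}$ in $x$ gives $\underline{v}(t,x)\geq\underline{v}(t,x_0)\geq a_0|x_0|^{-2\alpha}=\varepsilon$ for $x\geq x_0$ and $t\in[t_0/2,3t_0/2]$. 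Therefore $\underline{v}(t,\cdot)\geq V_0$, the datum of Lemma~\ref{EXPI} for this pair $(x_0,a_0)$, for all $t\in[t_0/2,3t_0/2]$, which is to say $u(\tau_0,\cdot)\geq V_0$ for every $\tau_0\in[t_0,2t_0]$.

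It remains to iterate and cover $[t_0,\infty)$, exactly as in Corollary~\ref{BDR}. For each fixed $\tau_0\in[t_0,2t_0]$, the comparison principle gives $u(\tau_0+s,\cdot)\geq v(s,\cdot)$ for $s\geq0$, where $v$ is the mild solution of \eqref{N} with datum $V_0$; Lemma~\ref{EXPI} then yields $v(kt_0,x)\geq\varepsilon$ for $x\geq x_0e^{\sigma kt_0}$ and $k\in\{0,1,2,\dots\}$. Since $[t_0,\infty)=\{\tau_0+kt_0:\tau_0\in[t_0,2t_0],\ k\geq0\}$, writing a given $t\geq t_0$ as $t=\tau_0+kt_0$ we get $x_0e^{\sigma kt_0}=-r_0e^{\sigma(t-\tau_0)}\leq -r_0e^{-2\sigma t_0}e^{\sigma t}=:-b\,e^{\sigma t}$, so $u(t,x)\geq\varepsilon$ whenever $x\geq -b\,e^{\sigma t}$, with $b:=r_0e^{-2\sigma t_0}$. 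This is the assertion of the corollary. The only genuinely delicate point is the second step ---producing, uniformly in $\tau_0$ over a full period $[t_0,2t_0]$, a lower bound on the solution with the precise profile $V_0$ demanded by Lemma~\ref{EXPI}; once this is in hand, the covering argument is identical to the one used for Corollary~\ref{BDR}.
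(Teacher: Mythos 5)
Your proposal is correct and follows essentially the same route as the paper's proof: reduce at time $t_0/2$ to $\eta\chi_{(0,+\infty)}$, use the lower bound \eqref{lowercharI} together with monotonicity in $x$ to place the profile $V_0$ of Lemma~\ref{EXPI} below $u(\tau_0,\cdot)$ uniformly for $\tau_0\in[t_0,2t_0]$, and then iterate and cover $[t_0,\infty)$. The only cosmetic differences are your choice $x_0=-(3t_0/2)^{1/(2\alpha)}$ versus the paper's $-t_0^{1/(2\alpha)}$, and your use of the monotonicity of the linear flow $\underline{v}$ where the paper invokes the monotonicity of $u$ itself; both are valid.
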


\begin{proof}
Since $u$ is a supersolution of the homogeneous problem (the problem with $f=0$)
and $u_0\not\equiv0$, we have 
that $u(t_0/2,\cdot)\geq T_{t_0/2}u_0 > 0$ in $\mathbb{R}$. 
Since $u$ is nondecreasing in $x$ (see subsection~2.4), 
$u(t_0/2,x)\geq u(t_0/2,0)\geq T_{t_0/2} u_0(0)=:\eta>0$ for all $x\geq 0$
(recall that $T_{t_0/2}u_0$ is a continuous positive function).
Thus, $u(t_0/2,\cdot)\geq \eta \chi_{(0,\infty)}$ in $\mathbb{R}$,
for some constant $\eta>0$. The second inequality in \eqref{lowercharI} now gives, for
$t>0$ and $x\leq 0$, 
\begin{equation*}
u(t_0/2+t,x) \geq \eta T_t \chi_{(0,\infty)}(x) 
\geq \eta B^{-1} c(1+t^{-\frac{1}{2\alpha}}|x|)^{-2\alpha}.
\end{equation*}
We deduce
$$
u(t_0/2+t,x)\geq a_0|x|^{-2\alpha}\ \ \text{for} \ 
t\in\left[t_0/2,3t_0/2\right]\ \text{and}\ x\leq x_0:=-t_0^{\frac{1}{2\alpha}}\leq -1,
$$
for some $a_0>0$.
We make $a_0$ smaller, if necessary, to have that $\varepsilon:=a_0|x_0|^{-2\alpha}
\leq \varepsilon_0$, where $\varepsilon_0$ is given by Lemma \ref{EXPI}. 
Since $u$ is nondecreasing,  we deduce 
$$
u(t_0/2+t,\cdot)\geq V_0\ \ \text{ in }\mathbb{R} 
\text{ for all } t\in\left[t_0/2,3t_0/2\right],
$$
where $V_0$ is the initial condition in Lemma \ref{EXPI}. 

Thus, we can apply Lemma \ref{EXPI} to get a lower bound for $u(\cdot+\tau_0,\cdot)$ 
for all $\tau_0\in[t_0,2t_0]$. Since $\left\{\tau_0+kt_0\, |\, k=0,1,2,\ldots\ \ 
\text{and} \ \tau_0\in[t_0,2t_0]\right\}$ cover all $[t_0,\infty)$, we deduce 
$$
u(t,x)\geq\varepsilon\ \ \text{if}\ \ t\geq t_0\ \ \text{and}\ \ 
x\geq x_0 e^{-\sigma2t_0}e^{\sigma t}
$$
by taking $t=\tau_0+kt_0$ and using (recall here that $x_0<0$)
that $x\geq x_0 e^{-\sigma2t_0} e^{\sigma t}\geq 
x_0 e^{-\sigma\tau_0}e^{\sigma t}=x_0e^{\sigma kt_0}$. This last statement proves the corollary 
taking $b=|x_0|e^{-\sigma 2t_0}$. 
\end{proof}

We can now give the proof of Theorem \ref{LIMI}. 
Note that the previous lemma and corollary are crucial to guarantee that 
$u\geq\varepsilon$ for $x\geq -be^{\sigma t}$. Thus, in this region $f(u)$ is 
greater than a positive linear function vanishing at $u=1$. This will 
lead to the exponential convergence to 1 in the region. 

To show this and prove part b) of Theorem \ref{LIMI}, we need to use a comparison function modeled
by $W_\gamma(x)=(x_-)^{\gamma}$. Thus, we consider the semigroup in the space $X_\gamma$
introduced in subsection~2.1. To use the simple maximum principles of subsection~2.5
for classical solutions,
instead of using as initial datum $W_\gamma(x)=(x_-)^{\gamma}$ we use the function
\begin{equation}\label{deftildeW}
\tilde{W}_\gamma(x)=\int_0^1T_s W_{\gamma}\,ds,
\end{equation}
which belongs to $D_\gamma(A)$ as pointed out in \eqref{intdomain}.

In addition, since $T_t\tilde{W}_\gamma(x)=\int_t^{t+1} T_s W_{\gamma}\,ds$, 
using the bounds in part b) of Lemma \ref{HI}, we deduce
\begin{equation}\label{uppgammadomI}
T_t \tilde W_\gamma(x)\leq C_{\gamma}(|x|^{\gamma}+(t+1)^{\frac{\gamma}{2\alpha}})
\quad\text{ for all } t>0,\, x<0,
\end{equation}
and
\begin{equation}\label{lowgammadomI}
T_t \tilde W_\gamma(x)\geq c_{\gamma}|x|^{\gamma}\quad\text{ for all } t>0,\, x<0.
\end{equation}
The constants $C_{\gamma}$ and $c_\gamma$ depend only on $\alpha$, $B$, and $\gamma$.

\begin{proof}[Proof of Theorem \ref{LIMI}]
Part a) is simple. Since $f(s)\leq f'(0)s$ for all $s\in [0,1]$, we have that
$u\leq v$ where $v$ is the solution of 
$v_t+Av=f'(0)v$ with initial condition $u_0$. It is given by
$$
v(t,x)=e^{f'(0)t}\, T_tu_0(x).
$$
We know that $u_0(x)\leq C_0|x|^{-2\alpha}$ for some constant $C_0$; we may assume $C_0>1$.
Taking $x_0:=-C_0^{1/(2\alpha)}<-1$, we have $u_0\leq 1= C_0|x_0|^{-2\alpha}$
and thus $u_0\leq V_0$ in $\R$, where $V_0$ is the function in part a) of
Lemma \ref{HI}. The upper bound in part a) of
Lemma \ref{HI} leads to $T_t u_0(x)\leq C t|x|^{-2\alpha}$ 
for $t\geq 1$ and $x<2x_0$. Thus, 
$$
u(t,x)\leq v(t,x)\leq C t e^{f'(0)t}|x|^{-2\alpha}
$$
for $t\geq 1$ and $x<2x_0$. 
From this bound, statement a) in the theorem follows immediately. 
Indeed, for $x\leq-e^{\sigma t}$ and $t$ large enough, we deduce
$$
u(t,x)\leq C t e^{f'(0)t}e^{-2\alpha\sigma t}\ \longrightarrow\ 0 
\ \ \text{as}\ \ t\uparrow\infty,
$$
since $\sigma > f'(0)/(2\alpha)$. 

To prove part b) of the theorem, note that it suffices to establish it for the solution 
of \eqref{N} with a smaller
initial datum that $u(2,\cdot)$, i.e., $u$ at time $2$. We replace $u(2,\cdot)$
at time $2$ by the smaller initial datum $\underline{u}_0:=c\int_1^2 P(s,\cdot) ds$.
By Lemma~\ref{appI}, $\underline{u}_0\leq T_2u_0 \leq u(2,\cdot)$ and hence, 
$\underline{u}(t,\cdot)\leq u(t+2,\cdot)$ for all $t>0$, where $\underline{u}$ is the solution with
initial datum $\underline{u}_0$. In addition, by the same lemma,
$\underline{u}_0\in  D_{u,b}(A)$, 
and this will allow us to apply Lemma~\ref{AI} to $\underline{u}$. 
To simplify notation, in the rest of the proof we denote the solution $\underline{u}(t,\cdot)$
by $u(t,\cdot)$.

Since now $u(0,\cdot)\in D_{u,b}(A)$, the mild solution $u$ satisfies
$u \in C^1([0,\infty);X_\gamma)$ and $u([0,\infty))\subset D_{u,b}(A)\subset D_{\gamma}(A)$
for any $\gamma\in (0,2\alpha)$,
and it is a classical solution (see Remark~\ref{regularity}).

Now, given $\sigma <\sigma_{\ast\ast}$, take $\sigma'$ such that
$$
0<\sigma<\sigma'<\frac{f'(0)}{2\alpha}.
$$
We apply Corollary \ref{BDI} to $u$ with $\sigma$ replaced by $\sigma'$.
We obtain, for any $t_1\geq t_0$ ($t_0$ is given by the corollary), 
\begin{equation}\label{regionI}
\varepsilon\leq u \leq 1 \ \text{ in } \Omega:=\left\{t> t_1,x>\overline{x}(t):=
-be^{\sigma't}\right\},
\end{equation}
for some positive constants $\varepsilon$ and $b$.
Since $f$ is concave and $f(0)=f(1)=0$, for every $0<\varepsilon'<\varepsilon$ we have 
\begin{equation}\label{epsstrictI}
f(s)\geq \frac{f(\varepsilon')}{1-\varepsilon'}(1-s)\quad \ \text{ for all } s\in[\varepsilon,1].
\end{equation}
We take $\varepsilon'\in(0,\varepsilon)$ small enough so that
\begin{equation*}
0<q_{\varepsilon'}:=\frac{f(\varepsilon')}{1-\varepsilon'}<2\alpha\sigma'.
\end{equation*}
With this choice of $\varepsilon'$, we take $\gamma$ defined by
\begin{equation*}
0<\gamma:=\frac{q_{\varepsilon'}}{\sigma'}<2\alpha.
\end{equation*}
Note that by \eqref{regionI} and \eqref{epsstrictI}, we have 
\begin{equation}\label{epsineqI}
(\partial_t+A)(1-u)=-f(u)\leq -q_{\varepsilon'}(1-u)\ \ \text{ in } \Omega.
\end{equation}

We now use as comparison function the solution $w$ of
\begin{equation*}
\left \{ \begin{array}{rcll}
w_t+Aw & = & -q_{\varepsilon'}w&\text{in } [t_1,\infty)\times\mathbb{R}, \\
w(t_1,x)& = & 1+\frac{1}{c_{\gamma} b^{\gamma}}\tilde{W}_\gamma(x)& \text{for }
x\in\mathbb{R},
\end{array}\right.
\end{equation*}
where $\tilde{W}_\gamma\in X_\gamma$ has been defined in \eqref{deftildeW}.
Here, $b$ is the constant in 
\eqref{regionI} and $c_{\gamma}$ the constant in \eqref{lowgammadomI}. 
The solution in the space $X_\gamma$ of this linear problem is given by
\begin{equation*}
w(t,x)=e^{-q_{\varepsilon'}(t-t_1)}\left\{1+\frac{1}{c_{\gamma} b^{\gamma}}
T_{t-t_1}\tilde{W}_\gamma(x)\right\}
\end{equation*}
for $t\geq t_1$ and $x\in\mathbb{R}$.
Since $\tilde{W}_\gamma \in D_\gamma(A)$, the solution $w$ is classical;
in particular, $w\in C^1([t_1,\infty);X_\gamma)$ and $w([t_1,\infty))\subset D_\gamma(A)$.

We apply Lemma \ref{AI} to 
$$
v:=(1-u)-w,
$$ 
with initial time $t_1$, $c(t,x)\equiv -q_{\eps'}<0$, and $\overline{x}(t):=-be^{\sigma' t}$ 
in \eqref{regionI}. We know that $v\in C^1([t_1,\infty);X_\gamma)$ and 
$v([t_1,\infty))\subset D_\gamma(A)$.

Condition \eqref{AI1} with 
$t=0$ replaced by $t=t_1$, i.e.,  $v\leq 0$ for $t=t_1$ in 
$\mathbb{R}$, holds since $1-u\leq1\leq w$ for $t=t_1$.

To verify \eqref{AI2}, we use the lower bound in \eqref{lowgammadomI}.
For $t\geq t_1$ and $x\leq \overline{x}(t)<0$, we have  
$T_{t-t_1}\tilde W_\gamma(x)\geq c_\gamma |x|^\gamma\geq c_\gamma b^\gamma e^{\gamma\sigma' t}$.
Hence, 
$$
w(t,x)\geq e^{-q_{\varepsilon'}t}e^{q_{\varepsilon'}t_1}e^{\gamma\sigma' t}\geq 
e^{(\gamma\sigma'-q_{\varepsilon'})t}=1\geq1-u(t,x)\ \ \text{if } t\geq t_1 \text{ and }
x\leq\overline{x}(t).
$$

To verify \eqref{AI3}, we use Proposition~\ref{limitsinfincr}.
Let $l:=\lim_{x\to +\infty} u(t_1,x)$. Since $\phi_l(t)$ is nondecreasing in $t$,
the proposition gives that $\limsup_{x\to +\infty} (1-u)(t,x)=1-\phi_l(t)
\leq 1-\phi_l(t_1)=:\delta$ uniformly in $t\in [t_1,T]$ for all $T>t_1$.
We apply Lemma~\ref{AI} with this choice of $\delta$.

Finally, \eqref{AI4} clearly holds since, by \eqref{epsineqI},
$$
v_t+Av=-f(u)+q_{\varepsilon'}w\leq -q_{\varepsilon'}(1-u-w)=-q_{\varepsilon'}v
\quad\text{ in } \Omega.
$$

Therefore, by Lemma \ref{AI}, for all $t_1\geq t_0$ we have $v\leq\delta =1-\phi_l(t_1)$ in 
$[t_1,\infty)\times\mathbb{R}$. Thus, using 
the upper bound \eqref{uppgammadomI}, we conclude
\begin{equation*}
\begin{split}
1-u (t,x)&\leq 1-\phi_l(t_1)+ w(t,x)\\
&= 1-\phi_l(t_1)+
e^{-q_{\varepsilon'}(t-t_1)}\left\{1+\frac{1}{c_{\gamma} 
b^{\gamma}} T_{t-t_1}\tilde{W}_\gamma(x)\right\} \\ 
& \leq 1-\phi_l(t_1)+ 
e^{-q_{\varepsilon'}(t-t_1)} \left\{1+C_{b,\gamma}(|x|^{\gamma}+
(t-t_1+1)^{\frac{\gamma}{2\alpha}})\right\}
\end{split}
\end{equation*}
if $t\geq t_1$ and $x<0$, for some constant $C_{b,\gamma}$ depending only on
$\alpha$, $B$, $b$ and $\gamma$.

This inequality shows part b) of the theorem, that is,
the uniform convergence of $u$ towards $1$
in the region $\left\{x\geq -e^{\sigma t}\right\}$. Indeed, given $\eps>0$ choose $t_1\geq t_0$
large enough such that $1-\phi_l(t_1)<\eps$;  
recall that the solution of the ODE, $\phi_l(t)$, tends to $1$ as $t\to\infty$.
With this choice of $t_1$, the remaining term of the above bound is also smaller than
$\eps$ for $t$ large enough; 
simply use that $\gamma \sigma <\gamma\sigma' =q_{\varepsilon'}$.
This ends the proof of Theorem \ref{LIMI}. 
\end{proof}

\section{Level set bounds in $\mathbb{R}$ when $A=(-\Delta)^{1/2}$ }

In this section we consider $n=1$, $A=(-\Delta)^{1/2}$, and $f(u)=u(1-u)$, that is, equation
\begin{equation} \label{N12bis}
u_t + (-\Delta)^{1/2} u =  u(1-u) \ \ \ \ \text{in} \ 
\left(0,+\infty\right)\times \mathbb{R}.
\end{equation}

The transition kernel $p_{1/2}$ is known explicit, even in dimension $n$. It is given by
$p_{1/2}(t,x)=B_nt^{-n}\left(1+t^{-2}r^2\right)^{-\frac{n+1}{2}}=
B_nt(t^2+r^2)^{-\frac{n+1}{2}}$, where $r=|x|$ and $B_n=\Gamma(\frac{n+1}{2})\pi^{-\frac{n+1}{2}}$ 
is a positive constant. Thus, we have 
\begin{eqnarray*}
(-\Delta)^{1/2}p_{1/2}&=&-\partial_tp_{1/2}\\
&=&B_n\left\{nt^{-n-1}\left(1+t^{-2}r^2\right)^{-\frac{n+1}{2}}-(n+1)t^{-n}
\left(1+t^{-2}r^2\right)^{-\frac{n+3}{2}}t^{-3}r^2\right\}\\
&=&B_nt^{-n-1}\left(1+t^{-2}r^2\right)^{-\frac{n+3}{2}}\left\{n
\left(1+t^{-2}r^2\right)-(n+1)t^{-2}r^2\right\}\\
&=&B_nt^{-n}t^{-1}\left(1+t^{-2}r^2\right)^{-\frac{n+3}{2}}\left\{n-t^{-2}r^2\right\}.
\end{eqnarray*}
{From} this we deduce that, given a constant $b>0$,
\begin{equation} \label{lap12}
(-\Delta)^{1/2}\left(1+b^{-2}r^2\right)^{-\frac{n+1}{2}} =
b^{-1}\left(1+b^{-2}r^2\right)^{-\frac{n+3}{2}}\left\{n-b^{-2}r^2\right\}\quad\text{ in }\R^n.
\end{equation}

Consider now, on the model of $p_{1/2}$, a function $u$ of the form
$$
u(t,x)=a\left(1+\frac{r^2}{b(t)^2}\right)^{-\frac{n+1}{2}}
$$
with $b=b(t)$ to be chosen later.
Using \eqref{lap12}, we compute $u_t+(-\Delta)^{1/2}u-u(1-u)$ in $\mathbb{R}^n$:
\begin{eqnarray*}
u_t &=& a \left(1+b^{-2}r^2\right)^{-\frac{n+3}{2}}(n+1)b^{-3}b'r^2,\\
(-\Delta)^{1/2}u &=& a\left(1+b^{-2}r^2\right)^{-\frac{n+3}{2}}b^{-1}
\left(n-b^{-2}r^2\right), \\
u(1-u) &=& a\left(1+b^{-2}r^2\right)^{-\frac{n+1}{2}}
\left\{1-a\left(1+b^{-2}r^2\right)^{-\frac{n+1}{2}}\right\} \\
      &=& a\left(1+b^{-2}r^2\right)^{-\frac{n+3}{2}}
\left\{1+b^{-2}r^2-a\left(1+b^{-2}r^2\right)^{-\frac{n-1}{2}}\right\}.
\end{eqnarray*}
Thus, we have
\begin{eqnarray}
&& a^{-1}\left(1+b^{-2}r^2\right)^{\frac{n+3}{2}}\left\{ u_t+(-\Delta)^{1/2}u-(u-u^2)\right\} =
\nonumber\\
&& \hspace{1cm}= nb^{-1}-1 +a \left(1+b^{-2}r^2\right)^{-\frac{n-1}{2}}  +b^{-3}r^2
\left\{(n+1)b'-1-b\right\}.
\label{comp12}
\end{eqnarray}
We wish the above function $u$ to serve as a sub or a supersolution depending on its 
parameters. We have:

\begin{lemma}\label{SUBSUP}
Let $n=1$. For $a>0$ and $b_0>1$, let  
$$
u_{a,b_0}(t,x):=a\left(1+\frac{x^2}{\left\{(1+b_0)e^{t/2}-1\right\}^2}\right)^{-1}
\quad\text{ for }t>0,x\in\R.
$$ 
Then,

{\rm a)} If $a\leq\di\frac{b_0-1}{b_0}$, then $u_{a,b_0}$ is a subsolution of \eqref{N12bis}. 

{\rm b)} If $a\geq 1$, then  $u_{a,b_0}$ is a supersolution of \eqref{N12bis}.
\end{lemma}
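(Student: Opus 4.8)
The plan is to feed the explicit ansatz into identity \eqref{comp12}, specialized to $n=1$, after observing that the time-dependent scale $b(t):=(1+b_0)e^{t/2}-1$ built into $u_{a,b_0}$ has been chosen precisely to annihilate the term carrying $r^2$. Indeed, $b(t)$ is the solution of $2b'=1+b$ with $b(0)=b_0$, so for $n=1$ one has $(n+1)b'-1-b=2b'-1-b\equiv 0$ and the last bracket in \eqref{comp12} vanishes identically. Moreover, since $b_0>1$ and $e^{t/2}\ge 1$ we have $b(t)\ge b_0>1>0$ for all $t\ge 0$; hence for each fixed $t$ the function $x\mapsto u_{a,b_0}(t,x)=a\bigl(1+r^2/b(t)^2\bigr)^{-1}$ (with $r=|x|$) is a bounded $C^2$ function with slow growth, so the pointwise formula for $(-\Delta)^{1/2}$ applies, i.e.\ \eqref{lap12} holds with $n=1$ and the constant $b$ replaced by $b(t)$.

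Granting these two remarks, \eqref{comp12} with $n=1$ (so that $a(1+b^{-2}r^2)^{-(n-1)/2}$ equals $a$ and $(1+b^{-2}r^2)^{(n+3)/2}=(1+b^{-2}r^2)^2$) collapses to
\[
a^{-1}\left(1+\frac{r^2}{b(t)^2}\right)^{2}\left\{\partial_t u_{a,b_0}+(-\Delta)^{1/2}u_{a,b_0}-(u_{a,b_0}-u_{a,b_0}^2)\right\}=a-1+\frac{1}{b(t)}.
\]
Since $a>0$ and the prefactor is strictly positive, the sign of $\partial_t u_{a,b_0}+(-\Delta)^{1/2}u_{a,b_0}-(u_{a,b_0}-u_{a,b_0}^2)$ agrees, at every $(t,x)$, with the sign of the $x$-independent quantity $a-1+b(t)^{-1}$. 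This sign computation is essentially the whole content of the lemma; the only points needing care are the applicability of \eqref{lap12} just mentioned and keeping the inequalities straight, so I do not anticipate a genuine obstacle.

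For part a), the hypothesis $a\le\frac{b_0-1}{b_0}=1-\frac{1}{b_0}$ together with $b(t)\ge b_0$ gives $a-1+\frac{1}{b(t)}\le a-1+\frac{1}{b_0}\le 0$ for all $t\ge 0$, hence $\partial_t u_{a,b_0}+(-\Delta)^{1/2}u_{a,b_0}\le u_{a,b_0}-u_{a,b_0}^2$ and $u_{a,b_0}$ is a subsolution of \eqref{N12bis}. For part b), since $b(t)>0$ we have $\frac{1}{b(t)}>0$, so $a-1+\frac{1}{b(t)}>a-1\ge 0$ whenever $a\ge 1$, which gives the reverse inequality and makes $u_{a,b_0}$ a supersolution. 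I would close by noting that $u_{a,b_0}$ is bounded and continuous in $t\ge 0$ with values in $C_{u,b}(\mathbb{R})$, so these pointwise sub/supersolution inequalities are exactly what is required to compare $u_{a,b_0}$ with the solution of \eqref{N12} through the comparison principle of subsection~2.3 (cf.\ Remark~\ref{regularity}), as will be used in the proof of Theorem~\ref{LEVR}.
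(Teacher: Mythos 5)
Your proposal is correct and follows the same route as the paper: it plugs $b(t)=(1+b_0)e^{t/2}-1$ into identity \eqref{comp12}, uses $2b'=1+b$ to kill the $r^2$ term for $n=1$, and reads off the sign of $a-1+b(t)^{-1}$ from $b(t)\geq b_0$ in each case. The extra remarks on the applicability of \eqref{lap12} are harmless additions, not a different argument.
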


\begin{proof}
Let $b(t)=(1+b_0)e^{t/2}-1$. Note that $2b'(t)=(1+b_0)e^{t/2}=1+b(t)$. Thus, by \eqref{comp12},
\begin{equation*}
a^{-1}\left(1+b(t)^{-2}x^2\right)^2\left\{ u_t+(-\Delta)^{1/2}u-(u-u^2)\right\} =
b(t)^{-1}-1 +a.
\end{equation*}

Now, since $b(t)\geq b_0$ for all $t>0$, the last expression satisfies
$b(t)^{-1}-1 +a\leq b_0^{-1}-1 +a=a-\frac{b_0-1}{b_0}\leq 0$ under the assumption in part a). 

Finally, since $b(t)^{-1}-1 +a\geq -1 +a\geq 0$ under the assumption in part b).
\end{proof}

Using this result and also our key Lemma~\ref{CONV1}, we can finally give the

\begin{proof}[Proof of Theorem \ref{LEVR}]
Let $\lambda\in (0,1)$. We start proving the inclusion
\begin{equation*}
\{|x|>C_\lambda e^{t/2} \}\subset \{u<\lambda\} \quad\text{ for all } t>0
\end{equation*}
if $C_\lambda$ is chosen large enough.
We simply use the explicit supersolution $u_{a,b_0}$ of Lemma~\ref{SUBSUP}
for some appropriate $a\geq 1$ and $b_0>1$. Take it at time $t=0$: 
$$
u_{a,b_0}(0,x)=a\left( 1+\frac{x^{2}}{b_0^2} \right)^{-1}\geq
\left( 1+\frac{x^{2}}{b_0^2} \right)^{-1}
\geq \frac{b_0^{2}}{2}|x|^{-2}  \quad\text{ if }|x|\geq b_0.
$$
Recall that we assume $u_0(x)\leq C|x|^{-2}$. Thus $u_0\leq u_{a,b_0}(0,\cdot)$
for $|x|\geq b_0$ if we take $b_0>1$ large enough (independently of $a\geq 1$, that
we can still choose). Now, by taking $a\geq 1$ large enough we also have
$u_0\leq u_{a,b_0}(0,\cdot)$ in $\{|x|\leq b_0\}$, and hence in all of $\R$.

We apply the comparison principle of subsection~2.4. Since 
$0\leq u\leq 1$, $0\leq u_{a,b_0}\leq a$, and $a\geq 1$,
here we change $f$ given by $f(u)=u-u^2$ outside $[0,a]$ to have
hypothesis \eqref{hypf} on the new $f$. The comparison principle gives that
$u(t,x)\leq u_{a,b_0}(t,x)$ for all $(t,x)$, that is,
$$
u(t,x) \leq a\left(1+\frac{x^2}{\left( (1+b_0)e^{t/2}-1\right)^2}\right)^{-1}
$$
Hence, if $u(t,x)\geq\lambda$ then 
$$
1+\frac{x^2}{\left( (1+b_0)e^{t/2}-1\right)^2}\leq\frac{a}{\lambda}
$$
and thus
$|x|\leq(1+b_0)\sqrt{a/\lambda}\, e^{t/2}$.

Next, we prove the other inclusion in \eqref{incl}:
\begin{equation}\label{incl2}
\{|x|<\frac{1}{C_\lambda} e^{t/2} \}\subset \{u>\lambda\} \quad\text{ for } t>t_\lambda,
\end{equation}
if $t_\lambda$ and $C_\lambda$ are chosen large enough.
Clearly, it suffices to prove this statement for the solution of \eqref{N} with a smaller
initial datum that $u(2,\cdot)$, i.e., $u$ at time $2$. We replace $u(2,\cdot)$
by the smaller initial datum $\underline{u}_0:=c\int_1^2 p(s,\cdot) ds$ at time $2$.
By Lemma~\ref{appR}, $\underline{u}_0\leq T_2u_0\leq u(2,\cdot)$ and hence, 
$\underline{u}(t,\cdot)\leq u(t+2,\cdot)$ for all $t>0$, where $\underline{u}$ is the solution with
initial datum $\underline{u}_0$. In addition, by the same lemma,
$\underline{u}_0\in D_0(A)\subset D_{u,b}(A)$, 
and this will allow us to use Lemma~\ref{CONV1} to $\underline{u}$.
To simplify notation, we denote $\underline{u}(t,\cdot)$ again by $u(t,\cdot)$.

Now we use crucially Lemma~\ref{CONV1} with $\nu=1/2$ in its statement. 
It requires the initial datum to belong
to the domain, as we have in the present situation. It gives that \eqref{incl2} will hold for every
$\lambda\in (0,1)$ (for some $t_\lambda$ depending on $\lambda$)
once we have proved it for one level set $\lambda=\varepsilon\in (0,1)$.
Hence, we can choose $\lambda=\varepsilon$ as small as needed in  \eqref{incl2}.

Note that Corollary~\ref{BDR} gives the analogue of \eqref{incl2} with 
$e^{t/2}$ replaced by $e^{\sigma t}$ for every $\sigma<1/2$ (and some $\lambda=\varepsilon$
small enough). To prove \eqref{incl2} with $\sigma =1/2$ we need to be 
more precise and we use a subsolution
from Lemma~\ref{SUBSUP}.

Since $u(1,\cdot)>0$ is a positive continuous function in all of $\R$, it is larger than a small 
positive constant times
the characteristic function of the unit interval. Thus, \eqref{lowercharR} applied 
with initial time~$1$ gives
$$
u(t,x)\geq 4c \frac{1}{(t-1)\{1+(t-1)^{-2}x^2\}}\ \ \quad\text {for all }t>1,|x|>1,
$$
for some constant $c>0$ depending on $u_0$. Now, since $t-1\geq t/2$ for $t\geq 2$, we have that 
$u(t,x)\geq 4c /(t\{1+(t-1)^{-2}x^2\})\geq c /(t\{1+t^{-2}x^2\})$ for all $t\geq 2$ and $|x|>1$.
Therefore, for all $T\geq 2$ we have
\begin{equation}\label{baix}
u(T,x)\geq \frac{c}{T}\, \frac{1}{1+T^{-2}x^2}\quad\text{ for all }x\in\R,
\end{equation}
for some positive constant $c=c(T)$ (depending on $T$ and $u_0$) taken to be small enough
to guarantee \eqref{baix} also for $|x|\leq 1$.
Taking $c$ smaller if necessary, we may assume 
$$
c<T-1.
$$

{From} now on we fix one time $T\geq 2$ and the constant $c=c(T)$ in \eqref{baix}.
We could take $T=2$ for instance. We place a subsolution  
$u_{a,b_0}(0,\cdot)$ of Lemma \ref{SUBSUP} below $u(T,\cdot)$. Note here the difference of times, $0$
and $T$, for both functions.  
We simply take $a=\frac{c}{T}$ and $b_0=T$. Since $a=\frac{c}{T} <\frac{T-1}{T}=\frac{b_0-1}{b_0}$,
we have that
$u_{c/T,T}$ is a subsolution. Note that 
$$
u(T,x)\geq   \frac{c}{T}\, \frac{1}{1+T^{-2}x^2}=u_{c/T,T}(0,x) \quad\text{ for all }x\in\R
$$
thanks to \eqref{baix}. Thus, for $t\geq T$ and all $x\in\R$, we have
$$
u(t,x)\geq u_{c/T,T}(t-T,x)=\frac{c/T}{1+\frac{x^2}
{\{(1+T) e^{(t-T)/2}-1\}^2}}.
$$
Hence, if $|x|\leq e^{t/2}$ and $t$ is large enough, we have
$u(t,x)>\varepsilon$
for $t$ large enough, for some constant $\varepsilon>0$.
\end{proof}

\end{document}